\address{Department of Mathematics and Statistics\\
 Masaryk University, Brno, Czech Republic\\
}
\keywords{coherent category, pretopos, model structure, conceptual completeness, small object argument}
\newtheorem{theorem}{Theorem}
\newcommand\restr[2]{{
\left.
\kern-
\nulldelimiterspace 
#1 
\right|_{#2} 
}}
\tikzset{>=stealth}    
\begin{document}
\title{A (2,1)-model structure for conceptual completeness}
\author{Kristóf Kanalas}

\maketitle

\begin{abstract}
   We prove the (2,1)-categorical analogue of the small object argument and give a (2,1)-model structure on the category of small coherent categories, coherent functors and natural isomorphisms. It is induced by a higher dimensional example of a reflective factorisation system, determined by the full subcategory of pretoposes. We prove it to be right proper and the generating trivial cofibrations are described. Whitehead's theorem gives conceptual completeness.
\end{abstract}

\tableofcontents

\section{Introduction}

Coherent categories are categories with finite limits, pullback-stable image factorisations and pullback-stable unions. Their importance was established in \cite{makkai} (under the name "logical category") as the class of small coherent categories and coherent functors (the structure-preserving ones) can be identified with many-sorted coherent (also called positive) theories and interpretations, see Section 3 for an overview.

A central result in categorical logic, Makkai's conceptual completeness states that if a coherent functor $F:\mathcal{C}\to \mathcal{D}$ induces an equivalence $F^*:\mathbf{Coh}(\mathcal{D},\mathbf{Set})\to \mathbf{Coh}(\mathcal{C},\mathbf{Set})$ (i.e.~if it is a Morita-equivalence), and $\mathcal{C}$ is a pretopos (a coherent category with disjoint unions and quotients of equivalence relations) then $F$ is an equivalence.

Providing a Quillen model structure is a way of doing axiomatic homotopy theory in categories. It consists of three class of arrows on some bicomplete category: weak equivalences, fibrations and cofibrations, satisfying certain diagrammatic axioms. A number of classical results generalise to this setting, in particular Whitehead's theorem takes the form, that in a model category if $X$ and $Y$ are both bifibrant, a map $f:X\to Y$ is a weak equivalence iff it is a homotopy equivalence.

It is therefore natural to look for a model structure whose weak equivalences are the Morita-equivalences, the bifibrant objects are the pretoposes and between pretoposes two maps are homotopic iff they are naturally isomorphic, as in this case we would get conceptual completeness as an instance of Whitehead's theorem. However the 1-category of small coherent categories and coherent functors is neither complete nor cocomplete, but it is in the (2,1)-categorical sense (as it is proved in \cite{adjoint}). Hence we should look for a (2,1)-model structure with these properties (in the sense of \cite{infmodel}), whose existence will be proved in Section 4. 

Section 2 gives the proof of the (2,1)-categorical small object argument which will be used in the construction. It follows the standard 1-categorical proof (given e.g.~in \cite{hovey}), with naturally modified arguments for the higher categorical setting. It is not necessary for the pure existence of a model structure with the desired properties: since the full subcategory of pretoposes is known to be reflective in the (2,1)-sense, the generalisation of the theory of reflective factorisation systems would also suffice. However our approach has the advantage that it gives an explicit description for the generating cofibrations. Finally we will prove the model structure to be right proper.

I am grateful for the fruitful conversations with John Bourke, Jiří Rosický and P\'al Zs\'amboki.

\section{Small object argument for (2,1)-categories}

From now on let $\mathbf{C}$ denote a strict (2,1)-category, that is, a strict 2-category whose 2-cells are invertible. First we recall the notion of a (co)limit for (2,1)-categories.

\begin{definition}
Given a small 2-diagram $d_\bullet :\mathcal{I}\to \mathbf{C}$ (i.e.~a strict 2-functor of strict (2,1)-categories), its \emph{2-limit} is an object $d$ with a pseudonatural transformation $\pi :\Delta (d)\Rightarrow d_{\bullet }$ such that $\pi _*:\mathbf{C}(a,d)\to Nat(\Delta (a),d_{\bullet })$ is an equivalence of categories (groupoids in this case). (The codomain is the category of pseudonatural transformations from the constant $a$-valued functor to $d_{\bullet }$ with the modifications as the morphisms.)
\end{definition}

\begin{remark}
In elementary terms the 2-limit can be described as a cone

$ $\\
\adjustbox{scale=0.8,center}{
\begin{tikzcd}
	&&& d \\
	\\
	\\
	\\
	{d_i} &&&&&& {d_k} \\
	\\
	&&&& {d_j}
	\arrow["{f}"', from=5-1, to=7-5]
	\arrow["{g}"', from=7-5, to=5-7]
	\arrow["{h}"', from=5-1, to=5-7]
	\arrow["{p_i}"', from=1-4, to=5-1]
	\arrow["{p_j}", from=1-4, to=7-5]
	\arrow["{p_k}", from=1-4, to=5-7]
	\arrow["{\eta _f}"{description}, shift left=5, shorten <=34pt, shorten >=34pt, Rightarrow, from=5-1, to=7-5]
	\arrow["{\eta _g}"{description}, shift left=5, shorten <=17pt, shorten >=17pt, Rightarrow, from=7-5, to=5-7]
	\arrow["{\eta _h}"{description}, shift left=5, shorten <=53pt, shorten >=53pt, Rightarrow, from=5-1, to=5-7]
\end{tikzcd}
}
such that for each 2-cell $g\circ f \Rightarrow h$ in the diagram the above tetrahedron commutes. Moreover it has the following universal property: given another such cone $(e,(q_i)_{i\in \mathcal{I}_0},(\nu _f)_{f\in \mathcal{I}_1})$ there is a map $r:e\to d$, unique up to unique natural isomorphism such that there are 2-isomorphisms $\alpha _i : p_i r \Rightarrow q_i$ for which the composite of the 2-cells 

\[\begin{tikzcd}
	&& e \\
	\\
	&& d \\
	{d_i} &&&& {d_j}
	\arrow["r"{description}, from=1-3, to=3-3]
	\arrow[""{name=0, anchor=center, inner sep=0}, "{p_i}"{description}, from=3-3, to=4-1]
	\arrow[""{name=1, anchor=center, inner sep=0}, "{p_j}"{description}, from=3-3, to=4-5]
	\arrow[""{name=2, anchor=center, inner sep=0}, "{q_i}"{description}, curve={height=6pt}, from=1-3, to=4-1]
	\arrow[""{name=3, anchor=center, inner sep=0}, "{q_j}"{description}, curve={height=-6pt}, from=1-3, to=4-5]
	\arrow["f"{description}, from=4-1, to=4-5]
	\arrow["{\eta _f}"{description}, shorten <=13pt, shorten >=13pt, Rightarrow, from=0, to=1]
	\arrow["{\alpha _i}"', shorten <=6pt, shorten >=6pt, Rightarrow, from=3-3, to=2]
	\arrow["{\alpha _j}", shorten <=6pt, shorten >=6pt, Rightarrow, from=3-3, to=3]
\end{tikzcd}\]
is $\nu _f : f q_i \Rightarrow q_j$ (for each arrow $f$ of the diagram).

\end{remark}

In this section we will generalise the small object argument for locally small 2-cocomp\-lete strict (2,1)-categories. The proof follows the one given in \cite{hovey} for the 1-categorical setting.

\begin{definition}
Let $\lambda $ be an ordinal seen as a (2,1)-category with trivial 2-cells. Given a 2-colimit preserving diagram $\lambda \to \mathbf{C}$ with 2-colimit $\mathcal{X}$
\[\begin{tikzcd}
	&& {\mathcal{X}} \\
	{\mathcal{X}_0} & {\mathcal{X}_1} & {\mathcal{X}_2} & \dots
	\arrow["{f_0}"', from=2-1, to=2-2]
	\arrow["{f_1}"', from=2-2, to=2-3]
	\arrow["{f_2}"', from=2-3, to=2-4]
	\arrow["f", from=2-1, to=1-3]
	\arrow[from=2-2, to=1-3]
	\arrow[from=2-3, to=1-3]
\end{tikzcd}\]
the coprojection map $f:\mathcal{X}_0\to \mathcal{X}$ is called the \emph{transfinite composition} of the $\lambda $-sequence $(f_i)_{i<\lambda }$.
\end{definition}

\begin{definition}
Let $I\subset Arr(\mathbf{C})$ be a set. \emph{$I$-$cell$} is the class of maps that can be written as the transfinite composition of 2-pushouts from $I$. \emph{$I$-$inj$} is the class whose members ($f$) have the following right lifting property: given a square
\[\begin{tikzcd}
	{\mathcal{C}} && {\mathcal{X}} \\
	\\
	{\mathcal{D}} && {\mathcal{Y}}
	\arrow["g"', from=1-1, to=3-1]
	\arrow["{h'}", from=3-1, to=3-3]
	\arrow["f", from=1-3, to=3-3]
	\arrow["h", from=1-1, to=1-3]
	\arrow["\eta"{description}, shorten <=11pt, shorten >=11pt, Rightarrow, from=3-1, to=1-3]
\end{tikzcd}\]
with $g\in I$, there is a lifting
\[\begin{tikzcd}
	{\mathcal{C}} && {\mathcal{X}} \\
	\\
	{\mathcal{D}} && {\mathcal{Y}}
	\arrow["g"', from=1-1, to=3-1]
	\arrow[""{name=0, anchor=center, inner sep=0}, "{h'}", from=3-1, to=3-3]
	\arrow["f", from=1-3, to=3-3]
	\arrow[""{name=1, anchor=center, inner sep=0}, "h", from=1-1, to=1-3]
	\arrow["k"{description}, from=3-1, to=1-3]
	\arrow["{\nu_1}"{description}, shorten <=13pt, shorten >=9pt, Rightarrow, from=3-1, to=1]
	\arrow["{\nu_2}"{description}, shorten <=13pt, shorten >=9pt, Rightarrow, from=0, to=1-3]
\end{tikzcd}\]
such that the pasting of $\nu _1$ and $\nu _2$ is $\eta $, i.e.~$f\nu _1 \circ \nu _2g =\eta$.

\emph{$I$-$proj$} is the class whose members have the left lifting property wrt.~$I$. As usual we set \emph{$I$-$cof$}=($I$-$inj$)-$proj$, and \emph{$I$-$fib$}=($I$-$proj$)-$inj$.
\end{definition}


\begin{proposition}
$I$-cell $\subseteq I$-$cof$.
\end{proposition}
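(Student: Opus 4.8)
The plan is to mimic the classical closure argument of \cite{hovey}, localising every genuinely 2-categorical difficulty into the coherence bookkeeping of the 2-cells. Since $I$-cof $=(I$-$inj)$-proj is by construction a class of maps defined by a \emph{left lifting property}, the whole statement reduces to two facts: (i) $I\subseteq I$-cof, and (ii) for any fixed class $\mathcal{J}$ of arrows the class $\mathcal{J}$-proj is closed under 2-pushout and under transfinite composition. Indeed, every member of $I$-cell is by definition a transfinite composition of 2-pushouts of maps in $I$, so applying (ii) with $\mathcal{J}=I$-$inj$ and starting from (i) immediately yields $I$-cell $\subseteq I$-cof. The content is therefore entirely in establishing (i) and the two closure properties in the present setting.

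For (i), let $p\in I$-$inj$ and $g\in I$. By definition $p$ has the right lifting property against every member of $I$, in particular against $g$; but a filled square with $g$ on the left and $p$ on the right \emph{admitting a diagonal $k$ with $2$-cells $\nu_1,\nu_2$ satisfying $f\nu_1\circ\nu_2 g=\eta$} is literally the same assertion whether one reads it as the RLP of $p$ or the LLP of $g$. Hence $g$ lifts against every $p\in I$-$inj$, i.e.\ $g\in(I$-$inj)$-proj $=I$-cof. For closure under 2-pushout, suppose $g\in I$-cof and let $g'$ be a cobase change of $g$ along some $u$, with invertible comparison $2$-cell $\sigma$ coming from the 2-pushout square. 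Given a lifting problem $(s,t,\theta)$ for $g'$ against $p\in I$-$inj$, I would precompose the two legs with the pushout maps and paste in $\sigma$ to obtain a lifting problem for $g$; solving it (using $g\in I$-cof) produces a diagonal together with its pair of $2$-cells, and then the universal property of the 2-pushout glues this diagonal with $s$ into a map out of the pushout, the required $2$-cells being assembled from $\nu_1,\nu_2$ and $\sigma$.

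For closure under transfinite composition, let $f:\mathcal{X}_0\to\mathcal{X}$ be the transfinite composition of a $\lambda$-sequence $(f_i)_{i<\lambda}$ with each $f_i\in I$-cof, and let $(a,b,\eta)$ be a lifting problem against $p\in I$-$inj$. The idea is to build a cocone $(k_i:\mathcal{X}_i\to A)_{i<\lambda}$ over the $\lambda$-sequence by transfinite induction: put $k_0=a$; at a successor step solve the lifting problem furnished by $f_i\in I$-cof; at a limit step invoke the 2-colimit universal property of $\mathcal{X}_\gamma=\mathrm{colim}_{i<\gamma}\mathcal{X}_i$ (this is exactly where local 2-cocompleteness and the defining 2-colimit-preservation of the transfinite composite are used). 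The coprojection $\mathcal{X}=\mathrm{colim}_{i<\lambda}\mathcal{X}_i$ then yields a map $\mathcal{X}\to A$ whose precomposition with $f$ is the sought diagonal.

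The main obstacle, and the real difference from the $1$-categorical case, is coherence. There each $k_i$ is pinned down by an equation, whereas here every inductive step only produces data up to invertible $2$-cell and carries its own pair $(\nu_1^{\,i},\nu_2^{\,i})$; the genuine work is to organise these $2$-cells into a single pseudonatural cocone (a modification-compatible family) so that the $2$-colimit universal property of $\mathcal{X}$ actually applies and delivers both the lift and its comparison $2$-isomorphisms. In parallel one must check that the pasting condition $f\nu_1\circ\nu_2 g=\eta$ is preserved under 2-pushout and accumulates correctly along the transfinite composite. I expect verifying this coherence — rather than the existence of lifts stage by stage, which is routine — to be the delicate heart of the proof.
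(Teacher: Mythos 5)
Your proposal follows essentially the same route as the paper: reduce the statement to $I\subseteq I$-$cof$ together with closure of left-lifting-property classes under 2-pushouts and transfinite compositions, transfer each lifting problem across the pushout (resp.\ along the $\lambda$-sequence), and induce the diagonal by the universal property of the 2-pushout (resp.\ 2-colimit). The coherence verification you defer as "the delicate heart" is resolved in the paper by the two-dimensional uniqueness clause of these universal properties: both the pasted composite $m l_2$ and the given bottom map $k$ are cocones compatible with the same data, so the unique comparison 2-isomorphism between them supplies exactly the missing 2-cell of the splitting, and the pasting identity follows from the identities already established.
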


\begin{proof}
Clearly $I\subseteq I$-$cof$, hence it suffices to prove that $I$-$cof$ is closed under pushouts and transfinite compositions. First we show that given
\[\begin{tikzcd}
	\bullet && \bullet && \bullet \\
	\\
	\bullet && \bullet && \bullet
	\arrow[""{name=0, anchor=center, inner sep=0}, "f"', from=1-1, to=3-1]
	\arrow[""{name=1, anchor=center, inner sep=0}, "g", from=1-1, to=1-3]
	\arrow["{f'}"', from=1-3, to=3-3]
	\arrow["{g'}", from=3-1, to=3-3]
	\arrow["h", from=1-3, to=1-5]
	\arrow[""{name=2, anchor=center, inner sep=0}, "m", from=1-5, to=3-5]
	\arrow[""{name=3, anchor=center, inner sep=0}, "k", from=3-3, to=3-5]
	\arrow["\alpha", shorten <=7pt, shorten >=7pt, Rightarrow, from=1, to=0]
	\arrow["\beta", shorten <=7pt, shorten >=7pt, Rightarrow, from=2, to=3]
\end{tikzcd}\]
with the left square being a 2-pushout, there exists a lifting in the right square.

Using that $f$ has the left lifting property wrt.~$m$ we get
\[\begin{tikzcd}
	\bullet && \bullet && \bullet \\
	\\
	\bullet && \bullet && \bullet
	\arrow[""{name=0, anchor=center, inner sep=0}, "f"', from=1-1, to=3-1]
	\arrow["g", from=1-1, to=1-3]
	\arrow["{g'}", from=3-1, to=3-3]
	\arrow["h", from=1-3, to=1-5] 
	\arrow[""{name=1, anchor=center, inner sep=0}, "m", from=1-5, to=3-5]
	\arrow["k", from=3-3, to=3-5]
	\arrow["{l_1}", from=3-1, to=1-5]
	\arrow["\delta"', shorten <=11pt, shorten >=11pt, Rightarrow, from=1, to=3-3]
	\arrow["{\gamma }", shorten <=11pt, shorten >=11pt, Rightarrow, from=1-3, to=0]
\end{tikzcd}\]
such that the pasting of $\alpha $ and $\beta $ is the same as the pasting of $\gamma $ and $\delta $ (i.e.~$k\alpha \circ \beta g =\delta f \circ m\gamma$).

Using the universal property of the 2-pushout we get
\[\begin{tikzcd}
	\bullet && \bullet \\
	\\
	\bullet && \bullet \\
	&&& \bullet
	\arrow[""{name=0, anchor=center, inner sep=0}, "f"', from=1-1, to=3-1]
	\arrow[""{name=1, anchor=center, inner sep=0}, "g", from=1-1, to=1-3]
	\arrow["{f'}", from=1-3, to=3-3]
	\arrow["{g'}"', from=3-1, to=3-3]
	\arrow[""{name=2, anchor=center, inner sep=0}, "{l_1}"', curve={height=12pt}, from=3-1, to=4-4]
	\arrow[""{name=3, anchor=center, inner sep=0}, "h", curve={height=-12pt}, from=1-3, to=4-4]
	\arrow["{l_2}"{description}, dashed, from=3-3, to=4-4]
	\arrow["\alpha", shorten <=7pt, shorten >=7pt, Rightarrow, from=1, to=0]
	\arrow["\nu", shorten >=4pt, Rightarrow, from=3-3, to=2]
	\arrow["\eta", shorten <=4pt, Rightarrow, from=3, to=3-3]
\end{tikzcd}\]
such that the pasting of $\alpha $, $\nu $ and $\eta $ is $\gamma $.

We should prove that $l_2$ is a splitting of $\beta $. It is enough to see that in

\[\begin{tikzcd}
	\bullet && \bullet &&& \bullet && \bullet \\
	&&&&&&&&& \bullet \\
	\bullet && \bullet &&& \bullet && \bullet \\
	&&& \bullet &&&& \bullet && \bullet \\
	&&&& \bullet
	\arrow[""{name=0, anchor=center, inner sep=0}, "f"', from=1-1, to=3-1]
	\arrow[""{name=1, anchor=center, inner sep=0}, "g", from=1-1, to=1-3]
	\arrow["{f'}", from=1-3, to=3-3]
	\arrow["{g'}"', from=3-1, to=3-3]
	\arrow[""{name=2, anchor=center, inner sep=0}, "{l_1}"', curve={height=12pt}, from=3-1, to=4-4]
	\arrow[""{name=3, anchor=center, inner sep=0}, "h", curve={height=-12pt}, from=1-3, to=4-4]
	\arrow["{l_2}"{description}, dashed, from=3-3, to=4-4]
	\arrow["m", from=4-4, to=5-5]
	\arrow["{g'}", from=3-6, to=3-8]
	\arrow["{f'}", from=1-8, to=3-8]
	\arrow[""{name=4, anchor=center, inner sep=0}, "f"', from=1-6, to=3-6]
	\arrow[""{name=5, anchor=center, inner sep=0}, "g", from=1-6, to=1-8]
	\arrow["k", from=3-8, to=4-10]
	\arrow["h", from=1-8, to=2-10]
	\arrow["m", from=2-10, to=4-10]
	\arrow["{l_1}"', from=3-6, to=4-8]
	\arrow["m"', from=4-8, to=4-10]
	\arrow["\beta", shorten <=10pt, shorten >=10pt, Rightarrow, from=2-10, to=3-8]
	\arrow["\delta", Rightarrow, from=3-8, to=4-8]
	\arrow["\alpha", shorten <=7pt, shorten >=7pt, Rightarrow, from=1, to=0]
	\arrow["\nu", shorten >=4pt, Rightarrow, from=3-3, to=2]
	\arrow["\eta", shorten <=4pt, Rightarrow, from=3, to=3-3]
	\arrow["\alpha", shorten <=7pt, shorten >=7pt, Rightarrow, from=5, to=4]
\end{tikzcd}\]
the 2-cells filling the boundaries are identical as in this case both $ml_2$ and $k$ are suitable splittings, hence there is a unique natural isomorphism $\varepsilon:ml_2 \Rightarrow k$ for which $\varepsilon f' \circ m\eta =\beta $ and $m\nu \circ \varepsilon ^{-1}g' =\delta$ (and the first equality means that $l_2$ is a splitting). The equality of the 2-cells follows from the identities observed above.

Now assume that each $f_i$ ($i<\lambda $) has left lifting property wrt.~$m$ (and that $f_i$-s form a (co)continuous sequence). We have to prove that its transfinite composition $f$ has the same lifting property. We have a lift in 
\[\begin{tikzcd}
	\bullet &&& \bullet \\
	\\
	\bullet && \bullet & \bullet
	\arrow["{f_0}"', from=1-1, to=3-1]
	\arrow["f"{pos=0.3}, from=1-1, to=3-3]
	\arrow[from=3-1, to=3-3]
	\arrow["h", from=1-1, to=1-4]
	\arrow["k", from=3-3, to=3-4]
	\arrow["m", from=1-4, to=3-4]
	\arrow["{h_1}"{description, pos=0.7}, dashed, from=3-1, to=1-4]
\end{tikzcd}\]
and this way one defines $h_i$ for each successor ordinal $i<\lambda $. At limit steps $h_i$ is induced by the universal property of the 2-colimit of the sequence. Compatibility of the 2-cells is checked as before.
\end{proof}

It is worth to spell out explicitly:

\begin{proposition}
Left lifting properties are preserved by 2-pushouts and transfinite compositions. Dually, right lifting properties are preserved by 2-pullbacks and transfinite cocompositions (2-limit of the reversed sequence). In particular (taking $\lambda =2$) $I$-$inj$ and $I$-$proj$ are subcategories.
 
\label{lifting}
\end{proposition}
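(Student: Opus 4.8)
The plan is to notice that the two preservation statements for left lifting properties are already contained in the proof of the previous Proposition, read with a fixed but arbitrary map $m$ on the right-hand side. Writing $S_m$ for the class of arrows having the left lifting property with respect to $m$, the first half of that proof shows precisely that the 2-pushout $g'$ of any $f\in S_m$ again lies in $S_m$, and the second half shows that a transfinite composition of maps in $S_m$ lies in $S_m$; neither argument used anything about $m$ beyond the lifting property of $f$ (resp.~of the $f_i$). Since $I$-$proj=\bigcap_{m\in I}S_m$ and the 2-pushout (resp.~transfinite composite) of a given map is one fixed arrow belonging to every $S_m$ simultaneously, the intersection inherits both closure properties. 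Thus the first sentence of the Proposition needs no new computation: it is just the explicit statement of what the lift-construction, the universal property of the 2-pushout, and the 2-colimit at limit stages already gave.

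For the dual half I would pass to the opposite (2,1)-category $\mathbf{C}^{op}$, reversing 1-cells only; as every 2-cell is invertible this is again a strict (2,1)-category. Under this duality a 2-pullback in $\mathbf{C}$ is a 2-pushout in $\mathbf{C}^{op}$, a transfinite cocomposition (the 2-limit of the reversed sequence) becomes an ordinary transfinite composition, and ``$f$ has the right lifting property with respect to $g$'' becomes ``$f^{op}$ has the left lifting property with respect to $g^{op}$''. The statement to be proved in $\mathbf{C}$ then reads off as the already-proved left-hand statement in $\mathbf{C}^{op}$. The only external input needed is that the relevant 2-colimits exist in $\mathbf{C}^{op}$; but these are exactly the 2-pullbacks and transfinite cocompositions presupposed in the statement, which exist in $\mathbf{C}$ by hypothesis, so no global 2-(co)completeness assumption is required.

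Finally, for the ``in particular'' clause I would specialise the transfinite-composition statement to the two-step sequence ($\lambda=2$), which exhibits an ordinary composite as a transfinite composition and so yields closure of $I$-$proj$ under composition; dually $I$-$inj$ is closed under composition. Together with the trivial fact that every identity has both lifting properties (take the lift to be the given top or bottom edge, with one of the two witnessing 2-cells an identity and the other the given filler $\eta$), this shows that $I$-$inj$ and $I$-$proj$ are wide subcategories of $\mathbf{C}$.

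I expect the only genuinely delicate point to be the bookkeeping of 2-cells under duality: one must check that the invertible filler $\eta$ of the lifting square, the two witnessing 2-cells $\nu_1,\nu_2$, and the pasting equation $f\nu_1\circ\nu_2 g=\eta$ all dualise on the nose, so that ``RLP in $\mathbf{C}$'' coincides with ``LLP in $\mathbf{C}^{op}$'' exactly, and not merely up to some unaccounted coherence isomorphism. Everything else is a direct appeal to the preceding proof.
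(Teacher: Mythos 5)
Your proposal is correct and matches the paper's own treatment: the paper offers no separate proof, prefacing the proposition with ``It is worth to spell out explicitly'' precisely because the preceding proof (of $I$-cell $\subseteq I$-$cof$) already establishes both preservation statements for an arbitrary fixed $m$, with the right-lifting half obtained by the same duality through $\mathbf{C}^{op}$ and the ``in particular'' clause by the same specialisation to $\lambda=2$ together with the trivial lifting property of identities. Your extra care about the existence of the relevant 2-(co)limits in $\mathbf{C}^{op}$ and the 2-cell bookkeeping under duality is sound but not a departure from the paper's route.
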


\begin{proposition}
$I$-cell is closed under transfinite composition.
\end{proposition}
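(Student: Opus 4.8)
The plan is to show that whenever $(g_\beta)_{\beta<\lambda}$ is a $\lambda$-sequence each of whose members lies in $I$-cell, its transfinite composite $g$ is again in $I$-cell, by concatenating the defining cellular presentations of the $g_\beta$ into a single transfinite sequence of $2$-pushouts from $I$. First I would fix, for each $\beta<\lambda$, a continuous $\mu_\beta$-sequence $(h^\beta_\gamma)_{\gamma<\mu_\beta}$ of $2$-pushouts of maps from $I$ with intermediate objects $W^\beta_\gamma$, such that $W^\beta_0=\mathcal{Y}_\beta$ and the transfinite composite $W^\beta_0\to\mathcal{Y}_{\beta+1}$ is $g_\beta$ (here $(\mathcal{Y}_\beta)_{\beta\le\lambda}$ denotes the outer $\lambda$-sequence with $2$-colimit $\mathcal{Y}=\mathcal{Y}_\lambda$ and composite $g$).

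Next I would form the ordinal sum $\mu=\sum_{\beta<\lambda}\mu_\beta$ and order the set of pairs $\{(\beta,\gamma):\gamma<\mu_\beta\}$ lexicographically, identifying it with $\mu$; a typical $\delta<\mu$ is written uniquely as $\delta=\big(\sum_{\beta'<\beta}\mu_{\beta'}\big)+\gamma$ with $\gamma<\mu_\beta$. I would then define a single $\mu$-sequence $(\tilde f_\delta)_{\delta<\mu}$ with objects $Z_\delta=W^\beta_\gamma$, letting $\tilde f_\delta$ be the $2$-pushout $h^\beta_\gamma$ inside block $\beta$. At a block boundary $\delta=\sum_{\beta'<\beta}\mu_{\beta'}$ with $\beta$ a successor we have $W^{\beta-1}_{\mu_{\beta-1}}=\mathcal{Y}_\beta=W^\beta_0$, so the blocks glue strictly and every $\tilde f_\delta$ is genuinely a $2$-pushout of a map from $I$.

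The crux is to check that the spliced sequence is continuous, i.e. that for each limit $\delta<\mu$ the object $Z_\delta$ is the $2$-colimit of $(Z_{\delta'})_{\delta'<\delta}$, and that its transfinite composite is $g$. For a limit $\delta$ interior to a single block this is immediate from continuity of that block's sequence. The genuinely new case is a block boundary $\delta=\sum_{\beta'<\beta}\mu_{\beta'}$ with $\beta$ a limit ordinal, where $Z_\delta=\mathcal{Y}_\beta=\operatorname{colim}_{\beta'<\beta}\mathcal{Y}_{\beta'}$ by continuity of the outer sequence. I would finish by observing that the block-start indices $\{\sum_{\beta''<\beta'}\mu_{\beta''}:\beta'<\beta\}$ form a cofinal sub-well-order of $\delta$ (given $\delta'$ in block $\beta_0$, the start of block $\beta_0+1$ exceeds $\delta'$ and, since $\beta$ is a limit, still lies below $\delta$), and that the induced subdiagram sends these indices precisely to $(\mathcal{Y}_{\beta'},g_{\beta'})_{\beta'<\beta}$. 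Invoking the $(2,1)$-categorical fact that a $2$-colimit may be computed over any cofinal subdiagram then identifies $\operatorname{colim}_{\delta'<\delta}Z_{\delta'}$ with $\mathcal{Y}_\beta$; the same cofinality applied at the top identifies the composite of $(\tilde f_\delta)$ with $g$, so $g\in I\text{-cell}$.

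I expect the main obstacle to be this cofinality/interchange step: in the strict $2$-categorical setting one must verify that the comparison pseudonatural transformation between the two $2$-colimit presentations is an equivalence and that the structural $2$-cells of the blocks paste coherently across the boundaries, rather than merely matching objects and $1$-cells as in the $1$-categorical argument of \cite{hovey}. The remaining bookkeeping — strict gluing at successor boundaries and continuity inside each block — is routine.
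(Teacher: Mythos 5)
Your argument is correct and is essentially the paper's own proof, spelled out in full: the paper simply states that ``the transfinite composition of transfinite compositions is a transfinite composition,'' reducing (as you do) to the fact that a sequential 2-diagram's colimit can be computed over any cofinal subsequence, and cites Proposition 4.1.1.8 of Lurie for that cofinality step. Your explicit ordinal-sum splicing and block-boundary analysis is exactly the bookkeeping the paper leaves implicit, and the cofinality fact you flag as the crux is precisely what the cited result supplies.
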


\begin{proof}
We need to prove that "the transfinite composition of transfinite compositions is a transfinite composition", i.e.~that if we have a sequential (2-)diagram then its colimit can be computed as the colimit of any cofinal subsequence. This is Proposition 4.1.1.8. in \cite{lurietopos}.
\end{proof}

\begin{proposition}
The homotopy pushout of a coproduct of maps from $I$ is in $I$-cell.
\end{proposition}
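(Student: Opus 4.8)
The plan is to realise the homotopy (that is, 2-)pushout of the coproduct $\coprod_{s\in S}g_s$, where each $g_s\in I$, as a transfinite composition of 2-pushouts of the individual maps $g_s$; by the very definition of $I$-cell this gives membership. Write $g=\coprod_s g_s:\coprod_s\mathcal{C}_s\to\coprod_s\mathcal{D}_s$ for the coproduct map and suppose we are given a 2-pushout with attaching map $h:\coprod_s\mathcal{C}_s\to\mathcal{X}$ and resulting coprojection $f:\mathcal{X}\to\mathcal{Y}$. The goal is to show $f\in I$-cell.

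First I would well-order $S$, identifying it with an ordinal $\mu$. For every ordinal $\alpha\le\mu$ I set $\mathcal{X}_\alpha$ to be the 2-pushout of $\coprod_{s<\alpha}g_s$ along the restricted attaching map $\coprod_{s<\alpha}\mathcal{C}_s\to\coprod_s\mathcal{C}_s\xrightarrow{h}\mathcal{X}$. Since the empty coproduct is the initial object we get $\mathcal{X}_0\simeq\mathcal{X}$, while $\mathcal{X}_\mu\simeq\mathcal{Y}$ is the full 2-pushout, and the comparison maps $\mathcal{X}_\alpha\to\mathcal{X}_\beta$ (for $\alpha\le\beta$) assemble into a $(\mu+1)$-sequence whose coprojection out of $\mathcal{X}_0$ is (equivalent to) $f$.

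The substance is in the successor and limit steps. At a successor $\alpha+1$ I would check that $\mathcal{X}_\alpha\to\mathcal{X}_{\alpha+1}$ is a 2-pushout of the single map $g_{s_\alpha}\in I$: writing $\coprod_{s<\alpha+1}\mathcal{C}_s\simeq\bigl(\coprod_{s<\alpha}\mathcal{C}_s\bigr)\sqcup\mathcal{C}_{s_\alpha}$ and likewise for the $\mathcal{D}_s$, the pasting law for 2-pushouts lets the 2-pushout over $s<\alpha+1$ be computed in two stages, first over $s<\alpha$ (producing $\mathcal{X}_\alpha$) and then by pushing out $g_{s_\alpha}$ along the induced map $\mathcal{C}_{s_\alpha}\to\mathcal{X}_\alpha$. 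At a limit ordinal $\beta$ I would show $\mathcal{X}_\beta$ is the 2-colimit of $(\mathcal{X}_\alpha)_{\alpha<\beta}$: here $\coprod_{s<\beta}\mathcal{C}_s$ is the directed 2-colimit of the $\coprod_{s<\alpha}\mathcal{C}_s$, and since 2-colimits commute with 2-colimits in the 2-cocomplete $\mathbf{C}$, the 2-pushout defining $\mathcal{X}_\beta$ is the directed 2-colimit of the 2-pushouts defining the $\mathcal{X}_\alpha$. Both reductions are instances of the associativity/interchange of 2-colimits.

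Granting the two steps, $f:\mathcal{X}_0\to\mathcal{X}_\mu$ is by construction a transfinite composition of 2-pushouts of maps from $I$, so $f\in I$-cell. The step I expect to be the main obstacle is the coherence bookkeeping: each of the decompositions above must be verified not merely on underlying objects but for the mediating structural 2-isomorphisms, so that the canonical comparison maps are genuine equivalences and the associated 2-cells paste to the identities demanded by the 2-colimit universal property (the tetrahedron/compatibility condition recorded in the elementary description of 2-limits above). The invertibility of every 2-cell in the $(2,1)$-setting keeps this in check, but the comparison equivalences and their 2-cells have to be produced and matched explicitly rather than read off from the 1-categorical argument.
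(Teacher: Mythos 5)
Your proof is correct, and it rests on the same underlying idea as the paper's — well-order the index set and exhibit the 2-pushout of the coproduct as a transfinite composition of 2-pushouts of the individual maps $g_s$ — but the two arguments organize the verification in genuinely different ways. The paper builds the tower from the bottom up: $\mathcal{X}_{j+1}$ is \emph{defined} as the 2-pushout of $g_j$ along the composite $\mathcal{C}_j \to \mathcal{X} \xrightarrow{i_{0,j}} \mathcal{X}_j$, so membership of the transfinite composite in $I$-cell is immediate by construction, and all the work is concentrated in one final step: checking that $\mathcal{X}\to\mathcal{X}_\lambda$ satisfies the universal property of the 2-pushout of $\cup g_j$ along $h_0$, which the paper does by producing explicit 2-cells on the surface of a commutative three-dimensional diagram. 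You instead define each stage $\mathcal{X}_\alpha$ from the top down, as the 2-pushout of the partial coproduct $\coprod_{s<\alpha}g_s$, so that $\mathcal{X}_\mu\simeq\mathcal{Y}$ is automatic; the work then migrates to identifying each consecutive comparison map as a single pushout of $g_{s_\alpha}$ (pasting law for 2-pushouts) and each limit stage as the 2-colimit of the preceding ones (commutation of 2-colimits with 2-colimits). What your route buys is modularity: the coherence bookkeeping you rightly flag at the end is absorbed into two general, citable facts about weighted colimits rather than checked by hand on an explicit diagram, though those facts do need to be available in the $(2,1)$-setting. What the paper's route buys is that neither the pasting law nor Fubini is invoked; the entire coherence burden sits in a single universal-property verification. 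Note that both arguments conclude only that $f$ is \emph{equivalent} to a transfinite composition, so both implicitly use that being a 2-pushout of a fixed map, and hence membership in $I$-cell, is invariant under isomorphism of maps — this transfer is harmless because the 2-categorical universal property is itself isomorphism-invariant, but it is worth stating once.
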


\begin{proof}
Let $g_j$ $(j\in J)$ be a family of arrows from $I$. Their coproduct is the induced map:
\[\begin{tikzcd}
	{\mathcal{C}_j} && {\cup _{j\in J} \mathcal{C}_j} \\
	\\
	{\mathcal{D}_j} && {\cup _{j\in J}\mathcal{D}_j}
	\arrow["{g_j}", from=1-1, to=3-1]
	\arrow[from=3-1, to=3-3]
	\arrow[from=1-1, to=1-3]
	\arrow["{\cup g_j}", dashed, from=1-3, to=3-3]
	\arrow["{\chi _j}"{description}, shorten <=11pt, shorten >=11pt, Rightarrow, from=1-3, to=3-1]
\end{tikzcd}\]
Now take the 2-pushout:
\[\begin{tikzcd}
	{\cup _j\mathcal{C}_j} && {\mathcal{X}} \\
	\\
	{\cup _j\mathcal{D}_j} && {\mathcal{Y}}
	\arrow["{\cup g_j}"', from=1-1, to=3-1]
	\arrow["{h_1}", from=3-1, to=3-3]
	\arrow["f", from=1-3, to=3-3]
	\arrow["{h_0}", from=1-1, to=1-3]
	\arrow["\eta"{description}, shorten <=11pt, shorten >=11pt, Rightarrow, from=1-3, to=3-1]
\end{tikzcd}\]
We will proceed by transfinite recursion and take: $X_0=X$, $\rho _0=f$ and $i_{0,0}=1_X$. In the successor step we form the 2-pushout of $g_j:\mathcal{C}_j\to \mathcal{D}_j$ and $\mathcal{C}_j\to \cup \mathcal{C}_j \xrightarrow{h_0} \mathcal{X} \xrightarrow{i_{0,j}}\mathcal{X}_j$ to get $X_{j+1}$ and induce $\rho _{j+1}$ by the universal property of the square. Hence we get a commutative cube (where the faces are filled with the obvious 2-cells):
\[\begin{tikzcd}
	{\mathcal{C}_j} &&&& {\mathcal{X}_j} \\
	& {\cup \mathcal{C}_j} & {\mathcal{X}} & {\mathcal{X}_j} \\
	\\
	& {\cup \mathcal{D}_j} && {\mathcal{Y}} \\
	{\mathcal{D}_j} &&&& {\mathcal{X}_{j+1}}
	\arrow["{h_0}", from=2-2, to=2-3]
	\arrow["{i_{0,j}}", from=2-3, to=2-4]
	\arrow["{\rho _j}"{description}, from=2-4, to=4-4]
	\arrow["{\cup g_j}"{description}, from=2-2, to=4-2]
	\arrow["{h_1}"{description}, from=4-2, to=4-4]
	\arrow[from=1-1, to=2-2]
	\arrow[Rightarrow, no head, from=1-5, to=2-4]
	\arrow[from=5-1, to=4-2]
	\arrow[dashed, from=5-5, to=4-4]
	\arrow[from=1-5, to=5-5]
	\arrow[from=1-1, to=1-5]
	\arrow["{g_j}"{description}, from=1-1, to=5-1]
	\arrow[from=5-1, to=5-5]
	\arrow["f"{description}, from=2-3, to=4-4]
\end{tikzcd}\]
When $j$ is a limit ordinal $\mathcal{X}_j$ is given by the transfinite composition
\[\begin{tikzcd}
	&&& {\mathcal{Y}} \\
	&&& {\mathcal{X}_j} \\
	{\mathcal{X}_0} && {\mathcal{X}_1} && \dots
	\arrow["{i_{0,1}}"', from=3-1, to=3-3]
	\arrow["{i_{1,2}}"', from=3-3, to=3-5]
	\arrow["{i_{0,j}}"{description}, from=3-1, to=2-4]
	\arrow["{i_{1,j}}"'{pos=0.7}, from=3-3, to=2-4]
	\arrow[dashed, from=2-4, to=1-4]
	\arrow["{\rho_1}"{description}, curve={height=-6pt}, from=3-3, to=1-4]
	\arrow["{\rho _0}"{description}, curve={height=-6pt}, from=3-1, to=1-4]
\end{tikzcd}\]
(the 3-cells are filled). We claim that with $\lambda =|J|$ the map $\mathcal{X}\to \mathcal{X}_{\lambda}$ is also a homotopy pushout for $\cup g_j$ along $h_0$. To see this we should find some 2-cells for

\[\begin{tikzcd}
	{\cup \mathcal{C}_j} &&& {\mathcal{X}} \\
	\\
	{\cup \mathcal{D}_j} &&& {\mathcal{X}_\lambda} \\
	&&&& {\mathcal{Y}}
	\arrow["{h_0}"{description}, from=1-1, to=1-4]
	\arrow["{i_{0,\lambda}}"{description}, from=1-4, to=3-4]
	\arrow["{\cup g_j}"{description}, from=1-1, to=3-1]
	\arrow["{\small{\cup \{\mathcal{D}_j\to \mathcal{X}_{j+1}\to \mathcal{X}_\lambda\}}}", from=3-1, to=3-4]
	\arrow["{\rho _{\lambda}}", from=3-4, to=4-5]
	\arrow["f"{description}, curve={height=-12pt}, from=1-4, to=4-5]
	\arrow["{h_1}"{description}, curve={height=12pt}, from=3-1, to=4-5]
\end{tikzcd}\]
whose composite is $\eta$. They can be found on the surface of the commutative 3-simplicial set

\[\begin{tikzcd}
	{\mathcal{C}_j} &&&& {\mathcal{X}_j} \\
	& {\cup \mathcal{C}_j} & {\mathcal{X}} & {\mathcal{X}_j} \\
	\\
	& {\cup \mathcal{D}_j} && {\mathcal{Y}} \\
	{\mathcal{D}_j} &&&& {\mathcal{X}_{j+1}} \\
	& {\cup\mathcal{D}_j} && {\mathcal{X}_{\lambda}}
	\arrow["{h_0}", from=2-2, to=2-3]
	\arrow["{i_{0,j}}", from=2-3, to=2-4]
	\arrow["{\rho _j}"{description}, from=2-4, to=4-4]
	\arrow["{\cup g_j}"{description}, from=2-2, to=4-2]
	\arrow["{h_1}"{description}, from=4-2, to=4-4]
	\arrow[from=1-1, to=2-2]
	\arrow[Rightarrow, no head, from=1-5, to=2-4]
	\arrow[from=5-1, to=4-2]
	\arrow[dashed, from=5-5, to=4-4]
	\arrow[from=1-5, to=5-5]
	\arrow[from=1-1, to=1-5]
	\arrow["{g_j}"{description}, from=1-1, to=5-1]
	\arrow[from=5-1, to=5-5]
	\arrow["f"{description}, from=2-3, to=4-4]
	\arrow[Rightarrow, no head, from=4-2, to=6-2]
	\arrow[from=5-1, to=6-2]
	\arrow["{\rho _\lambda}"{description, pos=0.3}, from=6-4, to=4-4]
	\arrow[from=5-5, to=6-4]
	\arrow[from=6-2, to=6-4]
\end{tikzcd}\]
\end{proof}

\begin{definition}
An object $\mathcal{X}$ of $\mathbf{C}$ is \emph{$\lambda $-small} wrt.~a subcategory $J$ if $\mathbf{C}(\mathcal{X},-)$ commutes with $\lambda $-filtered sequential 2-colimits formed in $J$. $\mathcal{X}$ is small if it is $\lambda $-small for some $\lambda $.
\end{definition}

\begin{theorem}[Small object argument]
Let $I\subset Arr(\mathbf{C})$ be a set, and assume that domains of $I$ are small relative to $I$-cell. Then for any map $f:\mathcal{X}\to \mathcal{Y}$ there are arrows $\mathcal{X}\xrightarrow{f'}\mathcal{Z}\xrightarrow{f''}\mathcal{Y}$ such that $f'\in I$-cell, $f''\in I$-$inj$ and $f'' \circ f'$ is isomorphic to $f$.
\label{smallob}
\end{theorem}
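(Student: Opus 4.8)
The plan is to follow the transfinite construction of the $1$-categorical argument in \cite{hovey}, replacing pushouts by 2-pushouts and colimits by 2-colimits, and to carry the coherence 2-cells along at every stage. Since $I$ is a set, each domain of a map in $I$ is $\kappa$-small relative to $I$-cell for some $\kappa$, so I would fix a regular cardinal $\lambda$ exceeding all these ranks and build a $\lambda$-sequence $(\mathcal{Z}_\beta)_{\beta\le \lambda}$ together with maps $g_\beta:\mathcal{Z}_\beta\to \mathcal{Y}$. Start with $\mathcal{Z}_0=\mathcal{X}$ and $g_0=f$. At a successor step let $S_\beta$ be the set of all squares $(g,h,h',\eta)$ with $g:\mathcal{C}_g\to \mathcal{D}_g$ in $I$, $h:\mathcal{C}_g\to \mathcal{Z}_\beta$, $h':\mathcal{D}_g\to \mathcal{Y}$ and $\eta$ a filling 2-cell against $g_\beta$. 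Form the coproduct $\cup_{s\in S_\beta} g_s$, whose domain maps to $\mathcal{Z}_\beta$ via $\cup_s h_s$, and let $\mathcal{Z}_{\beta+1}$ be the 2-pushout; the universal property of the 2-pushout applied to $g_\beta$ and $\cup_s h'_s$ with the $\eta_s$ induces $g_{\beta+1}:\mathcal{Z}_{\beta+1}\to \mathcal{Y}$ together with a coherence 2-iso $g_{\beta+1}\,i_{\beta,\beta+1}\Rightarrow g_\beta$. At limit ordinals take $\mathcal{Z}_\beta$ to be the 2-colimit of the preceding sequence with $g_\beta$ induced by its universal property. Finally set $\mathcal{Z}=\mathcal{Z}_\lambda$, let $f'$ be the transfinite composition $\mathcal{Z}_0\to \mathcal{Z}$ and $f''=g_\lambda$.

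That $f'\in I$-cell is immediate: each successor transition $i_{\beta,\beta+1}$ is a 2-pushout of a coproduct of maps from $I$, hence lies in $I$-cell by the preceding propositions, and $I$-cell is closed under transfinite composition. The isomorphism $f''\circ f'\cong f$ follows by composing the coherence 2-isos $g_{\beta+1}\,i_{\beta,\beta+1}\Rightarrow g_\beta$ along the sequence down to $g_0=f$ (at limit stages using the induced cells).

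The substance is showing $f''\in I$-inj. Given a lifting problem $(g,h,h',\eta)$ with $g:\mathcal{C}\to \mathcal{D}$ in $I$ against $f''$, I would invoke that $\mathcal{C}$ is $\lambda$-small relative to $I$-cell: the transition maps of $(\mathcal{Z}_\beta)_{\beta<\lambda}$ lie in $I$-cell and $\lambda$ is regular, so the sequence is $\lambda$-filtered and $\mathbf{C}(\mathcal{C},-)$ carries its 2-colimit to an equivalence of groupoids $\mathrm{colim}_\beta \mathbf{C}(\mathcal{C},\mathcal{Z}_\beta)\xrightarrow{\sim}\mathbf{C}(\mathcal{C},\mathcal{Z})$. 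Essential surjectivity yields a stage $\beta$, a map $\tilde h:\mathcal{C}\to \mathcal{Z}_\beta$ and a 2-iso $h\Rightarrow i_{\beta,\lambda}\tilde h$. Pasting this 2-iso with $\eta$ and with the coherence cell $g_\lambda\, i_{\beta,\lambda}\Rightarrow g_\beta$ converts the problem into a square against $g_\beta$, i.e.\ an element $s\in S_\beta$. By construction the coprojection $\mathcal{D}=\mathcal{D}_s\to \cup_s\mathcal{D}_s\to \mathcal{Z}_{\beta+1}$ solves $s$ at stage $\beta+1$, and composing with $i_{\beta+1,\lambda}$ produces the required lift $k:\mathcal{D}\to \mathcal{Z}$.

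The delicate part, and where I expect the real work to lie, is the coherence bookkeeping demanded by the definition of $I$-inj: one must verify that the 2-cells $\nu_1,\nu_2$ attached to $k$ paste to exactly the given $\eta$, i.e.\ $f''\nu_1\circ \nu_2 g=\eta$. This means assembling the defining 2-cell of the 2-pushout, the factorization 2-iso $h\Rightarrow i_{\beta,\lambda}\tilde h$, and the coherence isos $g_{\beta+1}\,i_{\beta,\beta+1}\Rightarrow g_\beta$ into a single pasting diagram and checking, through the uniqueness-up-to-unique-2-iso clauses of the 2-dimensional universal properties, that the composite is $\eta$; the same uniqueness is what makes $g_{\beta+1}$ and the limit-stage maps well-defined. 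I would also use fullness and faithfulness of the smallness equivalence to ensure that $\tilde h$ and its comparison 2-cell are essentially unique, so that no incompatible choices accumulate along the transfinite process.
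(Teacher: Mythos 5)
Your proposal is correct and follows essentially the same route as the paper: the same transfinite tower built by collecting all lifting squares at each successor stage, forming the 2-pushout of the coproduct, inducing the map to $\mathcal{Y}$ with its coherence 2-cell (the paper's observation (*)), taking 2-colimits at limit stages, and then using $\lambda$-smallness of the domains to factor a lifting problem through a stage $\mathcal{Z}_\beta$ so that the coprojection through $\mathcal{Z}_{\beta+1}$ supplies the lift, with the pasting identity $f''\nu_1\circ\nu_2 g=\eta$ checked via the uniqueness clauses of the 2-dimensional universal properties. Your explicit insistence on regularity of $\lambda$ is a point of care the paper glosses over, but it is not a different argument.
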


\begin{proof}
We proceed by transfinite recursion and take $\mathcal{Z}_0=\mathcal{X}$, $\rho _0 =f$ and $i_{0,0}=1_{\mathcal{X}}$.

For a successor ordinal $j+1$ collect all squares

\[\begin{tikzcd}
	{\mathcal{A}_s} && {\mathcal{Z}_j} \\
	\\
	{\mathcal{B}_s} && {\mathcal{Y}}
	\arrow["{g_s}"{description}, from=1-1, to=3-1]
	\arrow["{h_s}"{description}, from=1-1, to=1-3]
	\arrow["{\rho _j}"{description}, from=1-3, to=3-3]
	\arrow["{k_s}"{description}, from=3-1, to=3-3]
	\arrow["{\eta _s}"{description}, shorten <=11pt, shorten >=11pt, Rightarrow, from=1-3, to=3-1]
\end{tikzcd}\]
with $g_s\in I$ to an $S$-indexed set, then form the 2-pushout of $\sqcup g_s$ and $\sqcup h_s$ and induce $\rho _{j+1}$:

\[\begin{tikzcd}
	{\sqcup \mathcal{A}_s} && {\mathcal{Z}_j} \\
	\\
	{\sqcup \mathcal{B}_s} && {\mathcal{Z}_{j+1}} \\
	&&& {\mathcal{Y}}
	\arrow["{\sqcup h_s}", from=1-1, to=1-3]
	\arrow["{\sqcup g_s}"', from=1-1, to=3-1]
	\arrow[from=3-1, to=3-3]
	\arrow["{i_{j,j+1}}"', from=1-3, to=3-3]
	\arrow[""{name=0, anchor=center, inner sep=0}, "{\rho _j}", curve={height=-12pt}, from=1-3, to=4-4]
	\arrow[""{name=1, anchor=center, inner sep=0}, "{\sqcup k_s}"', curve={height=12pt}, from=3-1, to=4-4]
	\arrow["{\rho _{j+1}}", dashed, from=3-3, to=4-4]
	\arrow[shorten <=17pt, shorten >=17pt, Rightarrow, from=1-3, to=3-1]
	\arrow[shorten <=4pt, shorten >=2pt, Rightarrow, from=0, to=3-3]
	\arrow[shorten <=2pt, shorten >=3pt, Rightarrow, from=3-3, to=1]
\end{tikzcd}\]
Note that the composition of the three 2-cells is the natural isomorphism induced by $\{ \eta _s : s\in S \}$. (*)

When $j$ is a limit ordinal we form the transfinite composition

\[\begin{tikzcd}
	&&&& {\mathcal{Y}} \\
	&&&& {\mathcal{Z}_j} \\
	{\mathcal{Z}_0} && {\mathcal{Z}_1} && \dots
	\arrow["{i_{0,1}}"{description}, from=3-1, to=3-3]
	\arrow["{i_{1,2}}"{description}, from=3-3, to=3-5]
	\arrow["{i_{0,j}}"{description, pos=0.4}, from=3-1, to=2-5]
	\arrow["{i_{1,j}}"{description, pos=0.4}, from=3-3, to=2-5]
	\arrow["{\rho _0}"{description}, curve={height=-12pt}, from=3-1, to=1-5]
	\arrow["{\rho_1}"{description}, curve={height=-6pt}, from=3-3, to=1-5]
	\arrow["{\rho _j}"', dashed, from=2-5, to=1-5]
\end{tikzcd}\]

Let $\lambda $ be a cardinal, such that domains of $I$ are $\lambda $-small. The composition $\mathcal{X} \xrightarrow{i_{0,\lambda}} \mathcal{Z}_{\lambda} \xrightarrow{\rho _{\lambda}} \mathcal{Y}$ is isomorphic to $f$ and $i_{0,\lambda} \in I$-cell by the previous propositions.
 
It remains to prove that $\rho _\lambda \in I$-$inj$. Take a square
\[\begin{tikzcd}
	{\mathcal{A}} && {\mathcal{Z}_\lambda} \\
	\\
	{\mathcal{B}} && {\mathcal{Y}}
	\arrow["h"{description}, from=1-1, to=1-3]
	\arrow["{\rho _\lambda}"{description}, from=1-3, to=3-3]
	\arrow["g"{description}, from=1-1, to=3-1]
	\arrow["k"{description}, from=3-1, to=3-3]
	\arrow["\eta"{description}, shorten <=11pt, shorten >=11pt, Rightarrow, from=1-3, to=3-1]
\end{tikzcd}\]
As $\mathcal{A}$ is $\lambda $-small, $h$ factors through some stage $\mathcal{Z}_j$ (up to isomorphism). This means that the back face of the left cube in 

\[\begin{tikzcd}
	{\mathcal{A}} &&& {\mathcal{Z}_{j}} && {\mathcal{Z}_{\lambda}} \\
	& {\sqcup \mathcal{A}_s} & {\mathcal{Z}_{j}} \\
	& {\sqcup \mathcal{B}_s} & {\mathcal{Z}_{j+1}} \\
	{\mathcal{B}} &&& {\mathcal{Y}}
	\arrow["{h'}"{description}, from=1-1, to=1-4]
	\arrow["{\rho _j}"{description, pos=0.7}, from=1-4, to=4-4]
	\arrow["g"{description}, from=1-1, to=4-1]
	\arrow["k"{description}, from=4-1, to=4-4]
	\arrow["{\sqcup g_s}"', from=2-2, to=3-2]
	\arrow[from=3-2, to=3-3]
	\arrow[from=2-3, to=3-3]
	\arrow["{\sqcup h_s}", from=2-2, to=2-3]
	\arrow[from=1-1, to=2-2]
	\arrow[from=4-1, to=3-2]
	\arrow[from=3-3, to=4-4]
	\arrow[Rightarrow, no head, from=2-3, to=1-4]
	\arrow["{i_{j,\lambda }}"{description}, from=1-4, to=1-6]
	\arrow["{i_{j,\lambda }}"{description, pos=0.6}, from=2-3, to=1-6]
	\arrow["{i_{j+1,\lambda }}"{description}, from=3-3, to=1-6]
	\arrow["{\rho _{\lambda}}"{description, pos=0.7}, from=1-6, to=4-4]
	\arrow["{\rho _j}"{description}, color={rgb,255:red,110;green,110;blue,110}, curve={height=-6pt}, from=2-3, to=4-4]
	\arrow["{\sqcup k_s}"{description}, color={rgb,255:red,110;green,110;blue,110}, curve={height=6pt}, from=3-2, to=4-4]
	\arrow["h"{description}, curve={height=-12pt}, from=1-1, to=1-6]
\end{tikzcd}\]
was considered in the formation of $\mathcal{Z}_{j+1}$.
This face is just the gluing of
\[\begin{tikzcd}
	{\mathcal{A}} &&& {\mathcal{Z}_{j}} \\
	& {\mathcal{Z}_{\lambda}} \\
	{\mathcal{B}} &&& {\mathcal{Y}}
	\arrow["{h'}"{description}, from=1-1, to=1-4]
	\arrow["g"{description}, from=1-1, to=3-1]
	\arrow["k"{description}, from=3-1, to=3-4]
	\arrow[""{name=0, anchor=center, inner sep=0}, "{\rho _j}"{description}, from=1-4, to=3-4]
	\arrow[""{name=1, anchor=center, inner sep=0}, "h"{description}, from=1-1, to=2-2]
	\arrow["{i_{j,\lambda}}"{description}, from=1-4, to=2-2]
	\arrow["{\rho _\lambda}"{description}, from=2-2, to=3-4]
	\arrow[shorten <=4pt, shorten >=8pt, Rightarrow, from=2-2, to=3-1]
	\arrow[shorten <=17pt, shorten >=22pt, Rightarrow, from=0, to=2-2]
	\arrow[shorten <=36pt, shorten >=22pt, Rightarrow, from=1-4, to=1]
\end{tikzcd}\]

By (*) the left cube is a commutative (identical) 3-cell, and so is the cone over the $\mathcal{Z}_n$'s. Hence the lift $\mathcal{B}\to \sqcup \mathcal{B}_s \to \mathcal{Z}_{j+1} \to \mathcal{Z}_\lambda $ is a splitting of $\eta $.
\end{proof}

Finally we give the definition of a (2,1)-model structure. This is the special case of Definition 1.1 in \cite{infmodel} except that we require the existence of all limits and colimits not just the finite ones.

\begin{definition}
Let $\mathbf{C}$ be a 2-complete and 2-cocomplete (2,1)-category. Given three sub-2-categories $W$ (called weak equivalences), $Fib$ (called fibrations) and $Cof$ (called cofibrations) we say that $(\mathbf{C},W,Fib,Cof)$ is a \emph{model category} if the following axioms are satisfied:
\begin{itemize}
    \item Given $h\cong gf$ if two maps are weak equivalences then so is the third one.
    \item All three subcategories are closed under retracts, i.e.~given a diagram
\[\begin{tikzcd}
	&& \bullet \\
	\bullet && \bullet && \bullet \\
	\bullet &&&& \bullet
	\arrow[from=2-1, to=1-3]
	\arrow[from=1-3, to=2-5]
	\arrow[from=3-1, to=2-3]
	\arrow[from=2-3, to=3-5]
	\arrow["f", from=2-5, to=3-5]
	\arrow["g", from=1-3, to=2-3]
	\arrow["f"', from=2-1, to=3-1]
	\arrow["1"{description}, curve={height=12pt}, from=2-1, to=2-5]
	\arrow["1"{description}, curve={height=12pt}, from=3-1, to=3-5]
\end{tikzcd}\]
with strictly commuting front face, filled with the identity 3-cell, $g\in W/Fib/Cof$ implies $f\in W/Fib/Cof$.
\item Maps in $Fib $ have the right lifting property against $W\cap Cof$ and maps in $W\cap Fib$ have the right lifting property against $Cof$.
\item Every map $h$ can be written as $h\cong gf \cong g'f'$ with $f\in Cof $, $g\in W\cap Fib$ and $f'\in W\cap Cof$, $g'\in Fib$.
\end{itemize}
\end{definition}

\section{An overview of coherent categories}

\begin{definition}
A category $\mathcal{C}$ is \emph{coherent}, if it
\begin{itemize}
\item has finite limits,
\item has pullback-stable images, i.e.~every arrow can be factored as an effective epimorphism followed by a monomorphism, and effective epimorphisms are stable under pullbacks,
\item has pullback-stable finite unions.
\end{itemize}

A functor $F:\mathcal{C}\to \mathcal{D}$ is \emph{coherent} if it preserves finite limits, effective epimorphisms and finite unions.
\end{definition}

We will denote the (2,2)-category of small coherent categories, coherent functors and all natural transformations by $\mathbf{Coh}$ and the corresponding (2,1)-category whose 2-cells are the natural isomorphisms by $\mathbf{Coh}_{\sim }$. We will make use of the following results:

\begin{theorem}
The 1-category $\mathbf{Coh}_1$ is accessible.
\end{theorem}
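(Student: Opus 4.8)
The plan is to realise $\mathbf{Coh}_1$ as (the underlying ordinary category of) a 2-category of algebras for a well-behaved 2-monad on $\mathbf{Cat}$, and then to invoke the general accessibility theory for such. Recall that $\mathbf{Cat}$ is locally finitely presentable. The three pieces of coherent structure --- finite limits, pullback-stable $(\text{effective epi}, \text{mono})$ factorisations, and pullback-stable finite unions --- are each \emph{flexible}, essentially-algebraic structure on a category: a chosen terminal object and chosen pullbacks, chosen image factorisations, and chosen unions, subject to equations and universal properties that are themselves expressible by finite limits in $\mathbf{Cat}$. I would assemble these into a single accessible 2-monad $T$ on $\mathbf{Cat}$ (its generating operations are finitary, being finite limits and finite unions, so it has a rank) whose strict algebras are small categories equipped with a chosen coherent structure and whose strict algebra morphisms preserve it on the nose.

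The point of the 2-monadic packaging is that coherent functors are exactly the \emph{pseudomorphisms} of $T$-algebras: they preserve the coherent structure up to coherent isomorphism rather than strictly. Thus $\mathbf{Coh}_1$ is the underlying ordinary category of the 2-category $T\text{-}\mathrm{Alg}$ of $T$-algebras, pseudomorphisms and algebra 2-cells --- and this is also where the restriction to the $(2,1)$-fragment $\mathbf{Coh}_{\sim}$ fits, since the relevant algebra 2-cells are automatically invertible. I would then invoke the standard fact that for an accessible 2-monad on a locally presentable 2-category the 2-category of (pseudo)algebras and pseudomorphisms is accessible, so that its underlying ordinary category $\mathbf{Coh}_1$ is an accessible 1-category (Adámek--Rosický and Makkai--Paré for the accessibility machinery; Blackwell--Kelly--Power together with the coherence results of Lack and Power for the 2-monad input).

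Concretely, the accessibility of $T\text{-}\mathrm{Alg}$ can be exhibited by building it from $\mathbf{Cat}$ using only constructions under which accessible categories and accessible functors are closed. The category of strict $T$-algebras with strict morphisms is the category of models of a small mixed sketch, hence accessible by the Ehresmann--Makkai--Paré theorem; and the passage from strict to pseudomorphisms is carried out by iso-comma objects, inserters and equifiers, i.e.\ the PIE- (equivalently, flexible pseudo-) limit constructions under which accessible categories are closed. Each of ``preserves finite limits up to isomorphism'', ``preserves the chosen factorisation'', and ``preserves finite unions'' carves out an accessibly embedded full subcategory of the relevant functor category by such a pseudo-limit, and a finite iteration lands on $\mathbf{Coh}_1$.

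The main obstacle is precisely the up-to-isomorphism nature of the morphisms: strict monadicity, or the naive sketch in $\mathbf{Set}$, yields only the category whose functors preserve the structure on the nose, which is \emph{not} $\mathbf{Coh}_1$. Converting to pseudomorphisms without destroying accessibility is the crux, and it is what forces the use of the pseudoalgebra accessibility theorem (or the explicit PIE-limit closure) rather than a bare appeal to ``models of a sketch are accessible''. A secondary point to verify is that $T$ genuinely has a rank --- that generating finite limits, images and finite unions is a bounded operation --- which rests on all the structure being finitary and on pullback-stability being a finite-limit, hence sketchable, condition.
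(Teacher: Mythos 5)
Your proposal is correct in outline but takes a genuinely different route from the paper. The paper's proof is a one-line appeal to Makkai's theory of generalized sketches (its reference \emph{[sketches]}): $\mathbf{Coh}_1$ is exhibited there as a finitary injectivity class in a presheaf category, and small-injectivity classes in locally presentable categories are accessible. You instead go through two-dimensional monad theory: coherent categories as strict algebras for an accessible 2-monad $T$ on $\mathbf{Cat}$, coherent functors as pseudomorphisms, and accessibility of $T\text{-}\mathrm{Alg}$. Three caveats on your version. First, the accessibility of the category of algebras and \emph{pseudo}morphisms is not in Blackwell--Kelly--Power (they provide pseudomorphism classifiers and PIE-limits, not accessibility); it is a much later theorem (Bourke, ``Accessible aspects of 2-category theory''), so you must either cite that or actually carry out your fallback argument, which is essentially Makkai--Par\'e's: build the category over $\mathbf{Cat}$ by inserters, equifiers and iso-inserters inside the 2-category of accessible categories, which is closed under such PIE-limits. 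Second, the existence of a finitary 2-monad for coherent structure does not follow from ``finite limits are 2-monadic'' alone: the conditions that the epi part of the factorisation be \emph{effective} and that images and unions be \emph{pullback-stable} must be encoded essentially-algebraically (e.g.\ stability as invertibility of canonical comparison maps, imposed by adjoining formal inverses as partial operations with equationally defined domains); this is precisely the nontrivial step you defer, though it does go through. Third, a minor slip: algebra 2-cells here are not automatically invertible --- natural transformations between coherent functors need not be isomorphisms, which is exactly why the paper distinguishes $\mathbf{Coh}$ from $\mathbf{Coh}_{\sim}$ --- but this is immaterial since the statement concerns only the underlying 1-category. As for what each approach buys: the paper's citation is short and the injectivity-class presentation is finitary, which feeds into its later use of presentability ranks; your route is more structural, yielding in addition the locally presentable category of strict algebras, classifiers and flexible limits, but it rests on heavier and more recent machinery.
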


\begin{proof}
By \cite{sketches} $\mathbf{Coh}_1$ is a finitary injectivity class in a presheaf category, hence it is accessible.
\end{proof}

It is well-known since \cite{flexible} that $\mathbf{Coh}_{\sim }$ is 2-complete. Using accessibility of $\mathbf{Coh}_1$, it follows by Section 9.3 of \cite{adjoint} that $\mathbf{Coh}_{ \sim}$ is 2-cocomplete as well, so giving:

\begin{theorem}
$\mathbf{Coh}_{\sim }$ is 2-complete and 2-cocomplete.
\end{theorem}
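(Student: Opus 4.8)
The plan is to treat 2-completeness and 2-cocompleteness by quite different means, since the two directions rest on genuinely distinct principles. For 2-completeness I would show that conical 2-limits of coherent categories, formed in $\mathbf{Cat}$, are again coherent. Given a 2-diagram $d_\bullet:\mathcal{I}\to\mathbf{Coh}_{\sim}$, I would form the conical 2-limit $d$ of the underlying diagram valued in $\mathbf{Cat}$ and verify that $d$ inherits finite limits, pullback-stable images and pullback-stable unions, all detected through the projection functors $p_i$; that each $p_i$ is coherent; and that the defining universal property of the 2-limit holds, because it already holds in $\mathbf{Cat}$ while the coherent structure is visible on the underlying categories. That a 2-category of categories-with-structure is closed in $\mathbf{Cat}$ under flexible 2-limits — conical limits being a special case — is exactly the flexible-limits machinery, so this half reduces to \cite{flexible}.

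For 2-cocompleteness the same strategy is unavailable: a 2-colimit of coherent categories computed in $\mathbf{Cat}$ is generally not coherent, so colimits cannot be read off the underlying categories. I would therefore route the argument through accessibility. The preceding theorem gives that the underlying 1-category $\mathbf{Coh}_1$ is accessible, and I would pair this with the 2-completeness just obtained. The decisive input is a two-dimensional version of the classical fact that a complete accessible category is automatically cocomplete: an accessible (2,1)-category possessing all 2-limits also possesses all 2-colimits, the colimits arising from a solution-set and adjoint-functor argument rather than from an explicit construction. This is the content of Section 9.3 of \cite{adjoint}, so the second half amounts to checking its hypotheses — accessibility of $\mathbf{Coh}_1$ and 2-completeness of $\mathbf{Coh}_{\sim}$ — and applying it.

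The main obstacle lies entirely on the cocompleteness side; completeness is formal, since limits of structured objects are structured and the universal property transports from $\mathbf{Cat}$. Cocompleteness is nonconstructive and genuinely relies on accessibility, which is precisely why $\mathbf{Coh}_1$ was shown to be accessible first. Were one to seek a self-contained proof in place of the citation to \cite{adjoint}, the hard work would be to lift the ordinary adjoint functor theorem to the (2,1)-setting: to assemble the $\lambda$-accessibility data and the 2-limits into an actual 2-colimit cocone whose universal property holds up to the coherent invertible 2-cells, and not merely at the level of underlying categories.
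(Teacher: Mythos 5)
Your proposal follows essentially the same route as the paper: 2-completeness is obtained from the flexible-limits machinery of \cite{flexible}, and 2-cocompleteness is deduced from the accessibility of $\mathbf{Coh}_1$ (established in the preceding theorem) together with the two-dimensional adjoint-functor/cocompleteness result of Section 9.3 of \cite{adjoint}. The extra detail you supply about conical limits and the solution-set argument is a faithful elaboration of what those citations provide, so the proof is correct and matches the paper's.
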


\begin{theorem}
Let $d_{\bullet }:\mathcal{\lambda }\to \mathbf{Coh}_{\sim }$ be a strict diagram (i.e.~all 2-cells are identical). Then its colimit $d$ in $\mathbf{Coh}_1$ is also the 2-colimit in $\mathbf{Coh}_{\sim }$.
\label{nicefiltered}
\end{theorem}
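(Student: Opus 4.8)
The plan is to verify directly that the strict cocone exhibiting the $1$-colimit satisfies the $2$-dimensional universal property dual to the one in Definition~1. Write $d=\mathrm{colim}_i d_i$ for the colimit in $\mathbf{Coh}_1$, with coprojections $c_i:d_i\to d$; strictness of the diagram gives $c_{i+1}f_i=c_i$, so the family $(c_i)$ is a strict, hence a pseudo, cocone, and for every $a$ it induces a comparison functor of groupoids
\[\Phi_a:\mathbf{Coh}_{\sim}(d,a)\longrightarrow Nat(d_\bullet,\Delta a).\]
The theorem amounts to showing that each $\Phi_a$ is an equivalence. The first ingredient I would record is that, $\mathbf{Coh}_1$ being accessible and arising as a finitary injectivity class in a presheaf category, filtered colimits in it are computed as in the ambient presheaf category; since the ordinal $\lambda$ is totally ordered and hence a filtered shape, the underlying category of $d$ is the ordinary filtered colimit of the underlying categories of the $d_i$, and its coherent structure — finite limits, image factorisations and finite unions — is computed stagewise, because any finite diagram in $d$ already lives in some $d_i$ and finite limits commute with filtered colimits.

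For essential surjectivity, given a pseudococone $(q_i,\nu_i)$ with $\nu_i:q_{i+1}f_i\Rightarrow q_i$, I would build a coherent functor $q:d\to a$ together with a modification exhibiting $\Phi_a(q)\cong(q_i,\nu_i)$. Since every object and every morphism of $d$ is represented at some finite stage, one fixes a representative for each and defines $q$ there by the corresponding $q_i$; the composite transition isomorphisms $\nu_{ij}:q_jf_{ij}\Rightarrow q_i$ supply precisely the data needed to transport morphisms between different chosen stages, making $q$ a well-defined functor. That $q$ is coherent then follows from the stagewise description of the structure of $d$ together with the coherence of each $q_i$, using stability of finite limits under the filtered colimit, and the comparison modification is assembled from the $\nu_i$.

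For full faithfulness I would argue that, because the $c_i$ are jointly essentially surjective and every hom-set of $d$ is a filtered colimit of hom-sets of the $d_i$, a modification $\Phi_a(q)\Rightarrow\Phi_a(q')$ — that is, a compatible family of natural isomorphisms $qc_i\Rightarrow q'c_i$ — glues to a unique natural isomorphism $q\Rightarrow q'$, and conversely every natural isomorphism $q\Rightarrow q'$ restricts to such a compatible family. This yields a bijection on $2$-cells and completes the proof that $\Phi_a$ is an equivalence of groupoids.

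The main obstacle is the essential surjectivity step: the naive assignment $c_i(x)\mapsto q_i(x)$ is only well defined up to the isomorphisms $\nu_{ij}$ and never on the nose, so the genuine work is to organise this coherence data into an honest coherent functor and to verify that it preserves finite limits, effective epimorphisms and finite unions. Once the stagewise computation of these constructions in the filtered colimit is in hand, the remainder is careful bookkeeping with the $\nu_i$; the alternative, abstract route — comparing $d$ with the $2$-colimit guaranteed by $2$-cocompleteness and showing the canonical comparison map an equivalence — reduces to the same pointwise analysis.
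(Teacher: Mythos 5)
Your proposal is correct, and it reaches the statement by a genuinely more hands-on route than the paper does. The paper factors the comparison functor as $\mathbf{Coh}_{\sim}(d,a)\to Strict(d_{\bullet},\Delta(a))\to Nat(d_{\bullet},\Delta(a))$, observes that the first map is an isomorphism by the $1$-colimit property and that the second is automatically fully faithful, and then reduces everything to essential surjectivity of the second map: a pseudococone, viewed in $\mathbf{Cat}$, can be replaced by an isomorphic strict cocone because filtered colimits in $\mathbf{Cat}$ are $2$-colimits (Lemma 5.4.9 of Makkai--Par\'e's \emph{Accessible Categories}), and since being a coherent functor is isomorphism-invariant, this strict cocone already lives in $\mathbf{Coh}_{\sim}$. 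Your argument instead verifies the equivalence directly: your essential-surjectivity step --- choosing stagewise representatives and transporting along the composite isomorphisms $\nu_{ij}$ --- is precisely a by-hand proof of the cited lemma, and your gluing argument for $2$-cells does explicitly what the paper buries in the assertion that the first map is an iso. The main thing the paper's route buys is that the step you correctly single out as the hardest --- verifying that the functor $q$ assembled from the $\nu_{ij}$-transport is coherent --- disappears entirely: the strictified cocone's components are isomorphic to the coherent functors $q_i$, hence coherent, with nothing to check. What your route buys is self-containedness (no appeal to an external strictification result) and an explicit record of the two facts both proofs ultimately rest on, namely that the forgetful functor $\mathbf{Coh}_1\to\mathbf{Cat}_1$ preserves filtered colimits with the coherent structure computed stagewise, and the $2$-dimensional universal property of filtered colimits of categories; your justification of the first fact via the presheaf embedding is slightly loose as stated, but the fact itself is standard and is equally presupposed by the paper.
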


\begin{proof}
We must show that $\mathbf{Coh}_{\sim }(d,a)\to Strict(d_{\bullet },\Delta(a))\to Nat(d_{\bullet},\Delta(a))$ is an equivalence, or equivalently that the second map is an equivalence since the first one is an iso, or even just that the second map is essentially surjective since it is automatically fully faithful.  Here $Strict$ refers to strict cocones.  Consider a pseudo-natural transformation $d_{\bullet}\Rightarrow \Delta(a)$. Since filtered colimits are 2-colimits in $\mathbf{Cat}$ by Lemma 5.4.9 of \cite{accessible}, we can replace this by an isomorphic strict cocone in $\mathbf{Cat}$ --- but since being a coherent functor is isomorphism invariant, this means that the strict cocone belongs to $\mathbf{Coh}_{\sim }$ as well, proving the claim.
\end{proof}

This implies that in the inductive proof of Theorem \ref{smallob} the sequence $\mathcal{Z}_0 \xrightarrow{i_{0,1}} \dots $ can be chosen to be strict, so any $\lambda $ with $cf(\lambda )> sup \{\kappa _f :f\in I\}$ works, where $\kappa _f$ is the presentability rank of $dom(f)$. Hence we have:

\begin{theorem}
Let $I$ be a small set of coherent functors. Given a coherent functor $\mathcal{C}\xrightarrow{M} \mathcal{E}$ it is isomorphic to a composition $\mathcal{C}\xrightarrow{F}\mathcal{D}\xrightarrow{G}\mathcal{E}$ where $F\in I$-cell (moreover $F$ is the strict transfinite composition of 2-pushouts from $I$) and $G\in I$-$inj$. In particular $F\in I$-$cof$.
\label{smallforcoh}
\end{theorem}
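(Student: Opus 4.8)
The plan is to apply the (2,1)-categorical small object argument (Theorem \ref{smallob}) to the strict (2,1)-category $\mathbf{Coh}_{\sim}$. By the theorems recalled above this category is locally small, 2-complete and 2-cocomplete, so the only hypothesis to verify is that the domains of $I$ are small relative to $I$-cell; granting this, Theorem \ref{smallob} yields the factorisation $M \cong G\circ F$ with $F\in I$-cell and $G\in I$-$inj$, and $F\in I$-$cof$ follows from the inclusion $I$-cell $\subseteq I$-$cof$ proved at the start of Section 2.

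To check smallness I would invoke accessibility of $\mathbf{Coh}_1$: each domain $dom(f)$ has a presentability rank $\kappa_f$, so that the ordinary hom $\mathbf{Coh}_1(dom(f),-)$ preserves $\kappa_f$-filtered colimits. I then fix $\lambda$ with $cf(\lambda) > \sup\{\kappa_f : f\in I\}$. The task is to transfer this $1$-categorical preservation to the $2$-categorical smallness demanded by the definition, namely that $\mathbf{Coh}_{\sim}(dom(f),-)$ commutes with $\lambda$-filtered sequential $2$-colimits taken in $I$-cell. This is exactly where Theorem \ref{nicefiltered} enters: the recursion in the proof of \ref{smallob} can be run so that the tower $\mathcal{Z}_0\xrightarrow{i_{0,1}}\mathcal{Z}_1\to\cdots$ is a \emph{strict} diagram --- the successor maps compose on the nose and at limit stages one forms the colimit in $\mathbf{Coh}_1$, which by \ref{nicefiltered} is simultaneously the $2$-colimit in $\mathbf{Coh}_{\sim}$. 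Along such a strict $\lambda$-sequence the $2$-colimit is therefore computed as the filtered $1$-colimit, so that the factorisation up to isomorphism of a coherent functor $dom(f)\to\mathcal{Z}_\lambda$ through some earlier stage is precisely the content of $\kappa_f$-presentability; the corresponding statement for natural isomorphisms follows similarly, the components of any such isomorphism factoring through a common stage by regularity of $\lambda$ and smallness of $dom(f)$.

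I expect the main obstacle to be exactly this translation between the $1$- and $2$-categorical notions of smallness, since the small object argument is phrased in terms of $2$-colimits whereas accessibility only supplies ordinary presentability. The resolution rests entirely on the possibility of arranging the SOA tower to be strict, so that the two kinds of colimit coincide by \ref{nicefiltered}; the same observation immediately yields the ``moreover'' clause, that $F$ is the strict transfinite composition of the maps $i_{j,j+1}$, each of which is a $2$-pushout of a coproduct of maps from $I$ and hence lies in $I$-cell by the earlier proposition on homotopy pushouts of coproducts.
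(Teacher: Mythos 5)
Your proposal is correct and matches the paper's own argument: the paper likewise deduces this theorem by observing that, via Theorem \ref{nicefiltered}, the tower in the proof of Theorem \ref{smallob} can be chosen strict, so that accessibility of $\mathbf{Coh}_1$ supplies the presentability ranks $\kappa_f$ and any $\lambda$ with $cf(\lambda)>\sup\{\kappa_f : f\in I\}$ witnesses the required smallness. Your added detail on why strictness makes the $2$-colimit agree with the filtered $1$-colimit (and hence why $1$-presentability suffices) is exactly the implicit content of the paper's one-paragraph justification.
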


Now we recall from \cite{makkai} the correspondence between coherent categories/coherent functors and coherent theories/interpretations.

\begin{definition}
Let $L $ be a (many-sorted) signature. An $L $-formula is called \emph{coherent} if it is built up from atomic formulas using finite $\wedge $ (including $\top$), finite $\vee$ (including $\bot $) and $\exists $. Their class is denoted by $L_{\omega \omega }^g$.

A formula of the form $\forall x_1 \dots \forall x_n (\varphi \to \psi )$ with $\varphi ,\psi \in L_{\omega \omega }^g$ is called a \emph{coherent sequent} and it is written as $\varphi \Rightarrow \psi $. A \emph{coherent theory} is a set of coherent sequents.
\end{definition}

\begin{definition}
Given a signature $L$, an \emph{$L$-structure} in a category $\mathcal{C}$ with finite products, associates to each sort $X$ an object $M(X)$ of $\mathcal{C}$, to each relation symbol $R$ a subobject $M(R) \leq M(X_1)\times \dots \times M(X_n)$, and to a function symbol $f$ a morphism $M(f):M(X_1)\times \dots \times M(X_n)\to M(X)$. 
\end{definition}

To define the models of a coherent theory internally to some coherent category $\mathcal{C}$ we need to interpret $L_{\omega \omega }^g$ formulas in an $L$-structure $M$ in $\mathcal{C}$:

\begin{definition}
Let $M$ be an $L$-structure in a coherent category $\mathcal{C}$. The \emph{interpretation} of a coherent formula is given by the following steps:
\begin{itemize}
\item If $\vec{x}=(x_1,\dots x_n)$ is a finite sequence of free variables, $x_i$ is of sort $X_i$, then $M(\vec{x})=M(X_1)\times \dots \times M(X_n)$.
\item If $t$ is a term (of sort $Y$) whose free variables are from $\vec{x}$, then $M_{\vec{x}}(t)$ will be an arrow $M(\vec{x})\to M(Y)$ in the following way:
\begin{itemize}
\item If $t=x_i$, then $M_{\vec{x}}(t)$ is the projection map $M(\vec{x})\to M(X_i)$.
\item If $t=f(t_1,\dots t_n)$, then $M_{\vec{x}}(t)$ is the composite $M(\vec{x})\xrightarrow {\langle M_{\vec{x}}(t_1),\dots \rangle} \prod M(Y_i) \xrightarrow{M(f)} M(Y)$
\end{itemize}
When $\mathcal{C}=\mathbf{Set}$, these are the functions which to a possible evaluation of $\vec{x}$ assign the induced value of $t$.
\item If $\varphi $ is a formula, whose free variables are among $\vec{x}=(x_1,\dots x_n)$, then its interpretation in the context $\vec{x}$ is a subobject $M_{\vec{x}}(\varphi )\leq M(\vec{x})$. It can be readily checked that in the case of $\mathbf{Set}$-models this gives precisely the set of evaluations of $\vec{x}$ which make $\varphi $ valid in $M$.

\begin{tabularx}{\textwidth}{XX} \begin{tikzpicture}
\node (1) at (0,0) {$M_{\vec{x}}(t_1\approx t_2) $};
\node[anchor=base,right=10mm of 1] (2) {$M(\vec{x})$};
\node[anchor=base,right=10mm of 2] (3) {$M(Y)$};
\draw[right hook ->] (1)--(2) node [midway,above] {\small{$e$}};
\draw[postaction={transform canvas={yshift=-1mm},draw}] [->] (2) -- (3) node [midway,above] {\small{$M_{\vec{x}}(t_1)$}} node [midway,below] {\small{$M_{\vec{x}}(t_2)$}};
\end{tikzpicture} &
is an equalizer.
\end{tabularx}

\begin{tabularx}{\textwidth}{XX}
\begin{tikzpicture}
\node (3) at (0,0) {$M_{\vec{x}}(R(t_1,\dots t_n))$};
\node[anchor=base,right=10mm of 3] (2a) {$M(\vec{x})$};
\node[anchor=base,below=10mm of 3] (2b) {$M(R)$};
\node[anchor=base,below=10mm of 2a] (1) {$\prod _{i=1}^n M(Y_i)$};
\draw[right hook->] (3)--(2a);
\draw[->] (3)--(2b);
\draw[->] (2a)--(1) node [midway,right] {$\langle M_{\vec{x}}(t_1),\dots \rangle$};
\draw[right hook->] (2b)--(1) node [midway,below] {$M(i)$};
\end{tikzpicture}
& \centering is a pullback.
\end{tabularx}
\begin{tabularx}{\textwidth}{X}
 $M_{\vec{x}}(\bigwedge \Theta)=\bigwedge\{M_{\vec{x}}(\theta):\theta\in \Theta\}$ \\
 \end{tabularx}
 \begin{tabularx}{\textwidth}{X}
 $M_{\vec{x}}(\bigvee \Theta)=\bigvee\{M_{\vec{x}}(\theta):\theta\in \Theta\}$ \\
 \end{tabularx}
 \begin{tabularx}{\textwidth}{X}
\makecell{ $M_{\vec{x}}(\exists y \varphi)$ (where $y$ is not in $\vec{x}$) is the eff.~epi-mono factorisation:
\\
\begin{tikzpicture}
\node (3) at (0,0) {$M_{\vec{x},y}(\varphi)$};
\node[anchor=base,right=10mm of 3] (2a) {$M(\vec{x},y)$};
\node[anchor=base,right=10mm of 2a] (2b) {$M(\vec{x})$};
\node[anchor=base,below=10mm of 2a] (1) {$M_{\vec{x}}(\exists y \varphi)$};
\draw[right hook->] (3)--(2a);
\draw[->] (2a)--(2b) node [midway,above] {$\pi _{\vec{x}}$};
\draw[->>] (3)--(1);
\draw[right hook->] (1)--(2b);
\end{tikzpicture}
}
\end{tabularx}

\end{itemize}
\label{intpret}
\end{definition}

\begin{definition}
The sequent $\varphi \Rightarrow \psi$ is \emph{valid} in the structure $M$ (in symbols: $M\models \varphi \Rightarrow \psi $), iff $ M_{\vec{x}}(\varphi ) \leq M_{\vec{x}}(\psi ) $ (where $\vec{x}$ is the collection of all free variables in $\varphi \Rightarrow \psi $).

$M$ is a \emph{model} of the theory $T$ if all the sequents from $T$ are valid in $M$.

A \emph{homomorphism} $\alpha :M\to M'$ of $T$-models consists of an arrow $\alpha _X: M(X)\to M'(X)$ for each sort $X$, for which the square

\[\begin{tikzcd}
	{M(X_1)\times \dots \times M(X_n)} && {M(Y)} \\
	\\
	{M'(X_1)\times \dots \times M'(X_n)} && {M'(Y)}
	\arrow["{M(f)}", from=1-1, to=1-3]
	\arrow["{\alpha _{X_1} \times \dots \times \alpha _{X_n}}"', from=1-1, to=3-1]
	\arrow["{\alpha _Y}", from=1-3, to=3-3]
	\arrow["{M'(f)}", from=3-1, to=3-3]
\end{tikzcd}\]
commutes and the dashed arrow in

\[\begin{tikzcd}
	{M(R)} && {M(X_1)\times \dots \times M(X_n)} \\
	\\
	{M'(R)} && {M'(X_1)\times \dots \times M'(X_n)}
	\arrow[hook, from=1-1, to=1-3]
	\arrow["{\alpha _{X_1} \times \dots \times \alpha _{X_n}}", from=1-3, to=3-3]
	\arrow[hook, from=3-1, to=3-3]
	\arrow[dashed, from=1-1, to=3-1]
\end{tikzcd}\]
exists.

The category of $T$-models and homomorphisms in a category $\mathcal{C}$ is denoted by $T$-$mod(\mathcal{C})$.
\label{modelincat}
\end{definition}

At first we will replace categories with theories:

\begin{definition}
The \emph{canonical language} of the category $\mathcal{C}$ has the signature $L=L_{\mathcal{C}}$ which contains a sort $\bar{A}$ for every object $A$ of $\mathcal{C}$, and a function symbol $\bar{f}:\bar{A}\to \bar{B}$ for every such arrow $f$ of $\mathcal{C}$. Then $\mathcal{C}$ is naturally an $L$-structure by the identical interpretation of $L$ (i.e.~sending $\bar{A}$ to $A$ and $\bar{f}$ to $f$). More generally; each functor $F:\mathcal{C}\to \mathcal{D}$ creates an $L$-structure in $\mathcal{D}$.
\end{definition}

The following theorem (2.4.5. in \cite{makkai}) says, that from inside, $\mathcal{C}$ looks similar to $\mathbf{Set}$.

\begin{theorem}
Assume, that $\mathcal{C}$ has finite limits. Then the following diagrams in $\mathcal{C}$ have the stated properties, iff the sequents on their right side (have interpretation and) are valid (in $\mathcal{C}$, as a structure over its canonical language, with the identical interpretation for the signature).

\begin{tabularx}{\textwidth}{cXX}
1. & $A\xrightarrow{f} A$ is the identity on $A$ & $\Rightarrow f(a)\approx a$ \\ 
\hline 
2. & 
\begin{tikzpicture}
\node (3) at (0,0) {$A$};
\node[anchor=base,right=5mm of 3] (2a) {$C$};
\node[anchor=base,below=5mm of 3] (2b) {$B$};
\draw[->] (3)--(2a) node [midway,above] {$h$};
\draw[->] (3)--(2b) node [midway,left] {$f$};
\draw[->] (2b)--(2a) node [midway,below] {$g$};
\end{tikzpicture} is commutative & $\Rightarrow gf(a)\approx h(a)$\\ 
\hline 
3. & $A\xrightarrow{f} B$ is mono & $f(a)\approx f(a') \Rightarrow a\approx a'$ \\ 
\hline 
4. & $A\xrightarrow{f} B$ is surjective & $\Rightarrow \exists a:f(a)\approx b$ \\ 
\hline 
5. & $A$ is the terminal object & $\Rightarrow a\approx a'$ \\ & & $\Rightarrow \exists a: a\approx a$ \\ 
\hline 
6. & $A$ is the initial object & $a\approx a \Rightarrow $ \\ 
\hline 
7. & $A\xleftarrow{f} C\xrightarrow{g} B$ is a product diagram & \small{$f(c)\approx f(c') \wedge g(c)\approx g(c') \Rightarrow c\approx c'$} \\ & & $\Rightarrow \exists c (f(c)\approx a \wedge g(c)\approx b) $ \\ 
\hline 
8. & \begin{tikzpicture}
\node (1) at (0,0) {$E$};
\node[anchor=base,right=5mm of 1] (2) {$A$};
\node[anchor=base,right=5mm of 2] (3) {$B$};
\draw[right hook->] (1)--(2) node [midway,above] {$\epsilon$};
\draw[postaction={transform canvas={yshift=-1mm},draw}] [->] (2) -- (3) node [midway,above] {$f$} node [midway,below] {$g$};
\end{tikzpicture} is an equalizer & $f(a)\approx g(a) \Leftrightarrow \exists e: \epsilon (e)\approx a$ \\ 
\hline 
9. & \makecell[l]{$B\xhookrightarrow{g} X\xhookleftarrow{f_i} A_i$ ($i\in I$). \\ $B$ is the sup of $A_i$-s} & \small{$\bigvee _{i\in I} \exists a_i: f_i(a_i)\approx x \Leftrightarrow \exists b: g(b)\approx x$}\\ 
\hline 
10. & \makecell[l]{$B\xhookrightarrow{g} X\xhookleftarrow{f_i} A_i$ ($i\in I$). \\ $B$ is the inf of $A_i$-s} & \small{$\bigwedge _{i\in I} \exists a_i: f_i(a_i)\approx x \Leftrightarrow \exists b: g(b)\approx x$} \\ 
\end{tabularx}
\label{diagram}
\end{theorem}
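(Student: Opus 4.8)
The plan is to prove each of the ten correspondences separately by unwinding Definition~\ref{intpret} together with the definition of validity, specialised to the case where $M$ is $\mathcal{C}$ viewed over its own canonical language with the identical interpretation. First I would record the two facts on which everything rests. For a term of the form $\bar f(x_i)$ in a context $\vec x$, the interpretation $M_{\vec x}(\bar f(x_i))$ is just the composite $M(\vec x)\xrightarrow{\pi_i} A\xrightarrow{f} B$; and for terms $s,t$ the atomic formula $s\approx t$ interprets as the equalizer of $M_{\vec x}(s)$ and $M_{\vec x}(t)$ as a subobject of $M(\vec x)$. With these in hand rows 1 and 2 are immediate: $\Rightarrow f(a)\approx a$ (resp.\ $\Rightarrow gf(a)\approx h(a)$) is valid exactly when the top subobject $M_a(\top)=A$ is contained in the equalizer of $f$ and $1_A$ (resp.\ of $gf$ and $h$), i.e.\ when that equalizer is all of $A$, which holds iff $f=1_A$ (resp.\ $gf=h$).

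The next building block I would isolate is the interpretation of a single existential over a graph: for any arrow $f:A\to B$ the subobject $M_x(\exists a\,(f(a)\approx x))\le B$ equals the image of $f$. Indeed, in context $(a,x)$ the formula $f(a)\approx x$ cuts out the graph $\langle 1_A,f\rangle:A\hookrightarrow A\times B$, and projecting away $a$ and taking the effective-epi--mono factorisation returns exactly $\mathrm{im}(f)$. Two consequences follow. Row 3 ($f$ mono iff $f(a)\approx f(a')\Rightarrow a\approx a'$) holds because the antecedent interprets as the kernel pair of $f$ inside $A\times A$ and the sequent asserts that it lies in the diagonal, which is precisely monicity. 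Row 4 ($f$ surjective iff $\Rightarrow\exists a\,(f(a)\approx b)$) holds because the sequent asserts $\mathrm{im}(f)=B$, i.e.\ the monic part of the image factorisation is invertible, which is what it means for $f$ to be an effective epimorphism.

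For the remaining rows I would combine these pieces. Rows 5 and 7 each pair a ``mono'' sequent with a ``surjective'' sequent: in row 7 the first sequent says $\langle f,g\rangle:C\to A\times B$ is monic and the second says $\mathrm{im}\langle f,g\rangle=A\times B$; in row 5 the two sequents say that $A$ is subterminal (the diagonal $A\to A\times A$ is all of $A\times A$) and that $A\to 1$ is an effective epi. The categorical fact used in both is that in a coherent category a map which is simultaneously a monomorphism and an effective epimorphism is an isomorphism; this upgrades ``mono and surjective'' to ``$\langle f,g\rangle$ iso, hence $C$ is the product'' and ``$A\to 1$ iso, hence $A$ terminal.'' Row 6 is the observation that $a\approx a\Rightarrow{}$ is valid iff the top subobject $A$ is contained in the bottom subobject $0\hookrightarrow A$, i.e.\ $A$ is initial.

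Finally, rows 8, 9, 10 combine the image computation with the fact that $\bigwedge$ and $\bigvee$ interpret as meet and join of subobjects. For a mono $m$ the subobject $\exists\,(m(-)\approx x)$ is $m$ itself, so in row 8 the biconditional reads $\mathrm{eq}(f,g)=\mathrm{im}(\epsilon)=\epsilon$, which is exactly the condition that $E\xrightarrow{\epsilon}A$ be the equalizer of $f,g$; in rows 9 and 10 the biconditionals read $\bigvee_i A_i=B$ and $\bigwedge_i A_i=B$ as subobjects of $X$, i.e.\ $B$ is the supremum, resp.\ infimum, of the $A_i$. The only genuinely non-formal ingredient is the recurring ``mono-plus-effective-epi-implies-iso'' lemma, which I would state once and reuse; I expect the main (though routine) fuss to be the parenthetical proviso that the infinitary $\bigvee/\bigwedge$ of rows 9--10 admit interpretations only when the relevant joins and meets exist, so that in each such row the existence of the supremum/infimum on the left must be tracked in tandem with validity of the sequent on the right.
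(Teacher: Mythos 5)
First, a remark on the comparison itself: the paper does not prove this theorem at all --- it is quoted as Theorem 2.4.5 of \cite{makkai} --- so your attempt is being measured against an external, omitted proof. Your strategy (unwind Definition \ref{intpret}, reduce everything to graphs, images, equalizers, and the lemma that a monomorphism which is an effective epimorphism is invertible) is the standard one, and it correctly disposes of rows 1--5 and 7--10. One small bookkeeping point: the ``(have interpretation and)'' proviso needs tracking not only in rows 9--10 but also in rows 4, 5, 7, 8, where the interpretation of $\exists$ requires an image factorisation to exist; this is harmless, since in each forward direction the hypothesis supplies the factorisation (an effective epi composed with an identity, an isomorphism, or a monomorphism is its own image factorisation).

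The genuine gap is row 6. You assert that validity of $a\approx a\Rightarrow\bot$ means ``the top subobject $A$ is contained in the bottom subobject $0\hookrightarrow A$, i.e.\ $A$ is initial.'' Two things go wrong. Writing the least subobject of $A$ as $0\hookrightarrow A$ presupposes that an initial object exists and realises the bottom of $Sub(A)$, which is not given. More seriously, under Definition \ref{intpret} the interpretation of $\bot$ is merely the least element of the lattice $Sub(A)$, and ``$\top_A\le\bot_A$'' is strictly weaker than initiality of $A$ when $\mathcal{C}$ only has finite limits: in the category of (countable) unital rings take $A=\mathbf{Z}/p$; every subring contains $1$, so $Sub(A)=\{A\}$, hence $\bot_A$ exists, equals $A$, and the sequent is valid, yet $A$ is not initial ($\mathbf{Z}$ is). So your one-line argument --- and indeed the literal finitely-complete reading of this row --- fails. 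The row becomes true, and is surely intended, under the coherent reading, which is also the only setting in which Definition \ref{intpret} is actually stated: there the empty union is pullback-stable, so $\bot_A=A\times 0$ with $0=\bot_1$, validity then produces a map $A\to 0$, and stability forces $0$ to be a \emph{strict} initial object, whence $A\cong 0$. This stability/strictness argument is precisely the non-formal content of row 6, and it is the one place where your ``only routine fuss'' assessment is off; a correct write-up must either invoke coherence of $\mathcal{C}$ at this point or restate the row accordingly.
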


\begin{definition}
Let $\mathcal{C}$ be a coherent category. Its (coherent) \emph{internal theory} $T_{\mathcal{C}}$ (or $Th(\mathcal{C})$) over the signature $L_{\mathcal{C}}$ consists of those sequents which refer to identities, commutative triangles, finite limits, surjective arrows and finite unions (as it is described above).
\end{definition}

\begin{theorem}
The categories $T_{\mathcal{C}}$-$mod(\mathcal{E})$ and $\mathbf{Coh}(\mathcal{C},\mathcal{E})$ are isomorphic (and the isomorphism is given by $M_0^*$ where $M_0:L_{\mathcal{C}}\to \mathcal{C}$ is the identical interpretation).
\end{theorem}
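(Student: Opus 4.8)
The plan is to exhibit two mutually inverse functors. In one direction I take $M_0^*:\mathbf{Coh}(\mathcal{C},\mathcal{E})\to T_{\mathcal{C}}$-$mod(\mathcal{E})$ sending a coherent functor $F$ to the $L_{\mathcal{C}}$-structure $F\circ M_0$ in $\mathcal{E}$, i.e.~the structure with $\bar{A}\mapsto F(A)$ and $\bar{f}\mapsto F(f)$; since $L_{\mathcal{C}}$ has only sorts and function symbols (no relation symbols), a structure is precisely this data. On a natural transformation $\tau:F\Rightarrow G$ I take the family $(\tau_A)_A$ of components. In the other direction I take $\Psi:T_{\mathcal{C}}$-$mod(\mathcal{E})\to\mathbf{Coh}(\mathcal{C},\mathcal{E})$ sending a model $M$ to the assignment $F_M(A)=M(\bar{A})$, $F_M(f)=M(\bar{f})$, and a model homomorphism to its family of components. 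The real work is checking that each of these is well defined; that they are mutually inverse is then immediate.

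First I would check that $M_0^*$ lands in models. By construction every sequent of $T_{\mathcal{C}}$ is one of the sequents of Theorem \ref{diagram} witnessing a structural fact about some diagram $D$ in $\mathcal{C}$ (an identity, a commuting triangle, a terminal/product/equalizer cone, a cover, or a finite union). The key observation is a translation remark: interpreting such an $L_{\mathcal{C}}$-sequent through $F\circ M_0$ in $\mathcal{E}$ coincides with interpreting the corresponding sequent over the canonical language of $\mathcal{E}$, with its identical interpretation, about the image diagram $F(D)$. Hence Theorem \ref{diagram} read inside $\mathcal{E}$ tells us the sequent is valid exactly when $F(D)$ has the matching property; and since $F$ is coherent it preserves finite limits, effective epimorphisms and finite unions, so $F(D)$ does have that property. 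Therefore $F\circ M_0\models T_{\mathcal{C}}$.

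Next I would check $\Psi$. Functoriality of $F_M$ follows from the identity sequents (item 1) and the commuting-triangle sequents (item 2), which force $M(\overline{1_A})=1_{M(\bar{A})}$ and $M(\bar{g})M(\bar{f})=M(\overline{gf})$. Coherence of $F_M$ is again read off Theorem \ref{diagram} in $\mathcal{E}$ via the same translation remark: validity in $M$ of the terminal, product and equalizer sequents forces $F_M$ to send those cones to limit cones, and since every finite limit is built from these, $F_M$ preserves all finite limits (in particular monos); validity of the surjectivity sequents (item 4) sends covers to covers, i.e.~preserves effective epimorphisms; and validity of the union sequents (item 9) gives preservation of finite unions. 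Thus $F_M$ is coherent. On morphisms, the homomorphism condition for a function symbol $\bar{f}$ is exactly the naturality square of the component family at $f$, and the relation-symbol clause of Definition \ref{modelincat} is vacuous, so model homomorphisms correspond precisely to natural transformations.

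Finally the two assignments are strictly inverse: $\Psi M_0^*(F)$ sends $A\mapsto (F\circ M_0)(\bar{A})=F(A)$ and $f\mapsto F(f)$, hence equals $F$; and $M_0^*\Psi(M)$ sends $\bar{A}\mapsto F_M(A)=M(\bar{A})$ and $\bar{f}\mapsto M(\bar{f})$, hence equals $M$ (here again the absence of relation symbols means there is nothing further to match). Combined with the bijection on morphisms noted above, this gives an isomorphism of categories. I expect the only genuinely delicate point to be the translation remark of the second paragraph, which lets Theorem \ref{diagram} — stated only for identical interpretations — carry both directions at once and lets me avoid proving a general lemma that coherent functors commute with the interpretation of arbitrary coherent formulas.
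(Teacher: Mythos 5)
Your proof is correct, but there is nothing in the paper to compare it against: the paper states this theorem without proof, recalling it from \cite{makkai} as part of the standard dictionary between coherent categories and their internal theories. Judged on its own, your argument is a sound self-contained verification, and it gets the two points that matter right: (i) the correspondence is a strict isomorphism of categories rather than a mere equivalence, precisely because $L_{\mathcal{C}}$ has no relation symbols and only unary function symbols, so an $L_{\mathcal{C}}$-structure in $\mathcal{E}$ is literally the same data as an assignment of objects and arrows; and (ii) all the mathematical content is carried by Theorem \ref{diagram}, applied once in each direction (property of $F(D)$ implies validity for $M_0^*$, validity implies property of $F_M(D)$ for $\Psi$). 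The one ingredient you state but do not prove is your "translation remark", and it is genuinely load-bearing: Theorem \ref{diagram} speaks only about identical interpretations, so without the remark it cannot be invoked inside $\mathcal{E}$ at all. Fortunately the remark is true and routine: every clause of Definition \ref{intpret} refers only to the objects and arrows assigned to the symbols occurring in the formula, so an induction on terms and formulas shows that interpretation commutes with pulling back a structure along a morphism of signatures (here $L_{\mathcal{C}}\to L_{\mathcal{E}}$, $\bar{A}\mapsto \overline{F(A)}$, $\bar{f}\mapsto\overline{F(f)}$); a careful write-up should include this induction. One small ordering point: your preservation of finite unions reads item 9 of Theorem \ref{diagram} inside $\mathcal{E}$, which presupposes that $F_M$ sends the monos of the union diagram to monos, so that check must come after (and quote) the already-established preservation of finite limits.
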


Now we will replace theories with categories. The idea behind the syntactic category is that a model can be seen as a collection of definable sets (evaluations of formulas) together with definable functions (whose graphs are definable). The syntactic category is the natural parametrisation of this category, i.e.~the one whose functorial images are precisely these collections.

The notion of derivability ($\vdash $) refers to a deduction system which is sound wrt.~every coherent category and which is complete wrt.~$\mathbf{Set}$-models, see \cite{makkai} for the details.

\begin{definition}
Let $T$ be a coherent theory. Its \emph{syntactic category} $\mathcal{C}_T$ is defined as follows:
\begin{itemize}
\item The objects are equivalence classes of coherent formulas (in context) over the given signature, where $\varphi (\vec{x})\sim \psi (\vec{y})$, iff $\psi (\vec{y})=\varphi (\vec{y}/ \vec{x})$. Note that $[\varphi (\vec{x})]$ and $[\varphi (\vec{x},\vec{y})]$ (with $y$ being an extra variable not present in $\varphi$) corresponds to different objects. This technicality is not essential, as $[\varphi (\vec{x},y)]$ turns out to be isomorphic with $[\varphi (\vec{x}) \wedge y\approx y]$, hence if we require all variables in the context $\vec{x}$ to appear freely in $\varphi $ we get an equivalent category.
\item An arrow $[\varphi (\vec{x})]\xrightarrow{[\theta (\vec{x},\vec{y})]} [\psi (\vec{y})]$ is an equivalence class of formulas, having the following properties:
\begin{itemize}
\item $\vec{x}$ and $\vec{y}$ are disjoint (this can always be assumed, as we can find such representatives of the objects),
\item $T\vdash \theta(\vec{x},\vec{y}) \Rightarrow \varphi (\vec{x}) \wedge \psi (\vec{y})$,
\item $T\vdash \varphi (\vec{x}) \Rightarrow \exists \vec{y} \theta(\vec{x},\vec{y})$,
\item $T\vdash \theta(\vec{x},\vec{y}) \wedge \theta(\vec{x},\vec{y'}) \Rightarrow \vec{y}=\vec{y'}$.
\end{itemize}
$\theta(\vec{x},\vec{y}) \sim \theta'(\vec{x'},\vec{y'})$, iff $T\vdash \theta(\vec{x},\vec{y}) \Leftrightarrow \theta'(\vec{x},\vec{y})$.
\end{itemize}

The identity arrow $1_{[\varphi (\vec{x}) ]}$ is given as $[\varphi (\vec{x})]\xrightarrow{[\varphi (\vec{x})\wedge \vec{x}\approx \vec{x'}]} [\varphi (\vec{x'})]$. The composition of $[\varphi (\vec{x})]\xrightarrow{[\theta (\vec{x},\vec{y})]} [\psi (\vec{y})] \xrightarrow{[\mu (\vec{y},\vec{z})]} [\chi (\vec{z})]$ is represented by $\exists \vec{y}(\theta (\vec{x},\vec{y}) \wedge \mu (\vec{y},\vec{z}))$.
\end{definition}

\begin{remark}
The required properties for $\theta $ are often referred as being "T-provably functional". This is because these are exactly the conditions which can guarantee, that the interpretation of $\theta $ in a model $M$ is not merely a subobject of $M(\varphi )\times M(\psi )\leq M(\vec{x})\times M(\vec{y})$, but the graph of an arrow from $M(\varphi )$ to $M(\psi )$.
\end{remark}

\begin{theorem}
Given a coherent theory $T$ over a signature $L$, its syntactic category $\mathcal{C}_T$ is a well-defined coherent category. The interpretation $M_0:L\to \mathcal{C}_T$ which maps a sort $X$ to the object $[x\approx x]$ (where $x$ is a variable of sort $X$), an arrow $f:X_1\times \dots \times X_n\to Y$ to $[x_1\approx x_1 \wedge \dots \wedge x_n\approx x_n]\xrightarrow{[f(x_1,\dots x_n)\approx y]}[y\approx y]$ and a relation symbol $R\subseteq X_1\times \dots \times X_n$ to the subobject $[R(x_1,\dots x_n)]\xhookrightarrow{[R(x_1,\dots x_n) \wedge x_1\approx x_1' \wedge \dots x_n\approx x_n']}[x_1'\approx x_1' \wedge \dots x_n'\approx x_n']$ is a model of $T$ with the property that $T\vdash \varphi \Rightarrow \psi $ iff $M_0\models \varphi \Rightarrow \psi$. 

The categories $T$-$mod(\mathcal{E})$ and $\mathbf{Coh}(\mathcal{C}_T,\mathcal{E})$ are equivalent. The equivalence is given by $M_0^*$ which takes a coherent functor $F:\mathcal{C}_T\to \mathcal{E}$ to the $L$-structure $F\circ M_0$ which is a model of $T$. The restriction of a natural transformation $\alpha :F\Rightarrow G$ to the image of $M_0$ yields a homomorphism of $L$-structures. Moreover $M_0^*$ is surjective on objects (not just essentially surjective).

If $\mathcal{C}$ is coherent then the interpretation of its canonical language $\mathcal{C}=L_{\mathcal{C}}\to \mathcal{C}_{Th(\mathcal{C})}$ is an equivalence.
\label{bigthm}
\end{theorem}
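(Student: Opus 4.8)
The plan is to treat the three assertions in turn, following the classical development in \cite{makkai}, leaning throughout on the principle that each categorical operation in $\mathcal{C}_T$ is the categorification of a coherent connective, so that the exactness axioms of a coherent category become derivable rules of the coherent calculus. First I would check that $\mathcal{C}_T$ is a well-defined category: the four provably-functional clauses are exactly what guarantees that the composite $\exists \vec{y}(\theta \wedge \mu)$ is again provably functional, and associativity and the unit laws reduce to routine manipulations of $\vdash$ with $\exists$ and $\approx$. To see that $\mathcal{C}_T$ is coherent I would exhibit each piece of structure syntactically and read off the corresponding stability property: the terminal object is $[\top]$, the initial object is $[\bot]$, a product is $[\varphi(\vec{x}) \wedge \psi(\vec{y})]$ with $\vec{x},\vec{y}$ disjoint, equalizers (hence all finite limits) are cut out by conjoining the defining formula, the (effective epi, mono)-factorisation of $[\theta(\vec{x},\vec{y})]$ is through the subobject $[\exists \vec{x}\,\theta] \leq [\psi(\vec{y})]$, and finite unions are given by $\vee$. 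The only nonformal points are pullback-stability of effective epis and of unions, which are precisely the Frobenius/Beck--Chevalley and distributivity laws of the proof system. That $M_0$ is a model with $T\vdash (\varphi \Rightarrow \psi)$ iff $M_0 \models (\varphi \Rightarrow \psi)$ then follows from the identity $M_0(\varphi)=[\varphi]$, proved by induction on $\varphi$ by matching the clauses of Definition \ref{intpret} with the syntactic descriptions above; validity reads off as $[\varphi]\leq [\psi]$, which by construction of the subobject order means $T\vdash \varphi \Rightarrow \psi$.

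For the equivalence between $\mathbf{Coh}(\mathcal{C}_T,\mathcal{E})$ and $T$-$mod(\mathcal{E})$ the central lemma is that any coherent functor $F$ commutes with interpretation, $(F\circ M_0)(\varphi)=F([\varphi])$, again by induction on $\varphi$ using that $F$ preserves finite limits, images and unions; hence $F\circ M_0$ is a model and $M_0^*$ is well-defined. Surjectivity on objects comes from building, out of a model $N$, a coherent functor with $F([\varphi]):=N(\varphi)$ and the graph assignment on arrows; this is well-defined on $\sim$-classes because $N\models T$ (so provably equivalent formulas have equal interpretation), and its coherence is the statement that $N$ obeys the interpretation clauses, whence $F\circ M_0=N$ on the nose. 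For full faithfulness the point is that every object $[\varphi(\vec{x})]$ is a subobject of the product $[x_1\approx x_1 \wedge \dots \wedge x_n \approx x_n]$ of sorts, which coherent functors preserve; thus a natural transformation $F\Rightarrow G$ is determined by its components at the sorts (faithfulness), while the components of a given homomorphism of models extend over the subobject inclusions, the factoring of $\prod_i \alpha_{x_i}$ through $F([\varphi])\to G([\varphi])$ being proved by induction on $\varphi$ with the homomorphism conditions for function and relation symbols as base cases (fullness).

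The idempotence statement is then a corollary. Write $I:\mathcal{C}\to \mathcal{C}_{Th(\mathcal{C})}$ for the interpretation $M_0$ of the canonical language. By Theorem \ref{diagram}, $Th(\mathcal{C})$ records exactly the finite-limit, effective-epi and union data of $\mathcal{C}$, so the canonical structure is a model of $Th(\mathcal{C})$ and, by the equivalence just proved, corresponds to a coherent functor $E:\mathcal{C}_{Th(\mathcal{C})}\to \mathcal{C}$ with $E\circ I=\mathrm{id}_{\mathcal{C}}$ on the nose. For the reverse composite, $I(E([\varphi]))$ is the sort named by the interpretation $C_\varphi$ of $\varphi$ in $\mathcal{C}$, and the definable mono $C_\varphi \hookrightarrow \prod_i A_i$ is, by the sequents of Theorem \ref{diagram} belonging to $Th(\mathcal{C})$, provably the subobject $[\varphi]$; this gives a natural isomorphism $I\circ E\cong \mathrm{id}$, so $I$ is an equivalence.

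I expect the main obstacle to be the verification that $\mathcal{C}_T$ satisfies the exactness axioms --- concretely, pullback-stability of effective epimorphisms and of finite unions --- since these are the steps where one must pass from bare syntactic descriptions to genuine structural properties, and they rest on the Frobenius, Beck--Chevalley and distributivity rules of the coherent calculus rather than on formal bookkeeping; the full details of all three parts are carried out in \cite{makkai}.
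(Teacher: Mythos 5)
Your proposal is correct and follows essentially the same route as the paper: the paper does not prove this theorem but recalls it from \cite{makkai}, and immediately afterwards lists exactly the syntactic constructions you give (terminal object $[\top]$, products by conjunction in disjoint contexts, equalizers, unions by $\vee$ on formula-represented subobjects, and image factorisation through $[\exists\vec{x}\,\mu]$). Your sketch --- well-definedness of composition via provable functionality, $M_0(\varphi)=[\varphi]$ by induction, surjectivity on objects by reading a model off as a functor, full faithfulness via the monos into products of sorts, and the quasi-inverse $E$ with $E\circ I=\mathrm{id}$ for the idempotence claim --- is the standard Makkai--Reyes development that the paper's citation points to, so there is nothing to correct.
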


As a consequence we can prove that the forgetful 2-functor $\mathbf{U}:\mathbf{Coh}_{\sim }\to \mathbf{Cat}_{\sim }$ has a (2-categorical) left adjoint.

\begin{definition}
Let $\mathcal{A}$ be an ordinary category. We can see it as a signature whose sorts are the objects of $\mathcal{A}$ and whose unary function symbols are the morphisms. Then form the theory $Th(\mathcal{A})$ which consists of the sequents corresponding to the commutative triangles and identities in $\mathcal{A}$ (see Theorem \ref{diagram}). Let $\mathbf{F}(\mathcal{A})$ be its syntactic category and $\eta _{\mathcal{A}}:\mathcal{A}\to \mathbf{F}(\mathcal{A})$ be the interpretation of the signature (which is a functor when $\mathcal{A}$ is seen as a category).
\end{definition}

\begin{proposition}
$\mathbf{F}$ extends to a 2-functor $\mathbf{Cat}_{\sim}\to \mathbf{Coh}_{\sim }$ which is left adjoint to $\mathbf{U}$.
\end{proposition}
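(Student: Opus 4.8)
The plan is to exhibit $\eta_{\mathcal{A}}\colon \mathcal{A}\to \mathbf{U}\mathbf{F}(\mathcal{A})$ as a universal arrow and to read off both the $2$-functoriality of $\mathbf{F}$ and the adjunction from the resulting universal property. The only substantial input is a groupoid equivalence
\[
\mathbf{Coh}_{\sim}(\mathbf{F}(\mathcal{A}),\mathcal{E})\;\simeq\;\mathbf{Cat}_{\sim}(\mathcal{A},\mathbf{U}(\mathcal{E})),
\]
given by precomposition with $\eta_{\mathcal{A}}$ and pseudonatural in $\mathcal{A}$ and $\mathcal{E}$; everything else is formal.

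First I would establish this hom-equivalence for fixed $\mathcal{A}$ and $\mathcal{E}$. Unwinding the definitions, a model of $Th(\mathcal{A})$ in $\mathcal{E}$ is an $L_{\mathcal{A}}$-structure, i.e.\ a choice of object $M(A)$ for each object $A$ of $\mathcal{A}$ and an arrow $M(f)\colon M(A)\to M(B)$ for each morphism $f\colon A\to B$; the identity sequents of $Th(\mathcal{A})$ force $M(1_A)=\mathrm{id}_{M(A)}$ and the commutative-triangle sequents force $M(g)M(f)=M(gf)$ (Theorem~\ref{diagram}, items $1$ and $2$). Hence a $Th(\mathcal{A})$-model is exactly a functor $\mathcal{A}\to \mathcal{E}$, a homomorphism of models is exactly a natural transformation, and isomorphisms of models are precisely natural isomorphisms. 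Theorem~\ref{bigthm} then supplies an equivalence $\mathbf{Coh}(\mathbf{F}(\mathcal{A}),\mathcal{E})\simeq Th(\mathcal{A})\text{-}mod(\mathcal{E})$ via $F\mapsto F\circ M_0=F\circ \eta_{\mathcal{A}}$, and passing to the groupoid cores yields the displayed equivalence, the comparison functor being $F\mapsto \mathbf{U}(F)\circ \eta_{\mathcal{A}}$.

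Next I would produce the $2$-functorial data. Naturality in $\mathcal{E}$ is strict: for a coherent $H\colon \mathcal{E}\to \mathcal{E}'$ one has $\mathbf{U}(HF)\circ \eta_{\mathcal{A}}=\mathbf{U}(H)\circ(\mathbf{U}(F)\circ \eta_{\mathcal{A}})$, so postcomposition by $H$ corresponds to postcomposition by $\mathbf{U}(H)$. For a functor $P\colon \mathcal{A}\to \mathcal{B}$ I define $\mathbf{F}(P)\colon \mathbf{F}(\mathcal{A})\to \mathbf{F}(\mathcal{B})$ to be the coherent functor corresponding, under the equivalence with $\mathcal{E}=\mathbf{F}(\mathcal{B})$, to the functor $\eta_{\mathcal{B}}\circ P\colon \mathcal{A}\to \mathbf{U}\mathbf{F}(\mathcal{B})$; it is characterised up to unique isomorphism by a chosen $2$-cell $\mathbf{U}(\mathbf{F}(P))\circ \eta_{\mathcal{A}}\cong \eta_{\mathcal{B}}\circ P$, and a natural isomorphism $P\cong P'$ transports to a unique coherent $2$-cell $\mathbf{F}(P)\cong \mathbf{F}(P')$. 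The compositor and unitor constraints, together with pseudonaturality of the hom-equivalence in $\mathcal{A}$, are then forced by the essential uniqueness in the universal property.

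I expect the main obstacle to be purely organisational: checking the coherence of the $2$-cells produced above (the associativity and unit conditions for $\mathbf{F}$ and the pseudonaturality conditions for the unit $\eta$), since $\mathbf{F}(P)$ is only pinned down up to canonical isomorphism. None of this requires new ideas beyond the uniqueness clause of Theorem~\ref{bigthm}; once the hom-equivalence is seen to be pseudonatural, the general principle that a pseudofunctor equipped with such universal arrows underlies a left $2$-adjoint --- here in the $(2,1)$-setting, where ``universal'' means ``equivalence of hom-groupoids'' --- finishes the proof.
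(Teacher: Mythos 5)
Your proposal takes essentially the same route as the paper: both obtain the equivalence $\mathbf{Coh}(\mathbf{F}(\mathcal{A}),\mathcal{E})\simeq Th(\mathcal{A})$-$mod(\mathcal{E})\cong \mathbf{Cat}(\mathcal{A},\mathbf{U}(\mathcal{E}))$ from Theorem~\ref{bigthm} together with the observation that $Th(\mathcal{A})$-models are exactly functors out of $\mathcal{A}$, and then restrict to groupoid cores (the paper phrases this as the equivalence reflecting isomorphisms) to get the hom-equivalence $\mathbf{Coh}_{\sim}(\mathbf{F}(\mathcal{A}),\mathcal{E})\simeq \mathbf{Cat}_{\sim}(\mathcal{A},\mathbf{U}(\mathcal{E}))$. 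The paper is terser, leaving the $2$-functoriality of $\mathbf{F}$ and the pseudonaturality bookkeeping implicit, whereas you spell these out, but the mathematical content is the same.
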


\begin{proof}
By the previous theorem $\eta _{\mathcal{A}}^*:\mathbf{Coh}(\mathbf{F}(\mathcal{A}), \mathcal{C})\to \mathbf{Cat}(\mathcal{A},\mathbf{U}(\mathcal{C}))\cong Th(\mathcal{A})$-$mod(\mathcal{C})$ is an equivalence of categories. As it reflects isomorphisms we can see it as an equivalence $\mathbf{Coh}_{\sim }(\mathbf{F}(\mathcal{A}), \mathcal{C})\to \mathbf{Cat}_{\sim}(\mathcal{A},\mathbf{U}(\mathcal{C}))$.
\end{proof}

We briefly list the constructions for finite limits, unions and image-facto\-ri\-sations in the syntactic category:

The terminal object is $[\top ]$. Binary products are given as
\[\begin{tikzcd}
	&& {[\chi (\vec{z})]} \\
	\\
	&& {[\varphi (\vec{x}) \wedge \psi (\vec{y})]} \\
	\\
	{[\varphi (\vec{x'})]} &&&& {[\psi (\vec{y'})]}
	\arrow["{[\varphi (\vec{x}) \wedge \psi (\vec{y}) \wedge \vec{x}\approx \vec{x'}]}"{description}, from=3-3, to=5-1]
	\arrow["{[\varphi (\vec{x}) \wedge \psi (\vec{y}) \wedge \vec{y}\approx \vec{y'}]}"{description}, from=3-3, to=5-5]
	\arrow["{[\mu (\vec{z},\vec{x'})]}"', curve={height=18pt}, from=1-3, to=5-1]
	\arrow["{[\nu (\vec{z},\vec{y'})]}", curve={height=-18pt}, from=1-3, to=5-5]
	\arrow[dashed, from=1-3, to=3-3]
\end{tikzcd}\]
where the dashed arrow is represented by $\mu (\vec{z},\vec{x}) \wedge \nu (\vec{z},\vec{y})$.

Equalizers are given as

\[\begin{tikzcd}
	{[\exists y (\mu (x,y)\wedge \nu (x,y))]} &&&& {[\varphi (x')]} && {[\psi (y)]} \\
	\\
	& {[\chi (z)]}
	\arrow["{[\exists y (\mu (x,y)\wedge \nu (x,y)) \wedge x\approx x']}", from=1-1, to=1-5]
	\arrow["{[\mu (x',y)]}", shift left=2, from=1-5, to=1-7]
	\arrow["{[\nu (x',y)]}"', shift right=2, from=1-5, to=1-7]
	\arrow["{[\tau (z,x')]}"', from=3-2, to=1-5]
	\arrow["{[\tau (z,x)]}", dashed, from=3-2, to=1-1]
\end{tikzcd}\]

It can be proved that every subobject of $[\psi (\vec{x})]$ can be represented by a monomorphism of the form $[\varphi  (\vec{x'})]\xrightarrow{[\varphi (\vec{x'}) \wedge \vec{x'}\approx \vec{x}]} [\psi (\vec{x})]$. Then unions are given as  $[\bigvee _i\varphi _i (\vec{x'})]\xrightarrow{[\bigvee _i\varphi _i(\vec{x'}) \wedge \vec{x'}\approx \vec{x}]} [\psi (\vec{x})]$.

Finally image-factorisations can be constructed as
\[\begin{tikzcd}
	{[\varphi (\vec{x})]} &&&& {[\psi (\vec{y})]} \\
	&& {[\exists \vec{x} \mu (\vec{x},\vec{y'})]}
	\arrow["{[\mu (\vec{x},\vec{y})]}", from=1-1, to=1-5]
	\arrow["{[\mu (\vec{x},\vec{y'})]}"'{pos=0.4}, from=1-1, to=2-3]
	\arrow["{[\exists \vec{x} \mu (\vec{x},\vec{y'}) \wedge \vec{y'}\approx \vec{y}]}"'{pos=0.6}, from=2-3, to=1-5]
\end{tikzcd}\]

We recall the idea of pretopos completion from \cite{makkai}.

\begin{definition}
A coherent functor $F:\mathcal{C}\to \mathcal{D}$ is said to be a \emph{weak equivalence} if $F^*:\mathbf{Coh}(\mathcal{D},\mathbf{Set})\to \mathbf{Coh}(\mathcal{C},\mathbf{Set})$ is an equivalence (i.e.~when $F$ is a Morita-equivalence). The class of weak equivalences is denoted by $W$.
\end{definition}

\begin{definition}
A coherent category $\mathcal{C}$ is a pretopos if it has
\begin{itemize}
    \item (finite) disjoint coproducts,
    \item quotients by equivalence relations, i.e.~given a subobject $j:R\hookrightarrow A\times A$ such that the axioms of reflectivity, symmetry and transitivity (see Definition \ref{jdef}) are valid in $\mathcal{C}$ (under the identical interpretation), the coequalizer of $\pi _1 j$ and $\pi _2j $ exists.
\end{itemize}
\end{definition}

\begin{theorem}
Given a coherent category $\mathcal{C}$ there exists a pretopos $\mathbf{R}\mathcal{C}$ and a coherent functor $\rho _{\mathcal{C}}:\mathcal{C}\to \mathbf{R}\mathcal{C}$ such that for any pretopos $\mathcal{S}$ the dashed arrow in
\[\begin{tikzcd}
	{\mathcal{C}} && {\mathbf{R}\mathcal{C}} \\
	\\
	&& {\mathcal{S}}
	\arrow["{\rho _{\mathcal{C}}}", from=1-1, to=1-3]
	\arrow[""{name=0, anchor=center, inner sep=0}, "F"', from=1-1, to=3-3]
	\arrow["\mathbf{R}F", dashed, from=1-3, to=3-3]
	\arrow["\alpha", shorten <=6pt, shorten >=6pt, Rightarrow, from=1-3, to=0]
\end{tikzcd}\]
exists and it is essentially unique: given $F'$ and $\beta :F'\rho _{\mathcal{C}}\Rightarrow F$ there is a unique natural isomorphism $\eta :\mathbf{R}F\Rightarrow F'$ such that the pasting of $\beta $ and $\eta $ is $\alpha $. Moreover $\rho _{\mathcal{C}}$ is a weak equivalence.
\end{theorem}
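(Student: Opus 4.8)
The plan is to construct $\mathbf{R}\mathcal{C}$ syntactically via the correspondence between coherent categories and coherent theories developed above, mirroring the construction of the syntactic category but now building in disjoint coproducts and quotients by equivalence relations. First I would pass from $\mathcal{C}$ to its internal theory $T_{\mathcal{C}}$, and enlarge the language and theory to $T_{\mathcal{C}}^{+}$ by freely adjoining: for each finite family of sorts a new sort playing the role of their disjoint coproduct, together with the coprojection function symbols and the coherent sequents asserting that these are disjoint coproducts (using items 5, 6, 9 of Theorem \ref{diagram} to express initiality, disjointness and the sup-property); and for each $T_{\mathcal{C}}$-provable equivalence relation $R$ a new quotient sort with a function symbol and sequents asserting it is the coequalizer. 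Iterating this closure $\omega$ times yields a coherent theory $T_{\mathcal{C}}^{\ast}$ whose models are precisely the coherent-functor images that extend to pretopos data. Then I would set $\mathbf{R}\mathcal{C}:=\mathcal{C}_{T_{\mathcal{C}}^{\ast}}$, the syntactic category, which is coherent by Theorem \ref{bigthm}, and verify directly that the new sorts provide the required disjoint coproducts and quotients, so that $\mathbf{R}\mathcal{C}$ is a pretopos. The functor $\rho_{\mathcal{C}}:\mathcal{C}\to \mathbf{R}\mathcal{C}$ is the composite of the equivalence $\mathcal{C}\simeq \mathcal{C}_{Th(\mathcal{C})}$ with the functor induced by the theory inclusion $T_{\mathcal{C}}\subseteq T_{\mathcal{C}}^{\ast}$.

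For the universal property, the key input is again Theorem \ref{bigthm}: a coherent functor $F:\mathcal{C}\to \mathcal{S}$ corresponds to a $T_{\mathcal{C}}$-model in $\mathcal{S}$. Since $\mathcal{S}$ is a pretopos, it already has disjoint coproducts and quotients by equivalence relations, so this model interprets every adjoined sort of $T_{\mathcal{C}}^{\ast}$ in an essentially unique way (the disjoint coproduct sorts as actual coproducts, the quotient sorts as actual coequalizers), extending $F$ to a $T_{\mathcal{C}}^{\ast}$-model, i.e.\ a coherent functor $\mathbf{R}F:\mathbf{R}\mathcal{C}\to \mathcal{S}$. The comparison $2$-cell $\alpha:\mathbf{R}F\,\rho_{\mathcal{C}}\Rightarrow F$ is the canonical isomorphism coming from the fact that both functors restrict to the same $T_{\mathcal{C}}$-model along $M_0$; essential uniqueness of the extension (existence of a unique $\eta:\mathbf{R}F\Rightarrow F'$ with the stated pasting condition) follows from the essential uniqueness of coproducts and coequalizers in $\mathcal{S}$ together with the surjectivity-on-objects and full-faithfulness properties of $M_0^{\ast}$ recorded in Theorem \ref{bigthm}.

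Finally, to see that $\rho_{\mathcal{C}}$ is a weak equivalence, I would observe that for the test pretopos $\mathcal{S}=\mathbf{Set}$ the universal property gives exactly an equivalence of model categories: every $\mathbf{Set}$-model of $T_{\mathcal{C}}$ extends uniquely up to isomorphism to a $\mathbf{Set}$-model of $T_{\mathcal{C}}^{\ast}$, since $\mathbf{Set}$ is a pretopos. Translating through $M_0^{\ast}$ this says precisely that $\rho_{\mathcal{C}}^{\ast}:\mathbf{Coh}(\mathbf{R}\mathcal{C},\mathbf{Set})\to \mathbf{Coh}(\mathcal{C},\mathbf{Set})$ is an equivalence, which is the definition of $\rho_{\mathcal{C}}\in W$.

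I expect the main obstacle to be the bookkeeping in the transfinite (or $\omega$-step) closure: adjoining quotient sorts creates new provable equivalence relations, and adjoining coproducts creates new sorts requiring further quotients, so one must argue that the iteration stabilises and that the resulting $\mathbf{R}\mathcal{C}$ genuinely has \emph{all} disjoint coproducts and \emph{all} quotients rather than only those present at some finite stage. The cleanest way to handle this is to note that each closure operation is finitary on formulas-in-context and to invoke a colimit/cofinality argument (as in the transfinite compositions of Section 2) to conclude that the $\omega$-fold closure is already saturated; verifying that disjointness and effectivity of the adjoined structure survive the image-factorisations in the syntactic category is the delicate point, and here one leans on items 9 and 10 of Theorem \ref{diagram} to keep the sequents coherent throughout.
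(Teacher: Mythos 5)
First, a point of comparison: the paper does not prove this statement at all. It is imported from the literature, with existence and the universal property cited as Theorem 8.4.1 of Makkai--Reyes and the weak-equivalence clause cited separately from Lurie's lecture notes. So your proposal cannot be measured against an internal proof; what you have written is a reconstruction of the standard syntactic construction (essentially Makkai--Reyes's own, the model-theoretic ``$T^{\mathrm{eq}}$'' construction), and as a construction it is the right one --- indeed the sequents you need to adjoin are exactly those the paper later writes out in Definition \ref{jdef} for $\mathcal{C}_{A/R}$ and $\mathcal{C}_{cov}$. Two remarks on the details. Adjoining coproduct and quotient sorts ``for each finite family of sorts'' suffices at the first stage, where every object of $\mathcal{C}_{T_{\mathcal{C}}}$ is isomorphic to a sort (Theorem \ref{bigthm}), but at later stages objects are arbitrary formulas-in-context, so you must derive their coproducts and quotients from those of contexts: the coproduct of $[\varphi(\vec{x})]$ and $[\psi(\vec{y})]$ is the evident subobject of the coproduct sort of the two contexts, and the quotient of $[\varphi(\vec{x})]$ by $R$ is its image in the quotient of the context by $R\vee \Delta$. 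Also, the transfinite bookkeeping you flag as the main obstacle can be avoided entirely: the iteration stabilises after \emph{two} steps, coproducts first and then quotients, because the quotient completion of a coherent category with disjoint coproducts still has disjoint coproducts and is already effective. Your $\omega$-fold closure with the finiteness-of-formulas/cofinality argument does work, it is just heavier than necessary.

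The one genuine gap is the last step. The universal property you establish is a (2,1)-statement: its 2-cells $\alpha$, $\beta$, $\eta$ are natural \emph{isomorphisms}. Applying it with $\mathcal{S}=\mathbf{Set}$ therefore yields an equivalence between the groupoids of models and invertible homomorphisms; it does not ``say precisely'' that $\rho_{\mathcal{C}}^{*}:\mathbf{Coh}(\mathbf{R}\mathcal{C},\mathbf{Set})\to \mathbf{Coh}(\mathcal{C},\mathbf{Set})$ is an equivalence, because membership in $W$ is defined via the categories of models with \emph{all} natural transformations, i.e.\ all homomorphisms, most of which are not invertible. (This mismatch is presumably why the paper cites a separate reference for exactly this clause.) The gap is fixable inside your framework, but by an additional argument, not a translation: show that an arbitrary homomorphism $h:M\to N$ of $T_{\mathcal{C}}$-models in $\mathbf{Set}$ extends uniquely to a homomorphism of $T_{\mathcal{C}}^{\ast}$-models, using (i) that coherent formulas are preserved by homomorphisms, so $h$ carries $M(R)$ into $N(R)$ and hence descends to the quotient sorts, and induces the obvious map on coproduct sorts; and (ii) that the coprojections and the quotient maps are jointly surjective in every model (by the covering and surjectivity axioms), which forces uniqueness of the extension. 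With that added, your proof of $\rho_{\mathcal{C}}\in W$ goes through.
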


(This appears as Theorem 8.4.1.~of \cite{makkai}. The fact that $\rho _{\mathcal{C}}$ is a weak equivalence can be found as Proposition 9., Lecture 13.~in \cite{lurie}.)

\begin{corollary}
The full (2,1)-subcategory of pretoposes is reflective in the (2,1)-categorical sense.
\end{corollary}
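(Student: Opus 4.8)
The plan is to unwind the meaning of reflectivity in the (2,1)-sense and match it against the universal property of the preceding theorem. Being reflective here means that the inclusion 2-functor of the full (2,1)-subcategory of pretoposes into $\mathbf{Coh}_{\sim }$ admits a left 2-adjoint; equivalently, for every coherent category $\mathcal{C}$ and every pretopos $\mathcal{S}$ the whiskering functor
\[ \rho_{\mathcal{C}}^*:\mathbf{Coh}_{\sim }(\mathbf{R}\mathcal{C},\mathcal{S})\to \mathbf{Coh}_{\sim }(\mathcal{C},\mathcal{S}) \]
should be an equivalence of groupoids. Thus the whole statement reduces to checking that $\rho_{\mathcal{C}}$ is essentially surjective and fully faithful in this sense; the 2-functoriality of $\mathbf{R}$ and the pseudonaturality of $\rho $ then follow by the usual formal manipulations from the pointwise universal property. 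Essential surjectivity is immediate: given $F:\mathcal{C}\to \mathcal{S}$ the theorem supplies $\mathbf{R}F$ together with an isomorphism $\alpha :\mathbf{R}F\circ \rho_{\mathcal{C}}\Rightarrow F$, so $\rho_{\mathcal{C}}^*(\mathbf{R}F)\cong F$.

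For full faithfulness I would argue through a comma groupoid. Fix $F:\mathcal{C}\to \mathcal{S}$ and let $C_F$ be the category whose objects are pairs $(G,\gamma )$ with $G:\mathbf{R}\mathcal{C}\to \mathcal{S}$ coherent and $\gamma :G\rho_{\mathcal{C}}\Rightarrow F$ a natural isomorphism, and whose morphisms $(G,\gamma )\to (G',\gamma ')$ are natural isomorphisms $\eta :G\Rightarrow G'$ satisfying $\gamma '\circ (\eta \rho_{\mathcal{C}})=\gamma $. Since every 2-cell of $\mathbf{Coh}_{\sim }$ is invertible, $C_F$ is in fact a groupoid. The essential-uniqueness clause of the theorem says precisely that $(\mathbf{R}F,\alpha )$ is an \emph{initial} object of $C_F$, and an initial object in a groupoid forces the groupoid to be contractible, i.e.\ there is a unique morphism between any two of its objects.

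Now take $G,G':\mathbf{R}\mathcal{C}\to \mathcal{S}$ together with a natural isomorphism $\sigma :G\rho_{\mathcal{C}}\Rightarrow G'\rho_{\mathcal{C}}$; I want a unique $\theta :G\Rightarrow G'$ with $\theta \rho_{\mathcal{C}}=\sigma $. Apply the previous paragraph to $F:=G\rho_{\mathcal{C}}$. Then $(G,1_{G\rho_{\mathcal{C}}})$ and $(G',\sigma ^{-1})$ are both objects of $C_F$, so there is a unique morphism between them, that is a unique $\theta :G\Rightarrow G'$ with $\sigma ^{-1}\circ (\theta \rho_{\mathcal{C}})=1_{G\rho_{\mathcal{C}}}$, equivalently $\theta \rho_{\mathcal{C}}=\sigma $. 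The existence of $\theta $ gives fullness of the whiskering map on 2-cells, and its uniqueness gives faithfulness, so $\rho_{\mathcal{C}}^*$ is an equivalence, which is what reflectivity in the (2,1)-sense demands.

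I expect the only delicate point to be the bookkeeping of the directions of the 2-cells and of the pasting equation $\gamma '\circ (\eta \rho_{\mathcal{C}})=\gamma $, so that the initiality of $(\mathbf{R}F,\alpha )$ genuinely matches the phrasing of the theorem (where the constraint is $\beta $ pasted with $\eta $ equals $\alpha $); once that translation is set up correctly, everything else is purely formal and no categorical content beyond the cited universal property is needed.
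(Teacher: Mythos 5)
Your proof is correct and follows the same route the paper intends: the paper states this corollary with no proof at all, treating it as an immediate consequence of the preceding theorem's universal property (existence of $\mathbf{R}F$ plus essential uniqueness), and your argument is precisely the formal verification of that implication — identifying reflectivity in the (2,1)-sense with each whiskering functor $\rho_{\mathcal{C}}^*$ being an equivalence of hom-groupoids, and checking essential surjectivity and full faithfulness from the theorem. Your comma-groupoid reformulation (initial object in a groupoid forces contractibility) is a clean and valid way to organize the bookkeeping that the paper leaves implicit.
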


We recall Makkai's conceptual completeness theorem:

\begin{theorem}
Let $\mathcal{C}$ be a pretopos. A coherent functor $F:\mathcal{C}\to \mathcal{D}$ is an equivalence iff it is a weak equivalence.
\label{conceptual}
\end{theorem}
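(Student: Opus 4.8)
The statement is an \emph{iff}, and one direction is purely formal, so I would dispose of it first: precomposition with an equivalence of categories is again an equivalence, so if $F$ is an equivalence then $F^{*}\colon\mathbf{Coh}(\mathcal{D},\mathbf{Set})\to\mathbf{Coh}(\mathcal{C},\mathbf{Set})$ is an equivalence and $F\in W$. All the effort goes into the converse, so the plan is to assume that $\mathcal{C}$ is a pretopos and that $F\colon\mathcal{C}\to\mathcal{D}$ is a weak equivalence, and to deduce that $F$ is an equivalence.

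My first step would be to drop the hypothesis on $\mathcal{D}$ and reduce to the case where source and target are \emph{both} pretoposes. I would form the pretopos completion $\rho_{\mathcal{D}}\colon\mathcal{D}\to\mathbf{R}\mathcal{D}$; by the theorem recalled above it is a weak equivalence, and it is moreover a full embedding, since the completion only freely adjoins disjoint coproducts and quotients of equivalence relations (a standard property of the construction, not stated in the excerpt but available from Makkai). Because $W$ is defined by asking that $(-)^{*}$ land on an equivalence, and equivalences satisfy two-out-of-three, $W$ itself satisfies two-out-of-three; hence $\rho_{\mathcal{D}}F\colon\mathcal{C}\to\mathbf{R}\mathcal{D}$ is a weak equivalence between the pretoposes $\mathcal{C}$ and $\mathbf{R}\mathcal{D}$. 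Granting the pretopos-to-pretopos case, $\rho_{\mathcal{D}}F$ is an equivalence; in particular it is essentially surjective, so $\rho_{\mathcal{D}}$ is essentially surjective too. Being also fully faithful, $\rho_{\mathcal{D}}$ is then an equivalence, and therefore $F\cong\rho_{\mathcal{D}}^{-1}(\rho_{\mathcal{D}}F)$ is a composite of equivalences, as wanted.

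It then remains to treat a weak equivalence $G\colon\mathcal{P}\to\mathcal{Q}$ between pretoposes, which is the genuine content of Makkai's theorem. The route I would take is the reconstruction (duality) theorem: the model category $\mathbf{Coh}(\mathcal{P},\mathbf{Set})$ carries a canonical ultracategory structure, with ultraproducts computed sortwise (ultraproducts of models are again models, by the \L o\'s theorem), and evaluation exhibits $\mathcal{P}$ as the category of (coherent, or ``left'') ultrafunctors $\mathbf{Coh}(\mathcal{P},\mathbf{Set})\to\mathbf{Set}$. A coherent functor induces a morphism of ultracategories on models, so the hypothesis that $G^{*}$ is an equivalence upgrades to an equivalence of ultracategories; applying the reconstruction to both sides and using naturality of evaluation identifies $G$ with the functor induced on ultrafunctor categories, which is an equivalence.

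The hard part is exactly this last reconstruction. Everything before it is formal bookkeeping with the completion and two-out-of-three, but recovering a pretopos from its ultracategory of models is the deep model-theoretic core: it rests on a definability statement to the effect that every family of subsets of the models stable under the ultrastructure is cut out by a coherent formula, which is proved via compactness together with Beth/omitting-types arguments in Makkai's treatment, or via the topos-theoretic machinery in Lurie's. For the present paper it is entirely legitimate to cite this as Makkai's conceptual completeness here; the purpose of the later sections is to repackage it as an instance of Whitehead's theorem once the $(2,1)$-model structure is available, rather than to reprove it.
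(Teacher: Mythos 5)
Your proposal is correct, but note that the paper offers no proof of Theorem \ref{conceptual} to compare against: it is recalled as Makkai's theorem, and later (in the proof of Theorem \ref{mainthm}) cited as Theorem 7.1.8 of \cite{makkai} in precisely this mixed form, with pretopos domain and arbitrary coherent codomain. So everything you write is extra relative to the paper, and the extra content is sound: the easy direction is formal, and your reduction to the pretopos-to-pretopos case uses only facts the paper itself records from \cite{makkai} --- that $\rho_{\mathcal{D}}$ is a weak equivalence, that $W$ satisfies the 2-out-of-3 property, and that weak equivalences are fully faithful (this last fact already gives full faithfulness of $\rho_{\mathcal{D}}$, so you do not need the separate ``full embedding'' property of the completion you appeal to). The reduction is a worthwhile observation, because the mixed form is the one the paper actually needs: in Theorem \ref{mainthm} conceptual completeness is applied to a functor into a 2-pushout $\mathcal{Q}$, which is not known in advance to be a pretopos. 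Where your route genuinely differs is the core: you invoke Makkai's ultracategory reconstruction (strong conceptual completeness), a strictly stronger and historically later theorem than Theorem 7.1.8 of \cite{makkai}, whose original proof is by more elementary model-theoretic arguments and does not pass through ultracategories (that machinery is the route taken in \cite{lurie}). Citing the duality theorem is legitimate and does yield the statement, as you say; it simply replaces the paper's black box by a larger one, in exchange for which your write-up isolates exactly where pretopos-ness of each side is used.
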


This can be reformulated as:

\begin{theorem}
A coherent functor $F:\mathcal{C}\to \mathcal{D}$ is a weak equivalence iff $\mathbf{R}F$ is an equivalence.
\end{theorem}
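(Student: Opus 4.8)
The plan is to reduce everything to the two-out-of-three property of $W$ together with Makkai's conceptual completeness (Theorem \ref{conceptual}), using the naturality square provided by the universal property of the pretopos completion. First I would record the square itself: applying the universal property of $\mathbf{R}$ to the composite $\rho_{\mathcal{D}}\circ F:\mathcal{C}\to \mathbf{R}\mathcal{D}$ (note $\mathbf{R}\mathcal{D}$ is a pretopos) yields the coherent functor $\mathbf{R}F:\mathbf{R}\mathcal{C}\to \mathbf{R}\mathcal{D}$ together with a natural isomorphism $\mathbf{R}F\circ \rho_{\mathcal{C}}\cong \rho_{\mathcal{D}}\circ F$. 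Thus the square

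\[\begin{tikzcd}
	{\mathcal{C}} && {\mathcal{D}} \\
	\\
	{\mathbf{R}\mathcal{C}} && {\mathbf{R}\mathcal{D}}
	\arrow["F", from=1-1, to=1-3]
	\arrow["{\rho _{\mathcal{C}}}"', from=1-1, to=3-1]
	\arrow["{\rho _{\mathcal{D}}}", from=1-3, to=3-3]
	\arrow["{\mathbf{R}F}"', from=3-1, to=3-3]
\end{tikzcd}\]
commutes up to isomorphism.

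Next I would establish the two auxiliary facts about $W$. Since $(G\circ H)^*\cong H^*\circ G^*$ and equivalences of categories satisfy two-out-of-three, the class $W$ satisfies two-out-of-three as well; and any equivalence $G$ lies in $W$, because then $G^*$ is an equivalence. Combined with the fact, recalled in the pretopos completion theorem, that $\rho_{\mathcal{C}}$ and $\rho_{\mathcal{D}}$ are themselves weak equivalences, the argument becomes purely formal. For the direction assuming $\mathbf{R}F$ is an equivalence: then $\mathbf{R}F\in W$, so $\mathbf{R}F\circ \rho_{\mathcal{C}}\in W$, hence $\rho_{\mathcal{D}}\circ F\in W$ by the isomorphism above, and cancelling $\rho_{\mathcal{D}}\in W$ by two-out-of-three gives $F\in W$. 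For the converse, assuming $F\in W$: then $\rho_{\mathcal{D}}\circ F\in W$, so $\mathbf{R}F\circ \rho_{\mathcal{C}}\in W$, and cancelling $\rho_{\mathcal{C}}\in W$ yields $\mathbf{R}F\in W$.

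The only step that is not formal — and the place where the theorem genuinely earns its name — is the last one: I would conclude that the weak equivalence $\mathbf{R}F:\mathbf{R}\mathcal{C}\to \mathbf{R}\mathcal{D}$ is in fact an equivalence. This is exactly Theorem \ref{conceptual} applied to the coherent functor $\mathbf{R}F$ whose domain $\mathbf{R}\mathcal{C}$ is a pretopos. So the substantive content is entirely carried by conceptual completeness, and the present statement is its expected repackaging; there is no separate obstacle beyond correctly invoking the universal property to build the commuting square and checking that $W$ enjoys two-out-of-three.
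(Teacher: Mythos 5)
Your proposal is correct and follows essentially the same route as the paper: the paper's own proof also reduces the statement to the 2-for-3 property of $W$ (applied across the naturality square $\mathbf{R}F\circ\rho_{\mathcal{C}}\cong\rho_{\mathcal{D}}\circ F$, using that the reflection units are weak equivalences) and then invokes Makkai's conceptual completeness for the pretopos $\mathbf{R}\mathcal{C}$. Your write-up merely makes explicit the bookkeeping (construction of the square, invariance of $W$ under isomorphism of functors, equivalences lying in $W$) that the paper compresses into two sentences.
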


\begin{proof}
Since $W$ clearly has the 2-for-3 property $F$ is a weak equivalence iff $\mathbf{R}F$ is a weak equivalence. By the conceptual completeness theorem the latter is equivalent to $\mathbf{R}F$ being an equivalence.
\end{proof}

As $\mathbf{R}$ is a left adjoint it preserves 2-colimits hence we get:

\begin{corollary}
$W$ is closed under transfinite composition, pushouts and retracts.
\label{wclosed}
\end{corollary}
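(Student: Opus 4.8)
The plan is to deduce the closure properties of $W$ from the fact that $\mathbf{R}$ is a left adjoint (in the 2-categorical sense) together with the reformulation just established, namely that $F \in W$ iff $\mathbf{R}F$ is an equivalence. The key bridge is the preceding theorem: it replaces the somewhat opaque condition ``$F^*$ is an equivalence of categories'' by the more tractable condition ``$\mathbf{R}F$ is an equivalence in $\mathbf{Coh}_\sim$'', and equivalences are much easier to track through colimits.

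First I would recall that $\mathbf{R}$, being the reflector onto the full (2,1)-subcategory of pretoposes, is a left 2-adjoint and hence preserves all 2-colimits; in particular it preserves transfinite compositions (which are the coprojection maps of $\lambda$-indexed 2-colimits, as in the definition of transfinite composition) and 2-pushouts. So if $(f_i)_{i<\lambda}$ is a $\lambda$-sequence in $W$ with transfinite composition $f$, then $\mathbf{R}f$ is the transfinite composition of the sequence $(\mathbf{R}f_i)_{i<\lambda}$, each of which is an equivalence by hypothesis. The same holds for a 2-pushout: $\mathbf{R}$ sends the pushout square to a pushout square, so $\mathbf{R}$ of the pushout of $f\in W$ is the pushout of the equivalence $\mathbf{R}f$.

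The remaining point in each case is that \emph{equivalences} (rather than merely weak equivalences) are stable under transfinite composition and pushout in $\mathbf{Coh}_\sim$. For transfinite composition this is essentially the observation that a transfinite composite of equivalences is an equivalence: a colimit of a diagram each of whose transition maps is an equivalence is equivalent to the initial object of the diagram. For pushouts, the pushout of an equivalence along any map is again an equivalence (this is the 2-categorical analogue of ``isomorphisms are stable under pushout'', and follows from the universal property, since pushing out along an equivalence does not change the object up to equivalence). Thus $\mathbf{R}f$ is an equivalence in each case, whence $f\in W$. For retracts, I would use that $W$ satisfies 2-for-3 (already noted) and that equivalences are closed under retracts: if $\mathbf{R}$ is applied to a retract diagram for $f$ as in the definition of the model structure, the preservation of the relevant structure gives a retract diagram exhibiting $\mathbf{R}f$ as a retract of $\mathbf{R}g$, an equivalence, and equivalences are retract-closed.

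**The main obstacle** I anticipate is verifying that equivalences in $\mathbf{Coh}_\sim$ are genuinely stable under 2-pushout and transfinite composition, since ``equivalence'' is a 2-categorical notion and one must argue with the appropriate (bicategorical) universal properties rather than with strict isomorphisms. One cannot simply quote the 1-categorical fact that isomorphisms are pushout-stable; instead one verifies it at the level of the representable 2-functors $\mathbf{Coh}_\sim(-, a)$, using that an equivalence $e$ induces an equivalence $e^*$ on hom-categories and that 2-(co)limits are computed, via their defining equivalences of cone categories, in a way that preserves this. Once this stability is in hand, the closure properties of $W$ follow immediately from preservation of 2-colimits by the left adjoint $\mathbf{R}$, together with 2-for-3 for the retract case.
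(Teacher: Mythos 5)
Your proposal is correct and is essentially the paper's own argument: the paper derives this corollary in one line from the preceding theorem ($F\in W$ iff $\mathbf{R}F$ is an equivalence) together with the fact that the left 2-adjoint $\mathbf{R}$ preserves 2-colimits, which is precisely your reduction. The details you supply --- stability of equivalences under 2-pushout and transfinite composition, preservation of retract diagrams by any 2-functor, and retract-closure of equivalences --- are exactly the steps the paper leaves implicit.
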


\section{The model structure}

We will apply the previous results to provide a (2,1)-model structure for $\mathbf{Coh}_{\sim }$, using the (2,1)-categorical small object argument. However, there is another possibility. Theorem 3.3.~of \cite{reflective} proves in the 1-categorical context that given a finitely well-complete category $\mathbf{C}$ (i.e.~finitely complete with all intersections) and a reflective subcategory $\mathbf{A}$, then taking $W$ to be the class of maps inverted by the reflector, the pair $(W,W$-$inj$ $)$ is a factorisation system and any such (so-called reflective) factorisation system results a model structure by taking $W$ to be the class of weak equivalences, $W$-$inj$ to be the class of fibrations and all maps to be cofibrations. Following Remark 3.8.~and Example 2.4.~of \cite{cellular} $W$ is proved to be cofibrantly generated in this case.
The advantage of our approach is that it gives an explicit description of the generating trivial cofibrations and the (2,1)-categorical small object argument should be of independent interest. Using the results of \cite{makkai} we will also prove the resulting model structure to be right proper. 

\begin{definition}
Take the signature $L_R $ with one sort $A$ and a binary relation symbol $R\subseteq A\times A$. Let $\mathcal{C}_R$ be the syntactic category of the theory

\begin{tabularx}{\textwidth}{XX}
$\top \Rightarrow R(a,a)$ & ($R$ is reflective) \\ 
$R(a,a')\Rightarrow R(a',a)$ & ($R$ is symmetric) \\ 
$R(a,a')\wedge R(a',a'') \Rightarrow R(a,a'')$ & ($R$ is transitive) \\ 
\end{tabularx}
We set $M_0:L_R \to \mathcal{C}_R$ to denote the canonical interpretation of the signature.

Then add a sort $B$ and a function symbol $p:A\to B$ to the signature (to form $L_{A/R}$) and add the axioms

\begin{tabularx}{\textwidth}{XX}
$\top \Rightarrow \exists a (b\approx p(a))$ & ($p$ is surjective) \\ 
$R(a,a')\Leftrightarrow p(a)\approx p(a')$ & ($p$ identifies precisely the $R$-equivalent elements) \\ 
\end{tabularx}
to the theory. Now form its syntactic category $\mathcal{C}_{A/R}$ with the canonical interpretation $M_0':L_{A/R}\to \mathcal{C}_{A/R}$ of the extended signature. Let $I:L_R\to L_{A/R}$ be the inclusion of the signatures. As $M_0'\circ I$ is a model of the defining theory of $\mathcal{C}_R$ by the surjectivity of $M_0^*$ on objects (Theorem \ref{bigthm}) we have a commutative square:
\[\begin{tikzcd}
	{L_R} && {L_{A/R}} \\
	& = \\
	{\mathcal{C}_R} && {\mathcal{C}_{A/R}}
	\arrow["I", from=1-1, to=1-3]
	\arrow["{M_0}"', from=1-1, to=3-1]
	\arrow["{M_0'}", from=1-3, to=3-3]
	\arrow["{M_{A/R}}"', dashed, from=3-1, to=3-3]
\end{tikzcd}\]
(and $M_{A/R}$ is unique up to isomorphism with the property that the above square commutes up to isomorphism, as $M_0^*$ is fully faithful).

Finally take the signature with sorts $A,B,S$ and unary function symbols $i_1:S\to A$ and $i_2:S \to B$. Let $\mathcal{C}_{A\cap B}$ be the syntactic category of the theory
\begin{tabularx}{\textwidth}{XX}
$i_{1,2}(s)\approx i_{1,2}(s') \Rightarrow s\approx s'$ & ($i_1$ and $i_2$ are monic)
\end{tabularx}
Now extend the signature with a new sort $X$ and unary function symbols $j_1:A\to X$, $j_2:B\to X$. Let $\mathcal{C}_{cov}$ be the syntactic category of the theory extending the previous sequents with
\begin{tabularx}{\textwidth}{XX}
$j_{1}(a)\approx j_{1}(a')\Rightarrow a\approx a'$ & ($j_1$ is monic) \\ 
$j_{2}(b)\approx j_{2}(b')\Rightarrow b\approx b'$ & ($j_2$ is monic) \\ 
 $ \top \Rightarrow \exists a (x\approx j_1(a)) \vee \exists b (x\approx j_2(b))$  & ($j_1$ and $j_2$ jointly cover $X$) \\ 
$j_1(a)\approx j_2(b) \Leftrightarrow \exists s: a\approx i_1(s) \wedge b\approx i_2(s)$ & \makecell{($j_1$ and $j_2$ identify precisely \\ the elements of $S$)} \\ 
\end{tabularx}
Let $M_{cov}:\mathcal{C}_{A\cap B}\to \mathcal{C}_{cov}$ be the evident interpretation (induced as before).

We set $J=\{M_{A/R}, M_{cov} \}$.
\label{jdef}
\end{definition}

\begin{definition}
The elements of $J$-$cof$ are called \emph{trivial cofibrations}, and the elements of $J$-$inj$ are called \emph{fibrations}. A coherent category $\mathcal{C}$ is called \emph{fibrant} if $\mathcal{C}\to *$ is a fibration.
\end{definition}

\begin{theorem}
Any coherent functor $M:\mathcal{C}\to \mathcal{D}$ factors as $M\cong GF$ where $F$ is a trivial cofibration (moreover $F\in J$-cell) and $G$ is a fibration.  
\end{theorem}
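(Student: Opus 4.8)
The plan is to recognize the statement as a direct instance of the small object argument in the form of Theorem \ref{smallforcoh}, applied to the set $I=J=\{M_{A/R},M_{cov}\}$. Unwinding the definitions, a trivial cofibration is by definition a member of $J$-$cof$ and a fibration is a member of $J$-$inj$; hence what must be produced is a factorization of $M$ into a map of $J$-cell (which automatically lies in $J$-$cof$) followed by a map of $J$-$inj$.

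First I would verify that $J$ is an admissible input for Theorem \ref{smallforcoh}. It is a two-element, hence small, set of coherent functors: both $M_{A/R}:\mathcal{C}_R\to\mathcal{C}_{A/R}$ and $M_{cov}:\mathcal{C}_{A\cap B}\to\mathcal{C}_{cov}$ are the canonical interpretations associated (as in Definition \ref{jdef}) to extensions of finite coherent theories, and these are coherent functors between small coherent categories by Theorem \ref{bigthm}. The domains $\mathcal{C}_R$ and $\mathcal{C}_{A\cap B}$ are small relative to $J$-cell because $\mathbf{Coh}_1$ is accessible, so each carries a presentability rank, exactly as required in the discussion preceding Theorem \ref{smallforcoh}.

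Applying Theorem \ref{smallforcoh} to $M$ with this choice of $I$ then yields $M\cong GF$ with $F\in J$-cell --- indeed a strict transfinite composition of 2-pushouts from $J$ --- and $G\in J$-$inj$. By definition $G$ is a fibration, and the same theorem records that $F\in J$-$cof$, so $F$ is a trivial cofibration. This is precisely the claimed factorization.

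I do not expect any serious obstacle, as all the substantive work resides in Section 2 and in Theorem \ref{smallforcoh}; the present statement is essentially a corollary obtained by specializing the general machinery to $J$. The only point demanding attention is confirming the smallness hypothesis of the small object argument for the domains of $J$, and this is immediate here since those domains are syntactic categories of finite theories and $\mathbf{Coh}_1$ is accessible.
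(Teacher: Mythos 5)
Your proposal is correct and matches the paper's intent exactly: the paper gives no separate proof of this theorem, treating it precisely as the specialization of Theorem \ref{smallforcoh} to $I=J$, with the smallness of domains already absorbed into that theorem's hypotheses via the accessibility of $\mathbf{Coh}_1$ and the strictness argument preceding it. Your verification of these hypotheses for $J=\{M_{A/R},M_{cov}\}$ is the same reasoning the paper relies on implicitly.
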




\begin{proposition}
Trivial cofibrations are closed under 2-pushouts. Fibrations are closed under 2-pullbacks.
\end{proposition}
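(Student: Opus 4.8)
The plan is to reduce both claims immediately to Proposition \ref{lifting}. The first step is to unwind the relevant definitions. By definition the trivial cofibrations form the class $J$-$cof$, namely ($J$-$inj$)-$proj$, that is, the maps having the left lifting property against every member of $J$-$inj$; dually the fibrations form $J$-$inj$, the maps having the right lifting property against each of the two generators $M_{A/R}$ and $M_{cov}$. Hence both classes are cut out purely by lifting conditions against a fixed family of arrows, which is exactly the setting of Proposition \ref{lifting}.

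For the fibrations I would take $g\in J$-$inj$ together with a $2$-pullback $g'$ of $g$, and check that $g'$ again lies in $J$-$inj$ by testing the right lifting property against each generator separately. Fixing $m\in J=\{M_{A/R},M_{cov}\}$, the map $g$ has the right lifting property against $m$, and since right lifting properties are preserved by $2$-pullbacks (the dual half of Proposition \ref{lifting}), the pulled-back map $g'$ inherits this property. As this holds for both generators, $g'\in J$-$inj$. The argument for trivial cofibrations is formally dual: given $f\in J$-$cof$ and a $2$-pushout $f'$ of $f$, I would fix an arbitrary $m\in J$-$inj$ and use that left lifting properties are preserved by $2$-pushouts (Proposition \ref{lifting}) to transport the lift from $f$ to $f'$; since $m$ was arbitrary, $f'$ lifts against all of $J$-$inj$, i.e.\ $f'\in J$-$cof$.

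The only point that needs care --- and so the ``hard part'', such as it is --- is the bookkeeping of the $2$-cells: one must make sure that the pasting condition defining a lift (for instance $f\nu_1\circ\nu_2 g=\eta$ in the definition of $J$-$inj$) survives the transport across the $2$-pushout, respectively the $2$-pullback. This compatibility is, however, precisely what was already verified in the proof of Proposition \ref{lifting} (and in the closure of $I$-$cof$ under pushouts that precedes it), so no new computation is required; the proof amounts to invoking that proposition once in each variance.
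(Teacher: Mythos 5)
Your proof is correct and matches the paper's: the paper's own argument is simply ``Follows from Proposition \ref{lifting},'' and your proposal spells out exactly that reduction, applying the preservation of left lifting properties under 2-pushouts to $J$-$cof$ and of right lifting properties under 2-pullbacks to $J$-$inj$. The extra bookkeeping of 2-cells you mention is indeed already contained in the proof of Proposition \ref{lifting}, so nothing further is needed.
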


\begin{proof}
Follows from Proposition \ref{lifting}.
\end{proof}

\begin{proposition}
Let $F:\mathcal{C}\to \mathcal{D}$ be a trivial cofibration. Then there is a map $G:\mathcal{C}\to \mathcal{D}'$ in $J$-cell such that $F$ is the retract of $G$.
\label{retract}
\end{proposition}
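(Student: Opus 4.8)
The plan is to run the classical \emph{retract argument}, adapted to the $(2,1)$-setting, where the only new ingredient is that the diagonal lift comes equipped with coherent $2$-cells. First I would apply the factorisation theorem proved just above to the trivial cofibration $F$ itself, writing $F\cong H\circ G$ with $G:\mathcal{C}\to \mathcal{D}'$ in $J$-cell and $H:\mathcal{D}'\to \mathcal{D}$ a fibration (i.e.~$H\in J$-$inj$). The object $\mathcal{D}'$ and the map $G$ will be precisely the data exhibiting $F$ as a retract.

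Next I would exploit that $F$, being a trivial cofibration, lies in $J$-$cof=(J$-$inj)$-$proj$ and hence has the left lifting property against every fibration, in particular against $H$. I would apply this to the square whose top edge is $G$, left edge is $F$, right edge is $H$ and bottom edge is $\mathrm{id}_{\mathcal{D}}$, filled by the factorisation isomorphism $H\circ G\cong F$. By the lifting property this produces a coherent functor $L:\mathcal{D}\to \mathcal{D}'$ together with $2$-isomorphisms $\nu_1:L\circ F\Rightarrow G$ and $\nu_2:H\circ L\Rightarrow \mathrm{id}_{\mathcal{D}}$, subject to the compatibility clause built into the definition of $J$-$inj$, namely that the pasting of $\nu_1$ and $\nu_2$ recovers the chosen factorisation isomorphism $H\circ G\cong F$.

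Finally I would assemble these data into a retract diagram of exactly the shape appearing in the definition of the model structure, with $f=F$ as the outer vertical maps and $g=G$ as the middle vertical map. The two horizontal maps on top are both $\mathrm{id}_{\mathcal{C}}$, so that the upper retraction $\mathrm{id}_{\mathcal{C}}\circ \mathrm{id}_{\mathcal{C}}=\mathrm{id}_{\mathcal{C}}$ holds strictly; the two horizontal maps on the bottom are $L$ and $H$, with $\nu_2$ witnessing $H\circ L\cong \mathrm{id}_{\mathcal{D}}$; the back-left square is filled by $\nu_1$ (giving $G\cong L\circ F$) and the back-right square by the factorisation isomorphism (giving $F\cong H\circ G$). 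The front face carries only the identity arrows together with $F$, and so commutes strictly as $F=F$.

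The main point to verify — and the only place where the higher-dimensional bookkeeping has content — is that this prism is \emph{filled with the identity $3$-cell} in the sense demanded by the retract axiom, that is, that $\nu_1$, $\nu_2$ and the factorisation isomorphism paste together trivially on the front face. This is exactly the compatibility condition that the lift satisfies by the definition of $J$-$inj$ (the equation $H\nu_1\circ \nu_2 F=\eta$ for the lift), so the required coherence is handed to us for free by the lifting property rather than having to be computed by hand. Once this is recorded, the retract diagram is complete and $F$ is exhibited as a retract of $G\in J$-cell, as claimed.
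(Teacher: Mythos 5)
Your proof is correct and takes exactly the paper's route: factor $F\cong HG$ with $G\in J$-cell and $H$ a fibration via the factorisation theorem, then use the left lifting property of the trivial cofibration $F$ against $H$ in the square whose top edge is $G$ and whose bottom edge is $\mathrm{id}_{\mathcal{D}}$, so that the lift $L$ with its coherence $2$-cells $\nu_1$, $\nu_2$ exhibits $F$ as a retract of $G$. The paper states this in two lines and leaves the assembly of the retract diagram implicit; you spell out the $2$-cell bookkeeping, but the substance is identical.
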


\begin{proof}
We can write $F\cong HG$ where $G$ is the strict transfinite composition of pushouts from $J$ and $H$ is a fibration. Then there is a lifting in 
\[\begin{tikzcd}
	{\mathcal{C}} && {\mathcal{D}'} \\
	\\
	{\mathcal{D}} && {\mathcal{D}}
	\arrow["F"', from=1-1, to=3-1]
	\arrow[""{name=0, anchor=center, inner sep=0}, "G", from=1-1, to=1-3]
	\arrow["H", from=1-3, to=3-3]
	\arrow[""{name=1, anchor=center, inner sep=0}, Rightarrow, no head, from=3-1, to=3-3]
	\arrow[dashed, from=3-1, to=1-3]
	\arrow[shorten <=18pt, shorten >=18pt, Rightarrow, from=0, to=3-1]
	\arrow[shorten <=22pt, shorten >=13pt, Rightarrow, from=1-3, to=1]
\end{tikzcd}\]
which exhibits $F$ as the retract of $G$.
\end{proof}

\begin{theorem}
$J$-$cof$$=W$
\end{theorem}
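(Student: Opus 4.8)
The plan is to establish the two inclusions separately, using the small object argument and the reflection $\mathbf{R}$ into pretoposes. First I would prove $J\text{-}cof \subseteq W$. Since $W$ is closed under transfinite composition, 2-pushouts and retracts (Corollary \ref{wclosed}), and $J\text{-}cof$ is the retract-closure of $J\text{-}cell$ (by the standard retract argument, already invoked in Proposition \ref{retract}), it suffices to check that the two generators $M_{A/R}$ and $M_{cov}$ themselves lie in $W$. For this I would apply $\mathbf{R}$ and show that $\mathbf{R}M_{A/R}$ and $\mathbf{R}M_{cov}$ are equivalences; by the reformulated conceptual completeness theorem this is exactly the statement that the generators are weak equivalences. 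Concretely, $\mathcal{C}_{A/R}$ is obtained from $\mathcal{C}_R$ by freely adjoining a quotient of the equivalence relation $R$, and $\mathcal{C}_{cov}$ from $\mathcal{C}_{A\cap B}$ by freely adjoining a pushout (disjoint-cover) along the common subobject $S$. Both of these operations become \emph{equivalences} after pretopos completion, because pretoposes already have quotients by equivalence relations and effective/disjoint unions, so the freely adjoined data already exist up to canonical isomorphism in $\mathbf{R}\mathcal{C}_R$ and $\mathbf{R}\mathcal{C}_{A\cap B}$ respectively. This is the conceptual heart of the forward inclusion and the step I expect to require the most care.

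For the reverse inclusion $W \subseteq J\text{-}cof$, I would use the small object argument together with the retract trick. Given $M \in W$, factor it by Theorem \ref{smallforcoh} (applied to $I = J$) as $M \cong G F$ with $F \in J\text{-}cell \subseteq J\text{-}cof$ and $G \in J\text{-}inj$. By the forward inclusion just proved, $F \in W$, so by the 2-for-3 property of $W$ (it is a Morita-equivalence class, manifestly closed under the relevant compositions) we get $G \in W$. The key intermediate claim is then that $G \in W \cap J\text{-}inj$ implies $G$ is already an equivalence, equivalently a retract of an identity; granting this, $M \cong GF$ exhibits $M$ as a retract of $F \in J\text{-}cof$, and since $J\text{-}cof$ is retract-closed we conclude $M \in J\text{-}cof$.

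The crux of the reverse direction is therefore the lemma that a fibration which is a weak equivalence is an equivalence of categories. I would prove this by showing that $G \in J\text{-}inj$ forces the codomain to ``have'' the structure that $G$'s lifting property detects: the right lifting property against $M_{A/R}$ says exactly that every equivalence relation in the domain that maps to a quotient in the codomain is already effectively quotiented compatibly, and the lifting against $M_{cov}$ says the analogous statement for disjoint covers. Combined with $G$ being a Morita-equivalence (so that $\mathbf{R}G$ is an equivalence by conceptual completeness), these lifting properties should upgrade the bijection on $\mathbf{Set}$-models to an honest equivalence $\mathcal{C} \simeq \mathcal{D}$, essentially because $J\text{-}inj$ pins down the pretopos constructions on the nose rather than merely up to Morita equivalence. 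The main obstacle I anticipate is verifying this upgrade carefully: one must check that the two chosen generators suffice to detect \emph{all} the pretopos-completion data, i.e.\ that no further generating trivial cofibrations are needed for a weak-equivalence fibration to be an equivalence, which is where the specific design of $J$ in Definition \ref{jdef} does the real work.
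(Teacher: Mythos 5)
Your proposal is correct in outline, and while the inclusion $J$-$cof\subseteq W$ follows the paper exactly (reduce to $J\subseteq W$ via Corollary \ref{wclosed} and Proposition \ref{retract}), your converse takes a genuinely different route. For the generators, the paper simply computes $\mathbf{Set}$-models: a coherent functor $\mathcal{C}_R\to\mathbf{Set}$ is a set with an equivalence relation, and its extension to $\mathcal{C}_{A/R}$ is forced to be the quotient, so precomposition is an isomorphism of model categories; since $W$ is defined by restriction along $\mathbf{Set}$-models, no detour through $\mathbf{R}$ is needed, though your version of the argument (the same forcing computation run in an arbitrary pretopos $\mathcal{S}$, plus the universal property of $\mathbf{R}$) is also valid. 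The real divergence is in $W\subseteq J$-$cof$: the paper does not factor-and-cancel. Given a weak equivalence $H:\mathcal{C}\to\mathcal{D}$ it constructs, by transfinite recursion over a well-ordering of the objects of $\mathcal{D}$, an explicit $J$-cell map $F_\lambda$ out of $\mathcal{C}$ (attaching cells along $M_{A/R}$ to create image quotients and along $M_{cov}$ to create unions) so that the induced $G_\lambda$ to $\mathcal{D}$ is essentially surjective; being a weak equivalence by 2-out-of-3, hence fully faithful by Makkai, $G_\lambda$ is an equivalence and $H\cong G_\lambda F_\lambda\in J$-$cof$. Your route instead factors $M\cong GF$ by Theorem \ref{smallforcoh} and rests everything on the lemma that $G\in W\cap J$-$inj$ is an equivalence.

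Two caveats about that lemma. First, beware the circularity you correctly sensed: the paper's corollary ``every map in $W\cap J$-$inj$ is an equivalence'' is \emph{deduced from} $W=J$-$cof$ (via self-lifting), so you cannot cite it; you need an independent proof, as you propose. Second, your sketch of that proof is where all the mathematical content of the theorem now sits, and executing it requires precisely Makkai's structural facts about weak equivalences (fully faithful, full with respect to subobjects, and every object of the codomain finitely covered by the domain), which your sketch leaves implicit under ``Morita-equivalence.'' With those in hand the argument does go through: transport the kernel pair of each covering map $p_k:G(z_k)\to d$ to an equivalence relation upstairs, use lifting against $M_{A/R}$ to put each image $\exists_{p_k}G(z_k)$ in the essential image of $G$, and lift against $M_{cov}$ inductively on binary unions to put $d$ itself there; indeed the paper carries out exactly this argument later, in the proposition characterizing fibrations by the 2-pullback square along $\rho$. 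So your decomposition trades the paper's explicit cell-attachment for a cancellation lemma proved by lifting: the paper's version yields the concrete cofinal $J$-cell factorization as a byproduct, while yours gives a cleaner formal skeleton at the price of concentrating the real work in a lemma whose proof you have only sketched.
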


\begin{proof}
$\subseteq $ By Corollary \ref{wclosed} it is enough to prove $J\subseteq W$. Let $N:\mathcal{C}_{R}\to \mathbf{Set}$ be a coherent functor. Then $NM_0$ gives a set $N(A)$ with an equivalence relation $N(R)$ on it. If we want to extend this to an $L_{A/R}$-structure which is a model of the additional axioms we are forced to interpret $p$ as the coequalizer of $\restr{\pi _1}{N(R)}$ and $\restr{\pi _2}{N(R)}$. A homomorphism of $L_R$-structures (which are models of the axioms on $R$) is a commutative diagram
\[\begin{tikzcd}
	{N(R)} && {N(A)\times N(A)} \\
	& {=} \\
	{N'(R)} && {N'(A)\times N'(A)}
	\arrow[hook, from=1-1, to=1-3]
	\arrow[hook, from=3-1, to=3-3]
	\arrow["{h\times h}"{description}, from=1-3, to=3-3]
	\arrow[dashed, from=1-1, to=3-1]
\end{tikzcd}\]
which induces a unique map between the coequalizers. The case of $M_{cov}$ is analogous.

$\supseteq $ Let $H:\mathcal{C}\to \mathcal{D}$ be a weak equivalence. We know from \cite{makkai} that $H$ is fully faithful, conservative, full wrt.~subobjects and each object $x\in \mathcal{D}$ is finitely covered by $\mathcal{C}$ via $H$, i.e.~there are objects $y_1,\dots y_n$ in $\mathcal{C}$, subobjects $b_k\hookrightarrow H(y_k)$ and maps $p_k:b_k\to x$ such that $x=\bigcup \exists _{p_k}b_k$. Since $H$ is full wrt.~subobjects we can take $b_k=H(y_k)$.

Let $(d_i)_{i<\lambda }$ be a well-ordering of objects of $\mathcal{D}$. By transfinite recursion we will give factorisations $H\cong \mathcal{C}\xrightarrow{F_i}\mathcal{D}_i\xrightarrow{G_i}\mathcal{D}$ such that $F_i\in J$-cell and for each $j<i$ $d_j$ is contained in the essential image of $G_i$. This is sufficient as $J-cell \subseteq W $ so by the 2-out-3 property $G_{\lambda }$ is an essentially surjective weak equivalence, i.e.~an equivalence.

We take $\mathcal{D}_0=\mathcal{C}, F_0=1_{\mathcal{C}}, G_0=h$. When $i$ is a limit ordinal let $F_i$ be the transfinite composition of $(F_j)_{j<i}$ and $G_i$ be the induced map. Now assume that $\mathcal{C}\xrightarrow{F_i}\mathcal{D}_i\xrightarrow{G_i}\mathcal{D}$ is given. We can find $y_1,\dots y_n$ in $\mathcal{D}_i$ such that in 
\[\begin{tikzcd}
	{G_i(y_1)} \\
	\dots && {d_i} \\
	{G_i(y_n)}
	\arrow["{p_1}"{description}, from=1-1, to=2-3]
	\arrow["{p_n}"{description}, from=3-1, to=2-3]
\end{tikzcd}\]
$d_i=\bigcup _k \exists _{p_k}G_i(y_k)$. First we glue the quotient maps $q_i:G_i(y_k)\twoheadrightarrow \exists _{p_k}G_i(y_k)$ to $\mathcal{D}_i$, i.e.~take the 2-pushout and the induced map in
\[\begin{tikzcd}
	{\mathcal{C}_R} & {\mathcal{D}_i} \\
	{\mathcal{C}_{A/R}} & {\mathcal{D}_i^1} \\
	&&& {\mathcal{D}}
	\arrow["{M_{A/R}}"', from=1-1, to=2-1]
	\arrow[from=1-1, to=1-2]
	\arrow[from=1-2, to=2-2]
	\arrow[from=2-1, to=2-2]
	\arrow[curve={height=6pt}, from=2-1, to=3-4]
	\arrow["{G_i}", curve={height=-6pt}, from=1-2, to=3-4]
	\arrow["{G_i^1}"{description}, dashed, from=2-2, to=3-4]
\end{tikzcd}\]
where $R\hookrightarrow A$ is mapped to the subobject $R'\hookrightarrow y_1$ whose $G_i$-image is the kernel pair of $q_1$ (and which is an equivalence relation as $G_i$ is bijective on the subobject lattices) and $A\xrightarrow{p}B$ is mapped to $q_1$. The dashed arrow is a weak equivalence and $q_1$ lies in its essential image. Iterating it $n$ times yields a factorisation $H\cong \mathcal{C}\xrightarrow{F_i^n}\mathcal{D}_i^n\xrightarrow{G_i^n} \mathcal{D}$ with $F_i^n\in J$-cell and with $(d_j)_{j<i}$ and $\exists _{p_1}G_i(y_1),\dots \exists _{p_n}G_i(y_n)$  all lying in the essential image of $G_i^n $. 

Using that $G_i^n$ is a weak equivalence and hence it is full wrt.~subobjects and it is fully faithful we have a covering of $d_i$
\[\begin{tikzcd}
	&& {d_i} \\
	{G_i^n(z_1)} && {G_i^n(z_2)} && \dots \\
	{G_i^n(z_{12})} & {G_i^n(z_1)\cap G_i^n(z_2)} & {G_i^n(z_{12}')}
	\arrow["{G_i^n(j_{12})}", hook, from=3-1, to=2-1]
	\arrow[hook, from=2-1, to=1-3]
	\arrow[hook, from=2-3, to=1-3]
	\arrow[hook', from=3-2, to=2-1]
	\arrow[hook, from=3-2, to=2-3]
	\arrow["\cong", from=3-1, to=3-2]
	\arrow["\cong", from=3-2, to=3-3]
	\arrow["{G_i^n(j_{12}')}"', hook, from=3-3, to=2-3]
	\arrow["{G_i^n(\varphi)}"{description}, curve={height=12pt}, from=3-1, to=3-3]
\end{tikzcd}\]
Now we just glue the unions of the elements of the cover to $\mathcal{D}_i^n$. Take the pushout and the induced map in 
\[\begin{tikzcd}
	{\mathcal{C}_{A\cap B}} & {\mathcal{D}_i^n} \\
	{\mathcal{C}_{cov}} & {\mathcal{D}_i^{n+1}} \\
	&&& {\mathcal{D}}
	\arrow["{M_{cov}}"', from=1-1, to=2-1]
	\arrow[from=1-1, to=1-2]
	\arrow[from=1-2, to=2-2]
	\arrow[from=2-1, to=2-2]
	\arrow[curve={height=6pt}, from=2-1, to=3-4]
	\arrow["{G_i^n}", curve={height=-6pt}, from=1-2, to=3-4]
	\arrow["{G_i^{n+1}}"{description}, dashed, from=2-2, to=3-4]
\end{tikzcd}\]
where $A\leftarrow S\to B$ is sent to $z_1\xleftarrow{j_{12}}z_{12}\xrightarrow{j'_{12}\varphi }z_2$ and $A\to X\leftarrow B$ is mapped to $G_i^n(z_1)\to d_i \leftarrow G_i^n(z_2)$. It follows that $d_i$ is covered by $n-1$ elements from the essential image of $G_i^{n+1}$. We can take $\mathcal{D}_{i+1}=\mathcal{D}_i^{2n-1}$, $F_{i+1}=F_i^{2n-1}$ and $G_{i+1}=G_i^{2n-1}$.
\end{proof}

\begin{corollary}
Every map $F\in W\cap J$-$inj$ is an equivalence.
\end{corollary}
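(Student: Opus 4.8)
The plan is to combine the identification $J$-$cof = W$, just established, with the definitional fact that $J$-$cof = (J\text{-}inj)$-$proj$, i.e.\ that members of $J$-$cof$ are precisely the maps with the left lifting property against every fibration. Since $F \in W = J$-$cof$, the functor $F$ has the left lifting property against each member of $J$-$inj$; but by hypothesis $F$ itself belongs to $J$-$inj$, so $F$ has the left lifting property \emph{against itself}. The whole argument rests on exploiting this self-lifting: it is the $(2,1)$-analogue of the classical observation that a map lying in both classes of a weak factorisation system is invertible, which here degenerates into an equivalence rather than a strict isomorphism.

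Concretely, I would form the lifting square whose two vertical edges are both $F$, whose top and bottom edges are $1_{\mathcal{C}}$ and $1_{\mathcal{D}}$, and whose mediating $2$-cell is $\eta = \mathrm{id}_F$; this is legitimate since $1_{\mathcal{D}}\circ F = F = F\circ 1_{\mathcal{C}}$, so the square commutes strictly. Applying the left lifting property of $F$ (as an element of $J$-$cof$) against $F$ (as an element of $J$-$inj$) produces a coherent functor $k:\mathcal{D}\to\mathcal{C}$ together with $2$-cells $\nu_1 : kF \Rightarrow 1_{\mathcal{C}}$ and $\nu_2 : 1_{\mathcal{D}} \Rightarrow Fk$ satisfying the pasting identity $F\nu_1 \circ \nu_2 F = \mathrm{id}_F$. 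Because every $2$-cell of $\mathbf{C}$ is invertible, $\nu_1$ and $\nu_2$ are automatically isomorphisms, so they exhibit $k$ as simultaneously a left and a right quasi-inverse of $F$. Hence $F$ is an equivalence, with quasi-inverse $k$, which is the desired conclusion.

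There is no genuine obstacle here; the only point demanding care is the bookkeeping of the lifting data. One must substitute $g = f = F$, $h = 1_{\mathcal{C}}$, $h' = 1_{\mathcal{D}}$ into the square defining $J$-$inj$ and verify that the $2$-cells supplied really have sources and targets $kF \Rightarrow 1_{\mathcal{C}}$ and $1_{\mathcal{D}} \Rightarrow Fk$ after this substitution, and that the pasting condition reduces to $F\nu_1 \circ \nu_2 F = \mathrm{id}_F$. This is a direct unwinding of the definitions, and the entire substance of the proof is contained in the self-lifting observation of the first paragraph.
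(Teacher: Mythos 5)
Your proposal is correct and is exactly the paper's argument: the paper likewise uses $W=J\text{-}cof$ to lift in the square with $F$ as both vertical edges and identities as the horizontal edges, obtaining a quasi-inverse from the lift. Your write-up simply spells out the bookkeeping of the $2$-cells $\nu_1, \nu_2$ that the paper leaves implicit.
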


\begin{proof}
As $W=J$-$cof$ we have a lift in
\[\begin{tikzcd}
	\bullet & \bullet \\
	\bullet & \bullet
	\arrow["F"', from=1-1, to=2-1]
	\arrow[Rightarrow, no head, from=1-1, to=1-2]
	\arrow["F", from=1-2, to=2-2]
	\arrow[Rightarrow, no head, from=2-1, to=2-2]
	\arrow[dashed, from=2-1, to=1-2]
\end{tikzcd}\]
\end{proof}

Following the proof of Proposition 2.3.~in \cite{torsion} we get:

\begin{proposition}
The lift in the square
\[\begin{tikzcd}
	{\mathcal{A}} && {\mathcal{C}} \\
	\\
	{\mathcal{B}} && {\mathcal{D}}
	\arrow["F"', from=1-1, to=3-1]
	\arrow[""{name=0, anchor=center, inner sep=0}, "{H_1}", from=1-1, to=1-3]
	\arrow["G", from=1-3, to=3-3]
	\arrow[""{name=1, anchor=center, inner sep=0}, "{H_2}"', from=3-1, to=3-3]
	\arrow["K"{description}, from=3-1, to=1-3]
	\arrow["\alpha", shorten <=18pt, shorten >=13pt, Rightarrow, from=3-1, to=0]
	\arrow["\beta"', shorten <=13pt, shorten >=18pt, Rightarrow, from=1, to=1-3]
\end{tikzcd}\]
with $F\in W$, $G\in J$-$inj$ is essentially unique: given $K':\mathcal{B}\to \mathcal{C}$ and $\alpha '$, $\beta '$ with $\beta F \circ G\alpha =\beta 'F \circ G \alpha '$ there is a unique natural isomorphism $\gamma :K\Rightarrow K'$ for which $\alpha =\gamma F \circ \alpha '$ and $\beta '=G\gamma \circ \beta $.
\end{proposition}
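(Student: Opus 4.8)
The plan is to compress the two lifts into a single compatible pair of $2$-cells and then solve one honest lifting problem built from a cotensor with the free-living isomorphism.

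First I would repackage the data. Writing the two lifts as $(K,\alpha,\beta)$ and $(K',\alpha',\beta')$ and using that all $2$-cells are invertible, I form comparison isomorphisms $\theta\colon KF\Rightarrow K'F$ (assembled from $\alpha$ and $\alpha'$) over $\mathcal{A}$ and $\psi\colon GK\Rightarrow GK'$ (assembled from $\beta$ and $\beta'$) over $\mathcal{B}$. A short whiskering computation — which is exactly where the hypothesis $\beta F\circ G\alpha=\beta'F\circ G\alpha'$ enters, the common filler of the original square cancelling — shows that $\theta$ and $\psi$ are compatible in the sense that $G\theta=\psi F$ as $2$-cells $GKF\Rightarrow GK'F$. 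With this translation the whole statement becomes: there is a unique natural isomorphism $\gamma\colon K\Rightarrow K'$ with $\gamma F=\theta$ and $G\gamma=\psi$, the two displayed equations being recovered by unwinding the definitions of $\theta$ and $\psi$.

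Next I would turn ``produce $\gamma$'' into a genuine square. Being $2$-complete, $\mathbf{Coh}_{\sim}$ admits the cotensor $\mathcal{C}^{\mathbf{I}}$ by the free-living isomorphism $\mathbf{I}$; maps $\mathcal{Y}\to\mathcal{C}^{\mathbf{I}}$ are precisely natural isomorphisms between two functors $\mathcal{Y}\to\mathcal{C}$, and the two endpoint evaluations assemble into $(d_0,d_1)\colon\mathcal{C}^{\mathbf{I}}\to\mathcal{C}\times\mathcal{C}$. Then $\theta$ is a map $\bar\theta\colon\mathcal{A}\to\mathcal{C}^{\mathbf{I}}$ over $(KF,K'F)$, while $\psi$ together with $(K,K')$ gives a map $\mathcal{B}\to\mathcal{D}^{\mathbf{I}}\times_{\mathcal{D}\times\mathcal{D}}(\mathcal{C}\times\mathcal{C})$, and the compatibility $G\theta=\psi F$ is exactly what makes the square with left edge $F$ and right edge the Leibniz cotensor map $P:=\bigl(G^{\mathbf{I}},(d_0,d_1)\bigr)\colon\mathcal{C}^{\mathbf{I}}\to\mathcal{D}^{\mathbf{I}}\times_{\mathcal{D}\times\mathcal{D}}(\mathcal{C}\times\mathcal{C})$, induced by $G$ and the boundary inclusion $\partial\mathbf{I}\hookrightarrow\mathbf{I}$, commute with identity filler. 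A lift in this square is the same thing as a $\gamma$ with $\gamma F=\theta$ and $G\gamma=\psi$.

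Since $F\in W=J$-$cof$ has the left lifting property against every member of $J$-$inj$, existence of $\gamma$ follows as soon as $P\in J$-$inj$, and establishing this is the main obstacle. I would prove it through the $2$-categorical cotensor--copower adjunction: $P$ has the right lifting property against a map $m$ iff $G$ does against the Leibniz copower of $m$ with $\partial\mathbf{I}\hookrightarrow\mathbf{I}$. Taking $m\in J$ this reduces the claim to showing that the two Leibniz copowers of $M_{A/R}$ and $M_{cov}$ with $\partial\mathbf{I}\hookrightarrow\mathbf{I}$ lie in $W$ — which I would deduce from the fact that every object is cofibrant together with the closure of $W$ under pushout and transfinite composition (Corollary \ref{wclosed}) — for then $G\in J$-$inj$ lifts against them by definition of $J$-$cof$. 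Uniqueness of $\gamma$ is then obtained by running the identical argument one categorical dimension higher: two candidate isomorphisms differ by an automorphism of $K$ trivialised by both $F$ and $G$, and filling the corresponding square against the Leibniz cotensor of $G$ with the relevant higher boundary inclusion of shapes — again in $J$-$inj$ by the same lemma — forces that automorphism to be the identity. The technical heart throughout is precisely this stability lemma, that the Leibniz cotensor of a boundary inclusion against a fibration is again a fibration.
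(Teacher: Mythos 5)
Your existence argument is correct, but it is best described as the adjoint transposition of the paper's own proof rather than a genuinely different one. Under the tensor--cotensor (Leibniz) adjunction, a lift of $F$ against your map $P=(G^{\mathbf{I}},(d_0,d_1))$ corresponds exactly to a lift of the pushout-product of $F$ with $\partial\mathbf{I}\hookrightarrow\mathbf{I}$ against $G$, and that pushout-product is nothing other than the codiagonal $F'\colon \mathcal{B}\sqcup_{\mathcal{A}}\mathcal{B}\to\mathcal{B}$ of the 2-pushout of $F$ along itself --- which is precisely the map the paper constructs (its $\mathcal{B}'$, $I_1$, $I_2$, $F'$) and then lifts against $G$. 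The key lemma is likewise the same on both sides: the codiagonal of a map of $W$ lies in $W$ because the coprojection $I_1$ is a 2-pushout of $F$ (Corollary \ref{wclosed}) and $F'I_1\cong 1_{\mathcal{B}}$, so 2-out-of-3 applies. Note that this, and not ``every object is cofibrant'' or closure under transfinite composition, is what your claim that the Leibniz copowers of $J$-maps lie in $W$ actually rests on. What your formulation buys is a reusable stability statement; what it costs is setting up cotensors and the pseudo-Leibniz adjunction (with all lifts and fillers only defined up to coherent isomorphism), which the paper avoids by working with the 2-pushout directly.

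The uniqueness step, however, has a genuine gap. The ``relevant higher boundary inclusion'' has to be the quotient $q\colon \mathbf{I}\sqcup_{\partial\mathbf{I}}\mathbf{I}\to\mathbf{I}$ identifying the two freely adjoined isomorphisms, and its domain is \emph{not} contractible: $\mathbf{I}\sqcup_{\partial\mathbf{I}}\mathbf{I}$ is equivalent to the one-object groupoid on an infinite cyclic group. Consequently ``the same lemma'' does not apply verbatim. For $m\colon\mathcal{X}\to\mathcal{Y}$ in $J$, the pushout-product of $m$ with $q$ has domain $\mathcal{Y}\otimes(\mathbf{I}\sqcup_{\partial\mathbf{I}}\mathbf{I})\sqcup_{\mathcal{X}\otimes(\mathbf{I}\sqcup_{\partial\mathbf{I}}\mathbf{I})}\mathcal{X}$, and the retraction/2-out-of-3 trick now needs the extra input that $(-)\otimes(\mathbf{I}\sqcup_{\partial\mathbf{I}}\mathbf{I})$ preserves $W$; beware that the collapse map $\mathcal{X}\otimes(\mathbf{I}\sqcup_{\partial\mathbf{I}}\mathbf{I})\to\mathcal{X}$ is genuinely \emph{not} in $W$ (its pretopos completion classifies models equipped with an automorphism), so only the coprojection from $\mathcal{X}$ can be used. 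The missing input is true --- $\mathbf{R}$ is a left 2-adjoint, hence preserves tensors, and tensoring an equivalence of pretoposes with any groupoid is an equivalence --- but you never invoke it, and without it the uniqueness half is unproven. You also leave implicit the check that a lift against this $q$-Leibniz map forces the discrepancy automorphism $\delta$ of $K$ (the one with $\delta F=1$ and $G\delta=1$) to be the identity; this requires the short computation that an isomorphism from the pair $(1_K,\delta)$ to a pair of equal isomorphisms forces $\delta=1$. By contrast, the paper obtains uniqueness for free from the uniqueness clause in the 2-dimensional universal property of the 2-pushout $\mathcal{B}\sqcup_{\mathcal{A}}\mathcal{B}$, with no second lifting argument at all.
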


\begin{proof}
The 2-cells
\[\begin{tikzcd}
	{\mathcal{A}} & {\mathcal{B}} &&&& {\mathcal{A}} & {\mathcal{B}} \\
	{\mathcal{B}} & {\mathcal{B}'} && {\mathcal{B}} & {\mathcal{D}} & {\mathcal{B}} & {\mathcal{B}'} && {\mathcal{C}} & {\mathcal{D}}
	\arrow["F"', from=1-1, to=2-1]
	\arrow[""{name=0, anchor=center, inner sep=0}, "F", from=1-1, to=1-2]
	\arrow["{I_1}", from=1-2, to=2-2]
	\arrow[""{name=1, anchor=center, inner sep=0}, "{I_2}", from=2-1, to=2-2]
	\arrow[""{name=2, anchor=center, inner sep=0}, "{1_{\mathcal{B}}}"'{pos=0.8}, curve={height=24pt}, from=2-1, to=2-4]
	\arrow[""{name=3, anchor=center, inner sep=0}, "{1_{\mathcal{B}}}", curve={height=-6pt}, from=1-2, to=2-4]
	\arrow[""{name=4, anchor=center, inner sep=0}, "{F'}"{description}, dashed, from=2-2, to=2-4]
	\arrow["{H_2}", from=2-4, to=2-5]
	\arrow[""{name=5, anchor=center, inner sep=0}, "F", from=1-6, to=1-7]
	\arrow["F"', from=1-6, to=2-6]
	\arrow["{I_1}", from=1-7, to=2-7]
	\arrow[""{name=6, anchor=center, inner sep=0}, "{I_2}", from=2-6, to=2-7]
	\arrow[""{name=7, anchor=center, inner sep=0}, "{K'}"{description}, curve={height=18pt}, from=2-6, to=2-9]
	\arrow[""{name=8, anchor=center, inner sep=0}, "K"{description}, curve={height=-6pt}, from=1-7, to=2-9]
	\arrow[""{name=9, anchor=center, inner sep=0}, "R"{description}, dashed, from=2-7, to=2-9]
	\arrow["G", from=2-9, to=2-10]
	\arrow[""{name=10, anchor=center, inner sep=0}, "{H_2}", curve={height=-12pt}, from=1-7, to=2-10]
	\arrow[""{name=11, anchor=center, inner sep=0}, "{H_2}"', curve={height=30pt}, from=2-6, to=2-10]
	\arrow["push"{description}, Rightarrow, draw=none, from=0, to=1]
	\arrow["push"{description}, Rightarrow, draw=none, from=5, to=6]
	\arrow["{\varepsilon_1}"{description}, Rightarrow, draw=none, from=3, to=2-2]
	\arrow["{\mu_1}"{description}, Rightarrow, draw=none, from=8, to=2-7]
	\arrow["{\mu_2}"{description, pos=0.4}, Rightarrow, draw=none, from=9, to=7]
	\arrow["{\varepsilon _2}"{description}, Rightarrow, draw=none, from=4, to=2]
	\arrow["\beta"{description}, Rightarrow, draw=none, from=10, to=2-9]
	\arrow["{\beta '}"{description}, Rightarrow, draw=none, from=2-9, to=11]
\end{tikzcd}\]
are identical, i.e.~we have $H_2F\xRightarrow{\beta 'F} GK'F \xRightarrow{G \alpha ^{-1} \circ G \alpha '}GKF \xRightarrow{\beta ^{-1}F} H_2F$ equals $1_{H_2F}$. By the universal property of the pushout there is a unique 2-cell
\[\begin{tikzcd}
	{\mathcal{B}'} && {\mathcal{C}} \\
	\\
	{\mathcal{B}} && {\mathcal{D}}
	\arrow["{F'}"', from=1-1, to=3-1]
	\arrow["R", from=1-1, to=1-3]
	\arrow["G", from=1-3, to=3-3]
	\arrow["{H_2}"', from=3-1, to=3-3]
	\arrow["\eta"{description}, shorten <=11pt, shorten >=11pt, Rightarrow, from=3-1, to=1-3]
\end{tikzcd}\]
with $H_2\xRightarrow{\beta } GK \xRightarrow{ G\mu _1}GRI_1 \xRightarrow{\eta ^{-1}I_1}H_2F'I_1$ equal to $H_2\varepsilon _1$ and with $H_2\xRightarrow{\beta '}GK \xRightarrow{G\mu _2} GRI_2\xRightarrow{ \eta ^{-1}I_2}H_2F'I_2 $ equal to $H_2\varepsilon _2$.

As weak equivalences are closed under 2-pushouts and by the 2-for-3 property the map $F'$ is a weak equivalence, hence we have a lift
\[\begin{tikzcd}
	{\mathcal{B}'} && {\mathcal{C}} \\
	\\
	{\mathcal{B}} && {\mathcal{D}}
	\arrow[""{name=0, anchor=center, inner sep=0}, "R", from=1-1, to=1-3]
	\arrow["G", from=1-3, to=3-3]
	\arrow[""{name=1, anchor=center, inner sep=0}, "{H_2}"', from=3-1, to=3-3]
	\arrow["L"{description}, from=3-1, to=1-3]
	\arrow["{F'}"', from=1-1, to=3-1]
	\arrow["{\nu _1}", shorten <=18pt, shorten >=13pt, Rightarrow, from=3-1, to=0]
	\arrow["{\nu _2}"', shorten <=18pt, shorten >=13pt, Rightarrow, from=1, to=1-3]
\end{tikzcd}\]
with $G\nu _1 \circ \nu _2 F'= \eta $.

Then we have isomorphisms:
\[
\gamma : K\xRightarrow{\mu _1} RI_1 \xRightarrow{\nu _1^{-1}I_1} LF'I_1 \xRightarrow{L\varepsilon _1^{-1}} L \xRightarrow{L\varepsilon _2} LF'I_2 \xRightarrow{\nu _1 I_2} RI_2 \xRightarrow{\mu _2^{-1}} K'
\]
for which $\alpha =\gamma F \circ \alpha '$ and $\beta '=G\gamma \circ \beta $ is easily checked.

Given $\gamma '$ with these properties we get $\delta \neq \delta ':K\Rightarrow L$ then $\eta \neq G\eta _1 \circ G \delta ^{-1}\delta ' F' \circ \nu _2 F'$ are both compatible 2-cells which contradicts the universal property of the 2-pushout.
\end{proof}

\begin{proposition}
Assume $GF\cong H$. Then if $G,H$ are fibrations, the map $F$ is also a fibration.
\label{cancellation}
\end{proposition}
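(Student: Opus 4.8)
The plan is to verify directly that $F$ lies in $J$-$inj$, exploiting two facts from above: that $J\subseteq J$-$cof=W$, and that the essential-uniqueness statement for lifts established just before this proposition lets us compare parallel lifts along a fibration. Write $F:\mathcal{X}\to\mathcal{Y}$ and $G:\mathcal{Y}\to\mathcal{Z}$, so that $GF\cong H$ is a fibration. Fix $u:\mathcal{A}\to\mathcal{B}$ in $J$ and a lifting square for $F$, i.e.\ maps $a:\mathcal{A}\to\mathcal{X}$, $b:\mathcal{B}\to\mathcal{Y}$ together with a comparison $2$-cell $\eta$ between $Fa$ and $bu$. Post-composing the whole square with $G$ turns it into a lifting problem for $GF$ against $u$, with filler $G\eta$. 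Since $GF\cong H\in J$-$inj$ and $u\in J$, I obtain a lift $c:\mathcal{B}\to\mathcal{X}$ together with $2$-cells $\nu_2:cu\Rightarrow a$ and $\tau$ between $Gb$ and $GFc$ whose pasting equals $G\eta$. Thus the \emph{underlying $1$-cell} $c$ of the desired lift is already produced by $GF$; what is missing is a $2$-cell between $b$ and $Fc$ lying over $\tau$.

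To supply this missing $2$-cell I observe that $b$ and $Fc$ are two solutions of one and the same lifting problem for $u\in W$ against the fibration $G$: namely the square whose top edge is $Fa:\mathcal{A}\to\mathcal{Y}$, whose bottom edge is $Gb:\mathcal{B}\to\mathcal{Z}$, and whose filler is $G\eta$. Indeed $b$ solves it by means of $\eta$ together with the identity $2$-cell on $Gb$, while $Fc$ solves it by means of $F\nu_2$ together with $\tau$; the requirement that these two solutions induce the same composite $2$-cell between $GFa$ and $Gbu$ is exactly the pasting identity recorded for the lift $c$. Since $u\in J\subseteq W$ and $G\in J$-$inj$, the essential-uniqueness proposition applies and yields a unique natural isomorphism $\gamma:b\Rightarrow Fc$ with $G\gamma=\tau$ and with $\gamma u$ compatible with $\eta$ and $F\nu_2$ (these are the two equations $\alpha=\gamma u\circ\alpha'$ and $\beta'=G\gamma\circ\beta$ in the notation of that proposition).

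Finally I assemble $c$, together with $\nu_2$ and $\nu_1:=\gamma^{-1}$, into a lift of the original square against $F$, and read off from the two compatibility equations above that the pasting law $F\nu_1\circ\nu_2 u=\eta$ holds. This exhibits the required right lifting property of $F$ against every $u\in J$, so $F\in J$-$inj$ is a fibration. The main obstacle is precisely this last bookkeeping step: one must keep track of the orientation of all the $2$-cells and confirm that the equations pinning down $\gamma$ unpack exactly into the pasting condition defining a lift against $F$. The conceptual content is entirely carried by the observation that $GF$ furnishes the $1$-cell $c$ while the preceding proposition both furnishes and rigidifies the $2$-cell along $G$; everything else is a routine (if delicate) diagram chase of natural isomorphisms.
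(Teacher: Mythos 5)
Your proof is correct and follows essentially the same route as the paper: lift the $G$-composed square against the fibration $GF\cong H$ to obtain the $1$-cell $c$, then invoke the preceding essential-uniqueness proposition (with left edge $u\in J\subseteq W$ and right edge the fibration $G$) to compare $b$ with $Fc$ and extract the missing $2$-cell. The paper's argument is the same construction with the roles of your $c$, $\nu_2$, $\tau$, $\gamma$ played by its $L$, $\nu_1$, $\nu_2$, $\gamma$.
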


\begin{proof}
We have a lift $L$ in
\[\begin{tikzcd}
	&& \bullet \\
	\bullet \\
	&& \bullet \\
	\bullet & {=} \\
	&& \bullet
	\arrow["{M \in J}"', from=2-1, to=4-1]
	\arrow["F", from=1-3, to=3-3]
	\arrow["G", from=3-3, to=5-3]
	\arrow["K"{description}, from=2-1, to=1-3]
	\arrow["H"{description}, from=4-1, to=3-3]
	\arrow["GH"{description}, from=4-1, to=5-3]
	\arrow["\alpha", shorten <=28pt, shorten >=28pt, Rightarrow, from=4-1, to=1-3]
	\arrow["L"{description}, curve={height=12pt}, dashed, from=4-1, to=1-3]
\end{tikzcd}\]
with isomorphisms $\nu _1:LM\Rightarrow K$, $\nu _2:GH\Rightarrow GFL$ such that $\nu _2 M \circ GF\nu _1 =G\alpha $. When seeing it as a square whose right edge is $G$ we have two liftings $FL$ with 2-cells $\nu _1$, $\nu _2$ and $H$ with 2-cells $\alpha $, $1_{GH}$. By the previous proposition we get a unique isomorphism $\gamma :H\Rightarrow FL$ with $\gamma M \circ F\nu _1=\alpha $ and $G\gamma =\nu _2$ so $(L,\nu _1, \gamma )$ is a splitting of the 2-cell $\alpha $.
\end{proof}

\begin{definition}
A \emph{path object} for a coherent category $\mathcal{C}$ is the factorisation of $\Delta :\mathcal{C}\to \mathcal{C}\times \mathcal{C}$ as $\mathcal{C}\xrightarrow{I_0} \mathcal{C}' \xrightarrow{\langle P_0,P_1 \rangle } \mathcal{C}\times \mathcal{C}$ where $I_0$ is a weak equivalence and $\langle P_0,P_1 \rangle $ is a fibration.

Let $\mathcal{C},\mathcal{D}$ be fibrant. A \emph{homotopy} between two coherent functors $F,G:\mathcal{C}\to \mathcal{D}$ is a coherent functor $K:\mathcal{C}\to \mathcal{D}'$ with $P_0K\cong F$ and $P_1K\cong G$. We say that $F$ and $G$ are homotopic (and write $F\simeq G$) if there is a homotopy between them with some path object.
\end{definition}

\begin{theorem}
There is a (2,1)-model structure on $\mathbf{Coh}_{\sim }$ with $W$ being the class of weak equivalences, $Fib=J$-$inj$ and with all maps being cofibrations. A coherent category is fibrant iff it is a pretopos. Two maps $F,G:\mathcal{C}\to \mathcal{D}$ between pretoposes are homotopic iff they are naturally isomorphic.
\label{mainthm}
\end{theorem}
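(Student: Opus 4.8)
The plan is to assemble the model-category axioms from the results already proved, then to identify the fibrant objects, and finally to compute the homotopy relation between pretoposes. Since every map is a cofibration we have $W\cap Cof=W$, and the trivial fibrations are $W\cap J\text{-}inj$. The $2$-for-$3$ axiom for $W$ is immediate for Morita-equivalences; closure under retracts holds for $W$ by Corollary~\ref{wclosed}, for $Fib=J\text{-}inj$ because right lifting classes are retract-closed, and trivially for $Cof$. The fibrations have the right lifting property against $W=J\text{-}cof$ directly from the definition of $J\text{-}cof$ together with the identification $J\text{-}cof=W$. The trivial fibrations $W\cap J\text{-}inj$ are exactly the equivalences: one inclusion is the corollary that every member of $W\cap J\text{-}inj$ is an equivalence, and conversely an equivalence is a Morita-equivalence which, through an adjoint pseudo-inverse $G'$, lifts against an arbitrary square by the formula $G'H_2$, the coherence $2$-cells being supplied by the triangle identities; this simultaneously establishes the right lifting property of $W\cap Fib$ against all cofibrations. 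Of the two factorisations, $h\cong 1\circ h$ realises $h$ as $(W\cap Fib)\circ Cof$, while the factorisation theorem above realises it as $Fib\circ(W\cap Cof)$, since there $F\in J\text{-}cell\subseteq W$ and $G\in J\text{-}inj$.

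For the fibrant characterisation I would read the two lifting conditions off through the classifying property of syntactic categories (Theorem~\ref{bigthm}). A coherent functor $\mathcal{C}_R\to\mathcal{C}$ is precisely an equivalence relation in $\mathcal{C}$, and an extension along $M_{A/R}$ to $\mathcal{C}_{A/R}\to\mathcal{C}$ is precisely a surjection having that relation as kernel, i.e.~a quotient; as the codomain of the fibrancy square is $*$, the right lifting property against $M_{A/R}$ says exactly that every equivalence relation in $\mathcal{C}$ admits a quotient. Similarly a functor $\mathcal{C}_{A\cap B}\to\mathcal{C}$ is a span of monomorphisms $A\hookleftarrow S\hookrightarrow B$, and a lift along $M_{cov}$ produces an object jointly covered by $A$ and $B$ with intersection $S$; taking $S=0$, the strict initial object available since $\mathcal{C}$ is coherent, yields the disjoint binary coproduct $A\sqcup B$, and together with the empty coproduct one obtains all finite disjoint coproducts. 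Hence a fibrant $\mathcal{C}$ has quotients of equivalence relations and finite disjoint coproducts, so it is a pretopos; conversely in a pretopos both universal constructions exist, providing the requisite models and hence the lifts, so every pretopos is fibrant.

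For the homotopy relation, fix a path object $\mathcal{D}\xrightarrow{I_0}\mathcal{D}'\xrightarrow{\langle P_0,P_1\rangle}\mathcal{D}\times\mathcal{D}$. If $F\cong G$ then $K:=I_0F$ is a homotopy, since $P_0K\cong F$ and $P_1K\cong F\cong G$. For the converse I would invoke conceptual completeness (Theorem~\ref{conceptual}): the weak equivalence $I_0$ has the pretopos $\mathcal{D}$ as its domain, hence $I_0$ is an equivalence. From $\langle P_0,P_1\rangle I_0\cong\Delta$ we get $P_0I_0\cong 1_{\mathcal{D}}\cong P_1I_0$, and composing with a pseudo-inverse of $I_0$ forces $P_0\cong P_1$; thus any homotopy $K$ yields $F\cong P_0K\cong P_1K\cong G$.

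The hard part is the fibrant characterisation: translating the purely diagrammatic lifting conditions against $J$ into the pretopos axioms demands careful use of the classifying-category dictionary, above all the verification that each lift is an honest coherent functor realising the intended universal construction — an effective quotient and a disjoint coproduct — rather than merely some surjection or cover. The remaining work, namely the bookkeeping of the $2$- and $3$-cells in the lifting and retract arguments of the first paragraph, is routine in the $(2,1)$-setting.
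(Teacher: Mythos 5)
The first and third parts of your plan are sound, but the ``pretopos implies fibrant'' half of the second part has a genuine gap. Fibrancy demands a lift against $M_{cov}$ for \emph{every} coherent functor $\mathcal{C}_{A\cap B}\to\mathcal{P}$, i.e.\ for an arbitrary span of monomorphisms $A\hookleftarrow S\hookrightarrow B$ in $\mathcal{P}$, and the model of the extended theory you must produce is an amalgamation: an object $X$ with monic, jointly covering arrows $j_1:A\to X$, $j_2:B\to X$ whose intersection is exactly $S$. The two constructions your pretopos hypothesis hands you --- effective quotients and disjoint coproducts --- give this only for $S=0$. For general $S$ you must build $X=(A\sqcup B)/R_S$, where $R_S$ is the relation generated by the span, and then check: that $R_S$ is an equivalence relation (transitivity uses monicity of $i_1,i_2$); that $j_1,j_2$ are monic (this uses disjointness of the coproduct); and that $j_1(a)\approx j_2(b)\Leftrightarrow\exists s\,(a\approx i_1(s)\wedge b\approx i_2(s))$ holds, which requires the quotient to be effective, i.e.\ the kernel pair of the quotient map to be exactly $R_S$, not merely that a coequalizer exists. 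None of this appears in your proposal --- ``in a pretopos both universal constructions exist, providing the requisite models'' is precisely the step at issue --- and it is comparable in weight to everything you did write. The paper sidesteps all of it: it proves the stronger statement that \emph{any} map out of a pretopos is a fibration, by forming the 2-pushout of the generating trivial cofibration along the top of the lifting square (this pushout $F':\mathcal{P}\to\mathcal{Q}$ is a weak equivalence, since $J\subseteq W$ and $W$ is closed under 2-pushouts by Corollary \ref{wclosed}) and then applying Makkai's conceptual completeness to the weak equivalence $F'$ out of the pretopos; the quasi-inverse of $F'$ composed with $\mathcal{D}\to\mathcal{Q}$ is the lift.

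The rest diverges from the paper in a correct and interesting way. Your first paragraph matches the paper's verification of the axioms in content, and your ``fibrant implies pretopos'' reading of the lifts agrees with the paper's (both leave the same routine verification --- that disjoint covering subobjects form a coproduct --- to the reader). For the homotopy relation the paper uses the stronger fibration statement above to exhibit $(1_{\mathcal{D}},\Delta)$ as a path object and says nothing about homotopies through other path objects; you instead take an arbitrary path object and apply conceptual completeness (Theorem \ref{conceptual}) to $I_0$ to conclude $P_0\cong P_1$, which settles the ``homotopic implies isomorphic'' direction for every path object at once. That is a cleaner treatment of this point, and it shows conceptual completeness can be invoked at the homotopy stage rather than at the fibrancy stage --- but it does not excuse you from constructing the $M_{cov}$-lifts above, since your route has no substitute for the paper's pushout trick there.
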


\begin{proof}
$W$ satisfies the 2-out-3 and the retract axioms, the elements of $J$-$inj$ have the right lifting property wrt.~$W\cap Cof =W=J$-$cof$, equivalences (i.e.~elements of $W\cap Fib$) have the right lifting property wrt.~anything and the existence of the nontrivial factorisation system has already been noticed and it follows from the (2,1)-categorical small object argument.

$\mathcal{C}\to *$ is a fibration iff it reflects quotients by equivalence relations and pushouts of monomorphisms along monomorphisms which are also pullbacks, equivalently if these constructions exist in $\mathcal{C}$. Taking monomorphisms with initial domain this implies the existence of disjoint coproducts (which are pullback-stable by the stability of unions). To see the converse we will show that any map out of a pretopos is a fibration. I.e.~let
\[\begin{tikzcd}
	{\mathcal{C}} && {\mathcal{P}} \\
	& {\mathcal{Q}} \\
	{\mathcal{D}} && {\mathcal{E}}
	\arrow["F", from=1-1, to=3-1]
	\arrow[from=1-1, to=1-3]
	\arrow["{F'}"', from=1-3, to=2-2]
	\arrow[from=3-1, to=2-2]
	\arrow["G", from=1-3, to=3-3]
	\arrow[from=3-1, to=3-3]
	\arrow[dashed, from=2-2, to=3-3]
\end{tikzcd}\]
be a (homotopy) commutative square where $\mathcal{P}$ is a pretopos and $F$ is a weak equivalence, with $\mathcal{Q}$ being the 2-pushout. Then $F'$ is a weak equivalence, hence it is an equivalence as $\mathcal{P}$ is a pretopos (using Makkai's conceptual completeness: Theorem 7.1.8.~in \cite{makkai}). Its quasi-inverse gives a splitting of the original 2-cell.

It follows that when $\mathcal{D}$ is a pretopos the diagonal $\Delta :\mathcal{D}\to \mathcal{D}\times \mathcal{D}$ is a fibration, hence $(1_{\mathcal{D}},\Delta )$ is a path object for $\mathcal{D}$.

\end{proof}

\begin{remark}
The (2,1)-categorical analogue of Whitehead's theorem says that a map between objects that are both fibrant and cofibrant, is a weak equivalence iff it is a homotopy equivalence. Therefore the given model structure exhibits conceptual completeness (Theorem \ref{conceptual}, for coherent functors where the codomain is also a pretopos) as an instance of Whitehead's theorem. 

Fibrant replacement gives pretopos completion.
\end{remark}

\begin{remark}
It is not surprising that there is a set $J$ of generating trivial cofibrations for which pretoposes are precisely the fibrant objects. Indeed, the full subcategory of pretoposes is an accessibly embedded accessible subcategory of $\mathbf{Coh}_{\sim }$ which is closed under finite products, and $\mathbf{Coh}_{\sim }$ is locally presentable in the 2-dimensional sense. Then by a 2-dimensional version of Theorem 4.8.~of \cite{rosicky} pretoposes form a small-injectivity class.
\end{remark}

\begin{proposition}
Fully faithful functors are closed under 2-pullbacks in $\mathbf{Coh}_{\sim }$.
\label{fullf}
\end{proposition}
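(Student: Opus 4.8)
The plan is to unwind the definition of a 2-pullback in $\mathbf{Coh}_{\sim}$ and check fully faithfulness pointwise. Suppose we have a 2-pullback square
\[\begin{tikzcd}
	{\mathcal{P}} && {\mathcal{C}} \\
	\\
	{\mathcal{D}} && {\mathcal{E}}
	\arrow["{G'}", from=1-1, to=1-3]
	\arrow["{F'}"', from=1-1, to=3-1]
	\arrow["G"', from=3-1, to=3-3]
	\arrow["F", from=1-3, to=3-3]
\end{tikzcd}\]
with $F$ fully faithful, and we want to show that $F'$ is fully faithful. First I would recall the explicit description of the 2-pullback (the iso-comma object) in $\mathbf{Cat}_{\sim}$: an object of $\mathcal{P}$ is a triple $(d,c,\theta)$ where $d\in \mathcal{D}$, $c\in \mathcal{C}$ and $\theta : G(d)\xrightarrow{\cong} F(c)$ is an isomorphism in $\mathcal{E}$, and a morphism $(d,c,\theta)\to (d',c',\theta')$ is a pair of arrows $(u:d\to d', v:c\to c')$ making the evident square commute; the projection $F'$ sends $(d,c,\theta)\mapsto d$ and $(u,v)\mapsto u$. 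Since $\mathbf{Coh}_{\sim}$ sits inside $\mathbf{Cat}_{\sim}$ and the forgetful functor preserves 2-limits (the 2-pullback of coherent categories is computed at the level of underlying categories, being closed under the coherent structure), this description applies verbatim.

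With this in hand, fully faithfulness of $F'$ is a direct verification. For faithfulness: given two morphisms $(u,v),(u',v')$ in $\mathcal{P}$ with $F'(u,v)=F'(u',v')$, i.e.\ $u=u'$, I must show $v=v'$. The compatibility squares say $F(v)\circ \theta = \theta' \circ G(u)$ and $F(v')\circ \theta = \theta'\circ G(u')$; since $u=u'$ the right-hand sides agree, so $F(v)\circ \theta = F(v')\circ \theta$, and as $\theta$ is an isomorphism $F(v)=F(v')$, whence $v=v'$ by faithfulness of $F$. For fullness: given $(d,c,\theta),(d',c',\theta')\in \mathcal{P}$ and an arrow $u:d\to d'$ in $\mathcal{D}$, I need to produce a $v:c\to c'$ in $\mathcal{C}$ with $(u,v)$ a morphism of $\mathcal{P}$, i.e.\ with $F(v)\circ\theta = \theta'\circ G(u)$. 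The composite $\theta'\circ G(u)\circ\theta^{-1}: F(c)\to F(c')$ is an arrow between objects in the image of $F$, so by fullness of $F$ it equals $F(v)$ for some $v:c\to c'$; this $v$ is exactly the required lift, and the pair $(u,v)$ defines the desired morphism in $\mathcal{P}$.

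I do not expect a genuine obstacle here, as the argument is the standard observation that iso-comma objects inherit fully faithfulness of a projected leg from the opposite leg. The only point requiring a word of care is the justification that the 2-pullback in $\mathbf{Coh}_{\sim}$ is computed as the underlying iso-comma category --- that is, that the coherent structure (finite limits, images, finite unions) on the iso-comma category is the one making the projections coherent and exhibiting it as the 2-limit in $\mathbf{Coh}_{\sim}$. This follows from the fact, used already in the proof of Theorem \ref{mainthm}, that limits in $\mathbf{Coh}_{\sim}$ are created at the level of underlying categories (they are flexible/weighted limits of the accessible category of coherent categories), so no separate check of the universal property against the naive category-theoretic construction is needed.
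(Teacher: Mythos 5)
Your proof is correct, but it takes a different route from the paper on the key step. Both arguments share the same reduction: the 2-pullback in $\mathbf{Coh}_{\sim}$ is computed at the level of underlying categories, because the forgetful functor $\mathbf{U}:\mathbf{Coh}_{\sim}\to\mathbf{Cat}_{\sim}$ preserves (and reflects) 2-limits --- the paper cites \cite{flexible} for this, and you invoke essentially the same fact. Where you diverge is in how stability of fully faithful functors under 2-pullback in $\mathbf{Cat}_{\sim}$ is established: the paper argues abstractly, citing \cite{codescent} for the fact that (bijective-on-objects, fully faithful) is a 2-categorical orthogonal factorisation system on $\mathbf{Cat}_{\sim}$, and then uses the general principle that the right class of an orthogonal factorisation system is stable under (2-)pullbacks; you instead take the explicit iso-comma model of the 2-pullback and verify faithfulness and fullness of the projection by a direct diagram chase. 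Your computation is sound (the faithfulness step uses that $\theta$ is invertible plus faithfulness of $F$, and the fullness step lifts $\theta'\circ G(u)\circ\theta^{-1}$ through $F$), and it has the virtue of being self-contained and of showing concretely why the property transfers; the paper's argument is shorter and needs no model of the 2-pullback, at the cost of an external reference. One small point you should make explicit: a 2-pullback is only determined up to equivalence, so checking the property on the iso-comma model suffices only because fully faithfulness is invariant under composition with equivalences and under natural isomorphism of functors --- your phrase that the description ``applies verbatim'' glosses over this, though the gap is entirely routine to fill.
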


\begin{proof}
By \cite{codescent} bijective-on-objects and fully faithful functors form a 2-categorical orthogonal factorisation system on $\mathbf{Cat}_{\sim }$, hence the right class is stable under 2-pullbacks. By \cite{flexible} the forgetful functor $\mathbf{U}:\mathbf{Coh}_{\sim }\to \mathbf{Cat}_{\sim }$ preserves and reflects 2-limits.
\end{proof}

\begin{proposition}
$F:\mathcal{C}\to \mathcal{D}$ is a fibration iff 
\[\begin{tikzcd}
	{\mathcal{C}} && {\mathbf{R}\mathcal{C}} \\
	\\
	{\mathcal{D}} && {\mathbf{R}\mathcal{D}}
	\arrow["F"', from=1-1, to=3-1]
	\arrow["{\rho _{\mathcal{C}}}", from=1-1, to=1-3]
	\arrow["\mathbf{R}F", from=1-3, to=3-3]
	\arrow["{\rho _{\mathcal{D}}}"', from=3-1, to=3-3]
\end{tikzcd}\]
is a 2-pullback.
\end{proposition}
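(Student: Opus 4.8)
For the backward implication I would argue directly. If the square is a 2-pullback then $F$ is the 2-pullback of $\mathbf{R}F$ along $\rho_{\mathcal{D}}$. Since $\mathbf{R}\mathcal{C}$ is a pretopos, the proof of Theorem \ref{mainthm} shows that every coherent functor out of it is a fibration; in particular $\mathbf{R}F$ is a fibration, and as fibrations are closed under 2-pullbacks (the proposition stating ``fibrations are closed under 2-pullbacks''), $F$ is a fibration. This direction needs none of the reflection theory beyond fibrancy of $\mathbf{R}\mathcal{C}$.

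The forward implication is where the work lies. Assume $F$ is a fibration. Form the 2-pullback $P=\mathcal{D}\times_{\mathbf{R}\mathcal{D}}\mathbf{R}\mathcal{C}$ with projections $\pi_{\mathcal{D}}:P\to\mathcal{D}$ and $\pi_{\mathbf{R}\mathcal{C}}:P\to\mathbf{R}\mathcal{C}$, and let $u:\mathcal{C}\to P$ be the comparison functor induced by $F$, $\rho_{\mathcal{C}}$ and the naturality isomorphism $\mathbf{R}F\circ\rho_{\mathcal{C}}\cong\rho_{\mathcal{D}}\circ F$, so that $\pi_{\mathcal{D}}u\cong F$ and $\pi_{\mathbf{R}\mathcal{C}}u\cong\rho_{\mathcal{C}}$. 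Proving that the square is a 2-pullback amounts to proving that $u$ is an equivalence. First, $\pi_{\mathcal{D}}$ is the 2-pullback of the fibration $\mathbf{R}F$ along $\rho_{\mathcal{D}}$, hence itself a fibration; since $F$ and $\pi_{\mathcal{D}}$ are fibrations with $\pi_{\mathcal{D}}u\cong F$, Proposition \ref{cancellation} gives that $u$ is a fibration. By the corollary that every map in $W\cap J$-$inj$ is an equivalence, it remains only to show $u\in W$, and since $\pi_{\mathbf{R}\mathcal{C}}u\cong\rho_{\mathcal{C}}\in W$, the 2-out-of-3 property reduces this to proving that $\pi_{\mathbf{R}\mathcal{C}}\in W$.

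To see that $\pi_{\mathbf{R}\mathcal{C}}$ is a weak equivalence I would use the characterization of weak equivalences from \cite{makkai} recalled in the proof of $J$-$cof=W$: a coherent functor lies in $W$ iff it is fully faithful, conservative, full with respect to subobjects and every object of its codomain is finitely covered by objects of its domain. Because $\rho_{\mathcal{D}}$ is fully faithful, the (2,1)-pullback $P$ is equivalent to the full subcategory of $\mathbf{R}\mathcal{C}$ on those $c$ with $\mathbf{R}F(c)$ in the essential image of $\rho_{\mathcal{D}}$, with $\pi_{\mathbf{R}\mathcal{C}}$ the inclusion; full faithfulness and conservativity are then immediate, and fullness on subobjects holds because any subobject of such a $c$ maps under $\mathbf{R}F$ to a subobject of $\rho_{\mathcal{D}}(d)$, which lies in the essential image of $\rho_{\mathcal{D}}$ since $\rho_{\mathcal{D}}$ is full on subobjects.

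The finite-covering condition is the main obstacle, but it is settled cleanly using naturality rather than any pullback-stability of weak equivalences. Given $c\in\mathbf{R}\mathcal{C}$, the fact that $\rho_{\mathcal{C}}\in W$ lets me cover $c$ by finitely many objects of the form $\rho_{\mathcal{C}}(x_k)$ with $x_k\in\mathcal{C}$; and each such object lifts to $P$, because $\mathbf{R}F(\rho_{\mathcal{C}}(x_k))\cong\rho_{\mathcal{D}}(F(x_k))$ lies in the essential image of $\rho_{\mathcal{D}}$. Thus every $c$ is finitely covered by objects in the image of $\pi_{\mathbf{R}\mathcal{C}}$, completing the four conditions, so $\pi_{\mathbf{R}\mathcal{C}}\in W$; hence $u\in W\cap J$-$inj$ is an equivalence. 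The crux is therefore to recognise that the covering pieces can always be routed through the essential image of $\rho_{\mathcal{C}}$, which is exactly where the compatibility of the units $\rho_{\mathcal{C}},\rho_{\mathcal{D}}$ with $\mathbf{R}F$ is used.
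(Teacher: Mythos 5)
Your proposal is correct, and it follows the paper's skeleton up to the halfway point: the backward direction is the same (a map out of the pretopos $\mathbf{R}\mathcal{C}$ is a fibration, and fibrations are stable under 2-pullback), and in the forward direction both you and the paper form the 2-pullback, observe that its projection to $\mathcal{D}$ is a fibration, and apply Proposition \ref{cancellation} to conclude that the comparison functor is a fibration. From there the arguments genuinely diverge. The paper proves directly that the comparison functor is an equivalence: full faithfulness via the (bijective-on-objects, fully faithful) factorisation system (Proposition \ref{fullf} plus cancellation in the right class), and essential surjectivity by an explicit construction --- covering $Ux$ by $\rho_{\mathcal{C}}$-images and then using the right lifting property of the comparison against $M_{A/R}$ and $M_{cov}$ to manufacture the needed quotients and unions inside its image; this is precisely where the explicit generators $J$ earn their keep. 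You instead show the projection $\pi_{\mathbf{R}\mathcal{C}}$ is a weak equivalence by checking the Makkai--Reyes conditions on the full-subcategory description of the pullback (your verification is sound: the subcategory is closed under subobjects by sub-fullness of $\rho_{\mathcal{D}}$, and the covering subobjects $b_k\hookrightarrow \rho_{\mathcal{C}}(x_k)$ lie in $P$ by that same closure), then finish with 2-out-of-3 and the corollary that $W\cap J$-$inj$ consists of equivalences; this is shorter and never touches the generators. The one point to be careful about: you invoke the characterisation of weak equivalences as an ``iff'', but the paper only ever recalls the necessity direction (weak equivalence $\Rightarrow$ fully faithful, conservative, sub-full, covering). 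The sufficiency you need is true --- it is a theorem of \cite{makkai}, and can alternatively be re-derived by showing the four conditions force the induced functor on pretopos completions to be an equivalence --- but it is a strictly stronger statement than anything quoted in the paper, so you should cite it explicitly rather than present it as something already ``recalled in the proof of $J$-$cof=W$''.
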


\begin{proof}
Since any map out of a pretopos is a fibration the "if" direction is immediate. For the converse assume that $F$ is a fibration and take the pullback
\[\begin{tikzcd}
	{\mathcal{C}} \\
	& S && {\mathbf{R}\mathcal{C}} \\
	\\
	& {\mathcal{D}} && {\mathbf{R}\mathcal{D}}
	\arrow["F"', curve={height=12pt}, from=1-1, to=4-2]
	\arrow["{\rho _{\mathcal{C}}}", curve={height=-12pt}, from=1-1, to=2-4]
	\arrow["\mathbf{R}F", from=2-4, to=4-4]
	\arrow["{\rho _{\mathcal{D}}}"', from=4-2, to=4-4]
	\arrow["{F'}"', from=2-2, to=4-2]
	\arrow["U"', from=2-2, to=2-4]
	\arrow["V"', dashed, from=1-1, to=2-2]
\end{tikzcd}\]

We should prove that $V$ is an equivalence. As both $F$ and $F'$ are fibrations $V$ is a fibration by Proposition \ref{cancellation}. Hence it suffices to prove that $V$ is a weak equivalence.

By \cite{makkai} any weak equivalence is fully faithful. By Proposition \ref{fullf} $U$ is fully faithful and since fully faithful functors form the right class of a factorisation system on $\mathbf{Cat}_1$ we get that so is $V$. Then it suffices to prove essential surjectivity.

Take $x\in S$. Referring to \cite{makkai} again $Ux$ is finitely covered by $\mathcal{C}$ via $\rho _{\mathcal{C}}$, i.e.~there are objects $c_1,\dots c_n\in \mathcal{C}$ and maps $f_i:\rho _{\mathcal{C}}(c_i)\to Ux$ such that $\bigvee _i \exists _{f_i}\rho _{\mathcal{C}}c_i =Ux$. Using that $U$ is fully faithful we get maps $g_i:Vc_i\to x$ with the same property.

As $\rho _{\mathcal{C}}$ is full wrt.~subobjects the composite $Sub(c)\hookrightarrow Sub(Vc)\hookrightarrow Sub(UVc)$ is surjective, therefore both maps are bijections, in particular $V$ is full wrt.~subobjects. Taking $k_i\hookrightarrow Vc_i\times Vc_i $ to be the kernel pair of $g_i$ we have that it is coming from an equivalence relation on $c_i\times c_i$. As $V$ has the right lifting property against $M_{A/R}$ we can choose $g_i$'s to be monomorphisms. Then orthogonality against $M_{cov}$ completes the proof.

\end{proof}

\begin{corollary}
The model structure of Theorem \ref{mainthm} is right proper.
\end{corollary}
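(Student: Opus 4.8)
The plan is to verify the definition of right properness directly: weak equivalences are stable under 2-pullback along fibrations. So fix a weak equivalence $w\colon\mathcal{A}\to\mathcal{B}$ and a fibration $p\colon\mathcal{E}\to\mathcal{B}$, form the 2-pullback $\mathcal{P}=\mathcal{A}\times_{\mathcal{B}}\mathcal{E}$ with projections $q\colon\mathcal{P}\to\mathcal{E}$ (the pullback of $w$) and $v\colon\mathcal{P}\to\mathcal{A}$ (the pullback of $p$); the goal is $q\in W$. Since fibrations are closed under 2-pullbacks, $v$ is again a fibration, and by the reformulation of conceptual completeness above it suffices to show that $\mathbf{R}q$ is an equivalence.

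I would assemble the cube of $\rho$-naturality squares joining the front pullback square to its image under $\mathbf{R}$. Applying $\mathbf{R}$ to the pullback square gives the (a priori only commuting) back square
\[\begin{tikzcd}
	{\mathbf{R}\mathcal{P}} && {\mathbf{R}\mathcal{E}} \\
	\\
	{\mathbf{R}\mathcal{A}} && {\mathbf{R}\mathcal{B}}
	\arrow["{\mathbf{R}q}", from=1-1, to=1-3]
	\arrow["{\mathbf{R}v}"', from=1-1, to=3-1]
	\arrow["{\mathbf{R}p}", from=1-3, to=3-3]
	\arrow["{\mathbf{R}w}"', from=3-1, to=3-3]
\end{tikzcd}\]
and the two squares are connected by the components of the unit $\rho$. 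By the characterisation of fibrations from the previous proposition, the naturality squares at the fibrations $p$ and $v$ --- i.e.~the right and left faces of the cube --- are 2-pullbacks.

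The heart of the argument is a two-fold application of the 2-categorical pasting lemma for 2-pullbacks. First I would paste the front square (a 2-pullback by construction) onto the right face (a 2-pullback, since $p$ is a fibration) along their common edge $p$, concluding that the outer rectangle
\[\begin{tikzcd}
	{\mathcal{P}} && {\mathbf{R}\mathcal{E}} \\
	\\
	{\mathcal{A}} && {\mathbf{R}\mathcal{B}}
	\arrow["{\rho_{\mathcal{E}}q}", from=1-1, to=1-3]
	\arrow["v"', from=1-1, to=3-1]
	\arrow["{\mathbf{R}p}", from=1-3, to=3-3]
	\arrow["{\rho_{\mathcal{B}}w}"', from=3-1, to=3-3]
\end{tikzcd}\]
is a 2-pullback. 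By naturality of $\rho$ the top and bottom of this rectangle also equal $\mathbf{R}q\circ\rho_{\mathcal{P}}$ and $\mathbf{R}w\circ\rho_{\mathcal{A}}$, so the same rectangle is the horizontal composite of the left face followed by the back square. Since the left face is a 2-pullback (as $v$ is a fibration) and the composite rectangle is a 2-pullback, the pasting lemma forces the back square to be a 2-pullback as well; that is, $\mathbf{R}\mathcal{P}\cong\mathbf{R}\mathcal{A}\times_{\mathbf{R}\mathcal{B}}\mathbf{R}\mathcal{E}$, with $\mathbf{R}q$ exhibited as the pullback of $\mathbf{R}w$ along $\mathbf{R}p$.

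To finish I would use that $w\in W$ means $\mathbf{R}w$ is an equivalence, and that the 2-pullback of an equivalence is again an equivalence; hence $\mathbf{R}q$ is an equivalence and $q$ is a weak equivalence, which is exactly right properness. The step demanding care is the cube bookkeeping: one must check that the two decompositions of the outer rectangle agree as pseudo-commutative squares and that their coherence $2$-cells match, so that the 2-categorical pasting lemma genuinely applies. Everything else is a formal consequence of the fibration characterisation together with the fact that $\mathbf{R}$ inverts weak equivalences.
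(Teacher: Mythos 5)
Your reduction to showing that $\mathbf{R}q$ is an equivalence, and your first pasting step, are both correct, and up to that point you are following the paper: the defining 2-pullback of $\mathcal{P}$ pasted with the $\rho$-naturality square of the fibration $p$ (a 2-pullback by the proposition characterising fibrations) shows that the rectangle with vertical edges $v$ and $\mathbf{R}p$ and horizontal edges $\rho_{\mathcal{E}}q$ and $\rho_{\mathcal{B}}w$ is a 2-pullback. The gap is your second application of the pasting lemma. In the decomposition of this rectangle as (left face of the cube) followed by (back square), the back square is the one adjacent to the cospan $\mathbf{R}\mathcal{A}\xrightarrow{\mathbf{R}w}\mathbf{R}\mathcal{B}\xleftarrow{\mathbf{R}p}\mathbf{R}\mathcal{E}$, and the cancellation half of the pasting lemma runs only one way: if the cospan-adjacent square is a pullback, then the outer rectangle is a pullback iff the other square is. From ``outer rectangle and left square are pullbacks'' one cannot conclude that the right (cospan-adjacent) square is a pullback. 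This fails already for ordinary pullbacks in $\mathbf{Set}$: take top row $\emptyset\to\{0\}\hookrightarrow\{0,1\}$, bottom row $\emptyset\to\{*\}\to\{*\}$, with the unique vertical maps; the left square and the outer rectangle are pullbacks, but the right square is not. Note moreover that the intermediate claim you are after --- that $\mathbf{R}$ carries the defining 2-pullback to a 2-pullback --- is essentially the paper's \emph{last} corollary, which is there deduced \emph{from} right properness; $\mathbf{R}$ is a left 2-adjoint with no exactness properties that would give this independently, so any route through that claim is circular.

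The paper never applies $\mathbf{R}$ to the square and instead finishes downstairs. Since $w\in W$, the functor $\mathbf{R}w$ is an equivalence and $\rho_{\mathcal{B}}w\cong\mathbf{R}w\circ\rho_{\mathcal{A}}$; hence, composing the cospan with $(\mathbf{R}w)^{-1}$, the outer rectangle exhibits $\rho_{\mathcal{E}}q$ as the 2-pullback of the unit $\rho_{\mathcal{A}}$ along the functor $(\mathbf{R}w)^{-1}\circ\mathbf{R}p\colon\mathbf{R}\mathcal{E}\to\mathbf{R}\mathcal{A}$ between pretoposes. The proposition preceding the corollary is invoked exactly for this situation: its proof shows that such a pullback projection is a weak equivalence (full faithfulness via Proposition \ref{fullf} and cancellation in the bo/ff factorisation system, the remaining covering conditions via the finite-covering and $J$-orthogonality argument). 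Thus $\rho_{\mathcal{E}}q\in W$, and since $\rho_{\mathcal{E}}\in W$, the 2-out-of-3 property gives $q\in W$. So the repair is to replace your second pasting step by this identification of the bottom edge of the outer rectangle with the unit and an appeal to that proposition, rather than attempting to transport the pullback through $\mathbf{R}$.
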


\begin{proof}
By the pasting law of pullbacks and by the previous proposition $\rho _{\mathcal{B}} U'$ is a weak equivalence.

\[\begin{tikzcd}
	{\mathcal{S}} && {\mathcal{B}} && {\mathbf{R}\mathcal{B}} & {\mathbf{R}\mathcal{S}} \\
	& pb && pb \\
	{\mathcal{A}} && {\mathcal{C}} && {\mathbf{R}\mathcal{C}} & {\mathbf{R}\mathcal{A}}
	\arrow["U"', from=3-1, to=3-3]
	\arrow["{F'}"', from=1-1, to=3-1]
	\arrow["{U'}", from=1-1, to=1-3]
	\arrow["F"', from=1-3, to=3-3]
	\arrow["{\rho _{\mathcal{B}}}", from=1-3, to=1-5]
	\arrow["{\rho _{\mathcal{C}}}"', from=3-3, to=3-5]
	\arrow["{\mathbf{R}F}", from=1-5, to=3-5]
	\arrow["{\mathbf{R}F'}", from=1-6, to=3-6]
	\arrow["\simeq", from=1-5, to=1-6]
	\arrow["\simeq"', from=3-5, to=3-6]
\end{tikzcd}\]
\end{proof}

\begin{corollary}
Any square 
\[\begin{tikzcd}
	{\mathcal{S}} && {\mathcal{B}} \\
	& \cong \\
	{\mathcal{A}} && {\mathcal{C}}
	\arrow["U"', from=3-1, to=3-3]
	\arrow["{U'}", from=1-1, to=1-3]
	\arrow["F", from=1-3, to=3-3]
	\arrow["{F'}"', from=1-1, to=3-1]
\end{tikzcd}\]
with $U,U'$ being weak equivalences and $F,F'$ being fibrations is a 2-pullback.
\end{corollary}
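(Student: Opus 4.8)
The plan is to realise the given square as the front face of a commutative cube whose back face is its image under the reflector $\mathbf{R}$, the four connecting edges being the unit maps $\rho _{\mathcal{S}},\rho _{\mathcal{A}},\rho _{\mathcal{B}},\rho _{\mathcal{C}}$. Since $\mathbf{R}$ is a $2$-functor, applying it to the (pseudo)commuting front square produces a (pseudo)commuting back square; the top and bottom faces are the naturality squares of $\rho $ for $U'$ and $U$ (so they are filled by the canonical isomorphisms $\mathbf{R}U'\circ \rho _{\mathcal{S}}\cong \rho _{\mathcal{B}}\circ U'$ and $\mathbf{R}U\circ \rho _{\mathcal{A}}\cong \rho _{\mathcal{C}}\circ U$), while the left and right faces are precisely the squares of the previous proposition for the fibrations $F'$ and $F$. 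In particular the left and right faces are $2$-pullbacks immediately, because $F'$ and $F$ are fibrations.

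Next I would check that the back face
\[\begin{tikzcd}
	{\mathbf{R}\mathcal{S}} && {\mathbf{R}\mathcal{B}} \\
	\\
	{\mathbf{R}\mathcal{A}} && {\mathbf{R}\mathcal{C}}
	\arrow["{\mathbf{R}U'}", from=1-1, to=1-3]
	\arrow["{\mathbf{R}F'}"', from=1-1, to=3-1]
	\arrow["{\mathbf{R}F}", from=1-3, to=3-3]
	\arrow["{\mathbf{R}U}"', from=3-1, to=3-3]
\end{tikzcd}\]
is a $2$-pullback. Indeed, since $U$ and $U'$ are weak equivalences, $\mathbf{R}U$ and $\mathbf{R}U'$ are equivalences (a coherent functor is a weak equivalence iff its reflection is an equivalence), and a commutative square both of whose horizontal edges are equivalences is always a $2$-pullback: the $2$-pullback of an equivalence along any map is an equivalence, so the comparison map of $\mathbf{R}\mathcal{S}$ into the pullback of $\mathbf{R}U$ and $\mathbf{R}F$ is an equivalence by the $2$-out-of-$3$ property.

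The conclusion then follows from two applications of the pasting law for $2$-pullbacks. Pasting the left face with the back face shows that the rectangle from $\mathcal{S}$ to $\mathbf{R}\mathcal{B}$ over $\mathcal{A}\to \mathbf{R}\mathcal{C}$, with horizontal composites $\mathbf{R}U'\circ \rho _{\mathcal{S}}$ and $\mathbf{R}U\circ \rho _{\mathcal{A}}$ and vertical edges $F'$ and $\mathbf{R}F$, is a $2$-pullback. By the naturality isomorphisms supplied by the top and bottom faces this is the same rectangle as the one obtained by pasting the front face with the right face, whose horizontal composites are $\rho _{\mathcal{B}}\circ U'$ and $\rho _{\mathcal{C}}\circ U$. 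Since the right face is a $2$-pullback, the pasting law now forces the front face — the given square — to be a $2$-pullback.

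The main obstacle is bookkeeping rather than conceptual: one must verify that the isomorphism filling each face of the cube is the canonical one and that the two decompositions of the outer rectangle are filled by the \emph{same} composite $2$-cell, so that the $2$-categorical pasting law genuinely applies. Once the cube is confirmed to commute coherently, the argument is routine.
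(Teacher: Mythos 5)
Your proof is correct, but it follows a genuinely different route from the paper's. The paper forms the honest 2-pullback $\mathcal{S}'$ of $F$ and $U$ and studies the comparison map $V:\mathcal{S}\to \mathcal{S}'$: right properness (the immediately preceding corollary) makes the pulled-back copy of $U$ a weak equivalence, so $V\in W$ by 2-for-3; stability of fibrations under 2-pullback together with the cancellation property of fibrations (Proposition \ref{cancellation}) makes $V$ a fibration; and the corollary that maps in $W\cap J$-$inj$ are equivalences finishes. You instead build the cube over the reflector $\mathbf{R}$, identify the left and right faces as the 2-pullback squares characterizing the fibrations $F'$ and $F$, observe that the back face is a 2-pullback because $\mathbf{R}U$ and $\mathbf{R}U'$ are equivalences (using that a coherent functor is a weak equivalence iff its reflection is an equivalence, and that equivalences are stable under 2-pullback), and conclude by the two-sided pasting law. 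Your argument has the virtue of bypassing both right properness and the cancellation property, reducing the statement to a formal consequence of the characterization of fibrations as $\mathbf{R}$-naturality 2-pullbacks; the cost is exactly the coherence bookkeeping you flag, namely verifying that the six pseudo-naturality and image 2-cells assemble into a commuting cube so that the two decompositions of the outer rectangle agree. The paper's argument is shorter and stays entirely inside model-category-style reasoning (2-for-3, closure properties, trivial fibrations are equivalences), at the price of invoking those previously established results as black boxes.
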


\begin{proof}
Let $V:\mathcal{S}\to \mathcal{S}'$ be the connecting map to the 2-pullback of $F$ and $U$. Then by the cancellation property of fibrations and by the 2-for-3 property of weak equivalences we get $V\in W\cap J$-$inj$ hence it is an equivalence. 
\end{proof}

\printbibliography
\end{document}